\newtheorem{thm2}{Theorem}
\newtheorem{cor2}{Corollary}
\newtheorem{conj2}{Conjecture}
\newtheorem{ques2}{Question}
\newtheorem{thm}{Theorem}[section]
\newtheorem{lem}[thm]{Lemma}
\newtheorem{prop}[thm]{Proposition}
\newtheorem{conj}[thm]{Conjecture}
\theoremstyle{definition}
\newtheorem{defn}[thm]{Definition}
\newtheorem{ex}[thm]{Example}
\newtheorem{rmk}[thm]{Remark}
\newtheorem{ques}[thm]{Question}
 \DeclareMathOperator{\Spec}{Spec}
\DeclareMathOperator{\End}{End} 
\DeclareMathOperator{\Hom}{Hom}
\DeclareMathOperator{\Hilb}{Hilb}
 \DeclareMathOperator{\rk}{rk}
 \DeclareMathOperator{\pr}{pr} 
\DeclareMathOperator{\Pic}{Pic}
\newcommand{\C}{\ensuremath\mathds{C}}
\newcommand{\Z}{\ensuremath\mathds{Z}}
\newcommand{\Q}{\ensuremath\mathds{Q}}
\newcommand{\h}{\ensuremath\mathfrak{h}}
\newcommand{\D}{\ensuremath\mathrm{D}}
\newcommand{\Hdg}{\ensuremath\mathrm{Hdg}}
\newcommand{\HH}{\ensuremath\mathrm{H}}
\newcommand{\HHH}{\ensuremath\mathrm{HH}}
\newcommand{\CH}{\ensuremath\mathrm{CH}}
\newcommand{\id}{\ensuremath\mathrm{id}}
\newcommand{\pt}{\ensuremath\mathrm{pt}}
\newcommand{\tr}{\ensuremath\mathrm{tr}}
\newif\ifHideFoot
\newcommand{\Lie}[1]{}
\newcommand{\Charles}[1]{}
\newcommand{\marg}[1]{\normalsize{{
			\color{red}\footnote{{\color{blue}#1}}}{\marginpar[\vskip
			-.25cm{\color{red}\hfill$\Rightarrow$\tiny\thefootnote}]{\vskip
				-.2cm{\color{red}$\Leftarrow$\tiny\thefootnote}}}}}
\newcommand{\Lie}[1]{\marg{(Lie) #1}}
\newcommand{\Charles}[1]{\marg{(Charles) #1}}
\begin{document}

	\title[A motivic global Torelli theorem	
	for isogenous K3 surfaces]{A motivic global
		Torelli theorem \linebreak 
		for isogenous K3 surfaces}
	
	\author{Lie Fu}
	\address{Institute for Mathematics, Astrophysics and Particle Physics, Radboud University, Nijmegen, Netherlands}
	\address{Institut Camille Jordan, Universit\'e Claude Bernard Lyon 1, France}
	\email{fu@math.univ-lyon1.fr}
	
	\author{Charles Vial}
	\address{Fakult\"at f\"ur Mathematik, Universit\"at Bielefeld, Germany} 
	\email{vial@math.uni-bielefeld.de}
	\thanks{2020 {\em Mathematics Subject Classification:} 14C25, 14C34, 14C15,
		14F08, 	14J28, 14J42, 14J32.}
	
	\thanks{{\em Key words and phrases.}  
		Motives, K3 surfaces, Torelli theorems, derived categories, cohomology ring.}
	
	
	\begin{abstract} 
		We prove that the Chow motives of twisted derived equivalent K3 surfaces are
		isomorphic, not only as Chow motives (due to Huybrechts), but also as
		Frobenius algebra
		objects. Combined with a recent result of Huybrechts, we conclude that two
		complex projective K3
		surfaces are isogenous (i.e.\!~ their second rational cohomology groups are
		Hodge isometric)
		if and only if their Chow motives are isomorphic as Frobenius algebra
		objects\,; this can be regarded as a motivic Torelli-type theorem. 
		We ask whether, more generally, twisted derived equivalent hyper-K\"ahler
		varieties have isomorphic Chow motives as (Frobenius) algebra objects and in
		particular
		isomorphic graded rational cohomology algebras. In the appendix, we justify
		introducing the notion of ``Frobenius algebra object'' by showing the
		existence
		of an infinite family of K3 surfaces whose Chow motives are  pairwise
		non-isomorphic as Frobenius algebra objects but isomorphic as algebra objects.
		In particular, K3 surfaces in that family are pairwise non-isogenous but have
		isomorphic rational Hodge algebras. 
	\end{abstract}
	
	\maketitle

	\section*{Introduction}
	
	\subsection*{Torelli theorems for K3 surfaces}
	A compact complex surface is called a K3 surface if it is simply connected and
	has trivial canonical bundle. The Hodge structure carried by the second
	singular
	cohomology contains essential information of the surface. Indeed, the global
	Torelli theorem says that the isomorphism class of a K3 surface $S$ is
	determined by the Hodge structure $\HH^{2}(S, \Z)$ together with the
	intersection pairing on it \cite{PSS, BR}.
	
	The following more flexible notion due to Mukai \cite{Mukai} turns out to be
	crucial in the study of their derived categories\,: two complex projective K3
	surfaces $S$ and $S'$ are called \emph{isogenous} if
	there exists a \emph{Hodge isometry}
	$\varphi\colon \HH^2(S,\Q)\stackrel{\sim}{\longrightarrow} \HH^2(S',\Q)$,
	\emph{i.e.}\ an isomorphism of rational Hodge structures compatible with the
	intersection pairing on both sides.
	Recently, Buskin \cite{buskin} proved that such an isometry $\varphi$ is
	induced by an algebraic correspondence, as had previously been conjectured by
	Shafarevich \cite{Shaf} as a particularly interesting case of the Hodge
	conjecture. Let us call any such representative a \emph{Shafarevich cycle} for
	this isogeny. Shortly afterwards, Huybrechts
	\cite{huybrechts-isogenous} gave another proof and showed that in fact
	$\varphi$ is induced by a chain
	of exact linear equivalences between derived categories of twisted K3
	surfaces, thereby establishing the \emph{twisted derived
		global Torelli theorem} \cite[Corollary~1.4]{huybrechts-isogenous}\,:
	\begin{equation}\label{eq:TwDerTorelli}
	\text{$S$ and $S'$ are isogenous} \Longleftrightarrow \text{$S$ and $S'$ are
		twisted derived equivalent.}
	\end{equation}	
	Here, following Huybrechts \cite{huybrechts-isogenous}, we say that two K3
	surfaces $S$ and $S'$ are \emph{twisted derived equivalent} if there exist K3
	surfaces
	$S=S_0, S_1,\ldots, S_n=S'$ and Brauer classes $\alpha = \beta_0  \in
	\mathrm{Br}(S), \alpha_1 ,\beta_1 \in \mathrm{Br}(S_1),\ldots, \alpha_{n-1},
	\beta_{n-1} \in \mathrm{Br}(S_{n-1})$ and $\alpha' = \alpha_n \in
	\mathrm{Br}(S')$ and exact linear equivalences
	between bounded derived categories of twisted coherent sheaves
	\begin{equation}\label{eq:tde}
	\begin{array}{rlll}
	\mathrm{D}^b(S,\alpha)\simeq&\!\!\!\mathrm{D}^b(S_1,\alpha_1),&&\\
	&\!\!\!\mathrm{D}^b(S_1,\beta_1)\simeq&\!\!\!\mathrm{D}^b(S_2,\alpha_2) ,&\\
	&&\,\,\,\vdots&\\

	&&\!\!\!\mathrm{D}^b(S_{n-2},\beta_{n-2})\simeq&\!\!\!\mathrm{D}^b(S_{n-1},\alpha_{n-1}),\\
	&&&\!\!\!\mathrm{D}^b(S_{n-1},\beta_{n-1})\simeq\mathrm{D}^b(S',\alpha').
	\end{array}
	\end{equation}
	Note that by \cite{cs} any exact linear equivalence between bounded derived
	categories of twisted coherent sheaves on smooth projective varieties is of
	Fourier--Mukai type, so that in \eqref{eq:tde}, each equivalence
	$\mathrm{D}^b(S_i,\beta_i) \stackrel{\sim}{\longrightarrow}
	\mathrm{D}^b(S_{i+1},\alpha_{i+1})$ is induced by a Fourier--Mukai kernel
	$\mathcal{E}_i \in  \mathrm{D}^b(S_i\times S_{i+1},\beta_i^{-1} \boxtimes
	\alpha_{i+1})$ (unique up to isomorphism).
	\medskip
	
	Combined with his
	previous work \cite{huybrechts-derivedeq} generalized to the twisted case,
	Huybrechts  deduced that isogenous complex projective K3
	surfaces have isomorphic Chow motives\footnote{In this paper, Chow groups and
		motives are always with rational coefficients, except in Theorem
		\ref{thm:motglobtor}.}.
	However, the converse does not hold in general\,: there are K3 surfaces having
	isomorphic Chow motives (hence isomorphic rational Hodge structures) without
	being
	isogenous. Examples of such K3 surfaces were constructed geometrically in
	\cite{BSV}\,; see Remark~\ref{R:BSV} of Appendix~\ref{sect:Appendix}. We
	provide  in Theorem~\ref{thm:IsomChowMot}  two further constructions of
	infinite families of pairwise non-isogenous K3 surfaces with isomorphic Chow
	motives. 
	The motivation of the paper is to complete the picture by giving
	a motivic characterization of isogenous K3 surfaces (Corollary
	\ref{cor:torelli}). Our main result is the following\,:

	\begin{thm2}\label{thm:main}
		Let $S$ and $S'$ be two twisted derived equivalent K3 surfaces over a field
		$k$. 
		Then the Chow motives of $S$ and $S'$ are isomorphic as algebra objects, in
		fact even as Frobenius algebra objects (Definition \ref{def:FrobAlg}), in the
		category of rational Chow motives over $k$.
	\end{thm2}

	In concrete terms, there exists a correspondence $\Gamma \in \CH^{2}(S\times
	S')$ with $\Gamma \circ \delta_S =	\delta_{S'} \circ (\Gamma \otimes \Gamma)$
	(``algebra homomorphism'')  and such that $\Gamma$ is invertible as a morphism
	between the Chow motives of $S$ and $S'$ with $\Gamma^{-1}={}^{t}\Gamma$
	(``orthogonality''), where ${}^t\Gamma$ denotes the transpose of $\Gamma$ and
	where $\delta_S$ is the small diagonal in $S \times S\times S$ viewed as a
	correspondence between $S\times S$ and~$S$. Equivalently, $\Gamma$ is an
	isomorphism such that $(\Gamma\otimes\Gamma)_{*}\Delta_{S}=\Delta_{S'}$ and
	$(\Gamma\otimes\Gamma\otimes\Gamma)_{*}\delta_{S}=\delta_{S'}$.\linebreak
	The notion of Frobenius algebra object provides a conceptual way to pack these
	conditions. Roughly speaking, a Frobenius algebra object in a rigid tensor
	category is an algebra object together with an isomorphism to its dual
	object\footnote{Technically, one needs to further tensor the dual object by
		certain power of some tensor-invertible objects\,; this power is called the
		\emph{degree} of this Frobenius structure.} with some compatibility
	conditions.
	The Chow motive of a smooth projective variety carries a natural structure of
	Frobenius algebra object in the rigid tensor category of Chow motives. We refer
	to \S
	\ref{sec:Frob} for more details on Frobenius algebra objects. \medskip
	
	Note that a particular consequence of Theorem \ref{thm:main} is that 
	the induced action $\Gamma_* : \CH^*(S)\to \CH^*(S')$ on the Chow rings  is
	an isomorphism of graded $\Q$-algebras -- this can in fact be deduced from the
	previous work of Huybrechts \cite{huybrechts-derivedeq}\,; see
	Remark~\ref{rmk:ChowRingIso}. The latter, combined with Manin's identity principle, implies that the Chow motives of $S$ and $S'$ are isomorphic. By de Cataldo--Migliorini \cite{dCM}, one deduces that the Chow motives of the Hilbert schemes of length-$n$ subschemes $\Hilb^{n}(S)$ and $\Hilb^{n}(S')$ are isomorphic. However, having an isomorphism of Frobenius
	algebra objects allows us to derive the following much stronger result\,:
	
	\begin{cor2}[Powers and Hilbert schemes]\label{cor:Powers}
		Let $S$ and $S'$ be two twisted derived equivalent K3 surfaces defined over a
		field $k$. Then for any positive integers $n_{1}, \ldots, n_{r}$, there is an
		isomorphism of Frobenius algebra objects $$\h\left(\Hilb^{n_{1}}(S)\times
		\cdots
		\times \Hilb^{n_{r}}(S)\right)\simeq \h\left(\Hilb^{n_{1}}(S')\times \cdots
		\times \Hilb^{n_{r}}(S')\right).$$ As a consequence, there is an algebraic
		correspondence inducing an isomorphism of \emph{graded} $\Q$-algebras\,:
		$$\CH^{*}\left(\Hilb^{n_{1}}(S)\times \cdots \times
		\Hilb^{n_{r}}(S)\right)\simeq \CH^{*}\left(\Hilb^{n_{1}}(S')\times \cdots
		\times
		\Hilb^{n_{r}}(S')\right).$$
		Here $\Hilb^{n}(S)$ denotes the Hilbert scheme of length-$n$ subschemes on
		$S$.
	\end{cor2}

	\medskip

	Now let the base field be the field of complex numbers $\C$. Combining Theorem
	\ref{thm:main} with Huybrechts' twisted derived
	global Torelli theorem \eqref{eq:TwDerTorelli} mentioned above, we can
	establish\,:
	
	\begin{cor2}[Motivic global Torelli theorem for isogenous K3 surfaces]
		\label{cor:torelli}
		Let $S$ and $S'$ be two complex projective K3 surfaces. The following
		statements are
		equivalent\,:
		\begin{enumerate}[(i)]
			\item $S$ and $S'$ are isogenous\,;
			\item $S$ and $S'$ are twisted derived equivalent\,;
			\item $\h(S)$ and $\h(S')$ are isomorphic as Frobenius algebra objects.
		\end{enumerate}	
	\end{cor2}
	
	In the appendix, we construct in Theorem \ref{thm:IsomChowMot} an infinite
	family of pairwise non-isogenous K3 surfaces whose motives are all isomorphic
	as
	algebra objects. This justifies introducing the Frobenius structure. In
	addition, Proposition~\ref{prop:IsoChowRing3} gives some evidence that the
	isogeny class of a K3 surface cannot be determined solely by its Chow ring.\medskip

	Finally, with integral coefficients, an algebra isomorphism between the motives
	of two K3 surfaces must respect the Frobenius structure. Therefore, the
	classical global Torelli theorem \cite{PSS} can be upgraded to a \emph{motivic
		global
		Torelli
		theorem}\,:
	\begin{equation*}\label{eq:MotGlobTorelli}
	\text{$S$ and $S'$ are isomorphic} \Longleftrightarrow \text{their integral
		Chow
		motives are isomorphic as algebra objects.}
	\end{equation*}	
	We refer to Theorem~\ref{thm:motglobtor} for a proof.

	\subsection*{Orlov conjecture and multiplicative structure}
	The proof of Theorem~\ref{thm:main} relies on the Beauville--Voisin
	decomposition of the small diagonal of a K3 surface (see Theorem
	\ref{thm:bv})\,: given an exact linear equivalence  $
	\operatorname{D}^b(S,\alpha)
	\stackrel{\sim}{\longrightarrow}  \operatorname{D}^b(S',\alpha')$  between
	twisted K3 surfaces, we are reduced to exhibiting a correspondence $\Gamma \in
	\CH^2(S\times S')$ such that $\Gamma \circ {}^t\Gamma = \Delta_{S'}$,
	${}^t\Gamma \circ \Gamma = \Delta_{S}$ and $\Gamma_*o_{S} = o_{S'}$, where
	$o_{S}=\frac{1}{24}c_{2}(S)$ is the Beauville--Voisin 0-cycle \cite{bv}. The
	key
	point then consists in 
	showing that if $v_2(\mathcal{E})$ denotes the dimension-2 component of the
	Mukai
	vector of the Fourier--Mukai kernel of an exact linear equivalence 
	$\Phi_{\mathcal{E}} : \operatorname{D}^b(S,\alpha)
	\stackrel{\sim}{\longrightarrow}  \operatorname{D}^b(S',\alpha')$  between
	twisted smooth projective surfaces, then $v_2(\mathcal{E})$ induces an
	isomorphism $\h^2_{\mathrm{tr}}(S) 	\stackrel{\sim}{\longrightarrow} 
	\h^2_{\mathrm{tr}}(S')$ of the transcendental motives of $S$ and $S'$ with
	inverse given by $v_2(\mathcal{E}^\vee\otimes p^* \omega_S)$, where
	$\mathcal{E}^\vee$
	denotes the derived dual of $\mathcal{E}$ and $p : S\times S' \to S$ is the
	natural projection. (In the case of K3 surfaces, we have
	$v_2(\mathcal{E}^\vee\otimes p^* \omega_S)= {}^tv_2(\mathcal{E})$.) This is
	achieved by
	exploiting known cases of Murre's Conjecture~\ref{conj:murre}\eqref{B}, and we
	thereby give an alternative proof of Huybrechts' \cite[Theorem
	0.1]{huybrechts-derivedeq}, generalized to all surfaces\,: \emph{two twisted
		derived equivalent smooth projective
		surfaces have isomorphic Chow motives}\,; see Theorem \ref{thm:huybrechts}. 
	This confirms the two-dimensional case of the following conjecture due to
	Orlov\,:
	\begin{conj2}[Orlov \cite{orlov}] \label{conj:orlov}
		Let $X$ and $Y$ be two derived equivalent smooth projective varieties. Then
		their Chow motives are isomorphic.
	\end{conj2}
	We illustrate also in \S \ref{subsec:3-4} how the same techniques can be used
	to establish
	Orlov's Conjecture~\ref{conj:orlov} in some new cases in dimension 3 and 4\,;
	see
	Proposition \ref{prop:generalization}. 
	\medskip
	
	In view of Theorem \ref{thm:main}, we naturally ask under what
	circumstances one could expect a ``multiplicative Orlov conjecture'', namely
	whether
	two
	derived equivalent smooth projective varieties have isomorphic Chow motives as
	algebra objects, or even as Frobenius algebra objects.  According to the
	celebrated theorem of Bondal--Orlov
	\cite{bo}, this holds true for varieties with ample or anti-ample canonical
	bundle, since any two such derived equivalent varieties must be isomorphic.
	The situation gets more intriguing for varieties with trivial canonical bundle
	and we cannot expect in general that derived equivalent varieties
	have isomorphic Chow motives as Frobenius algebra objects\,: there exists 
	counter-examples for Calabi--Yau threefolds and abelian varieties, where even
	the graded cohomology
	algebras of the two derived equivalent varieties are not isomorphic as
	Frobenius algebras (see
	Example \ref{ex:BC} and Proposition \ref{prop:AV} $(ii)$).
	We notice that, on the other hand, two derived equivalent abelian varieties are
	isogenous and have isomorphic Chow motives as algebra objects (see Proposition
	\ref{prop:AV} $(i)$).
	
	Although we do not provide much evidence beyond the
	case of K3 surfaces, we are tempted to ask, 
	because of the (expected) similarities of the intersection product on
	hyper-K\"ahler varieties with that on abelian varieties 
	(\emph{cf.} Beauville's
	seminal \cite{beauville-splitting}, and also \cite{sv, fv}), the following
	
	\begin{ques2}\label{conj:main}
		Let $X$ and $Y$ be two twisted
		derived equivalent projective hyper-K\"ahler varieties. Are their Chow motives
		isomorphic
		as algebra objects or even as Frobenius algebra objects\,? In particular, are
		their cohomology 
		$\HH^*(-,\Q)$ isomorphic as graded $\Q$-algebras or even as Frobenius
		algebras\,?
	\end{ques2}
	Corollary \ref{cor:Powers} gives an example in higher dimensions. See \S
	\ref{sect:MultOrlov} for other examples, conjectures and rudiment
	discussions on this subject.
	In our accompanying work \cite{FVcubic}, we answer Question~\ref{conj:main} in a non-commutative setting. Namely, we show that two cubic fourfolds with Fourier--Mukai equivalent Kuznetsov components (which can be considered as ``non-commutative K3 surfaces") have isomorphic rational Chow motives as Frobenius algebra objects.

	\subsection*{Canonicity of the Shafarevich cycle}
	
	In \cite{huybrechts-isogenous}, Huybrechts shows that the restriction to the
	transcendental
	cohomology of an isogeny 	$\varphi\colon
	\HH^2(S,\Q)\stackrel{\sim}{\longrightarrow} \HH^2(S',\Q)$ 
	is induced by the cycle $v_2(\mathcal{E}_{n-1}) \circ \cdots \circ
	v_2(\mathcal{E}_{0}) \in \CH^2(S\times S')$, 
	where $\mathcal{E}_0,\ldots,\mathcal{E}_{n-1}$ are the Fourier--Mukai kernels
	in~\eqref{eq:tde}. This provides a Shafarevich cycle for the isogeny $\varphi$.
	In \S \ref{sec:v2}, 
	we  give some evidence for the above cycle to be canonical, that is,
	independent of the choice 
	of a chain of twisted derived equivalence inducing the isogeny. This depends on
	extending a result of Huybrechts 
	and Voisin (Theorem~\ref{thm:hv}) to twisted equivalences. We do however prove
	unconditionally 
	in Theorem~\ref{thm:product-c2} that the intersection of the second Chern
	classes of two 
	objects $\mathcal{E}_1$ and $\mathcal{E}_2$ in $\mathrm{D}^b(S\times S')$
	inducing 
	an equivalence $\mathrm{D}^b(S) \stackrel{\sim}{\longrightarrow}
	\mathrm{D}^b(S')$ 
	is proportional to $c_2(S)\times c_2(S')$ in $\CH^2(S\times S')$. This suggests
	that the Mukai vectors of twisted derived equivalences between K3 surfaces can
	be added to the Beauville--Voisin ring\,; see \S \ref{sec:v2}. \medskip

	\subsection*{Notation and Conventions} We fix a base field $k$. By a
	derived equivalence between smooth projective $k$-varieties, we mean a
	$k$-linear exact equivalence of triangulated categories between their bounded
	derived categories of 
	coherent sheaves. Chow groups will always be considered with
	rational coefficients. Concerning the category of Chow motives over $k$, we
	follow the notation and conventions of \cite{andre}. This category is a
	pseudo-abelian rigid tensor category, whose objects consist of triples
	$(X,p,n)$,
	where $X$ is a smooth projective variety of dimension $d_X$ over $k$, $p\in
	\CH^{d_X}(X\times_k X)$ with $p\circ p = p$, and $n\in \Z$. Morphisms $f:
	M=(X,p,n) \to N=(Y,q,m)$ are elements $\gamma \in \CH^{d_X+m-n}(X\times_k Y)$
	such that $\gamma \circ p = q\circ \gamma = \gamma$. The tensor product of two
	motives
	is defined in the obvious way, while the dual of $M=(X, p, n)$ is $M^\vee =
	(X,{}^tp,-n+d_X)$, where ${}^tp$ denotes the transpose of $p$. The Chow motive
	of a smooth projective variety $X$ is defined as $\h(X) := (X,\Delta_X,0)$,
	where $\Delta_X$ denotes the class of the diagonal inside $X\times X$, and the
	\emph{unit motive} is denoted $\mathds{1} := \h(\Spec (k))$. In
	particular, we have $\CH^l(X) = \Hom(\mathds{1}(-l),\h(X))$. The
	\emph{Tate
		motive} of weight $-2i$ is the motive $\mathds{1}(i) := (\Spec (k),
	\Delta_{\Spec(k)},
	i)$. A motive is said to be of \emph{Tate type} if it is isomorphic to a
	direct sum of Tate motives.

	\subsection*{Acknowledgments} 
	We thank 
	Benjamin Bakker, Chiara Camere and Jean-Yves Welschinger
	for helpful discussions on the Appendix. We also thank the referee for useful suggestions.
	This paper was completed at the Institut Camille Jordan in Lyon where the
	second
	author's stay was  supported by the CNRS. 
	The first author is supported by the Radboud  Excellence Initiative programme, ECOVA
	(ANR-15-CE40-0002), HodgeFun (ANR-16-CE40-0011), LABEX MILYON
	(ANR-10-LABX-0070)
	of Universit\'e de Lyon and \emph{Projet Inter-Laboratoire} 2019 by
	F\'ed\'eration de Recherche en Math\'ematiques Rh\^one-Alpes/Auvergne CNRS
	3490.

	\section{Derived equivalent surfaces}
	
	The aim of this section is to provide an alternative proof
	to the following result of Huybrechts~\cite{huybrechts-derivedeq,
		huybrechts-isogenous}\,: 
	
	\begin{thm}[Huybrechts] \label{thm:huybrechts}
		Let $S$ and $S'$ be two (twisted)
		derived equivalent smooth projective surfaces
		defined over a field $k$. Then $S$ and $S'$ have isomorphic Chow motives.
	\end{thm}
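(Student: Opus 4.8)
The plan is to produce, from a Fourier--Mukai kernel, an explicit invertible correspondence between the motives of $S$ and $S'$ built degree by degree out of the Mukai vector. By \cite{cs}, the given (twisted) equivalence $\Phi_{\mathcal{E}}\colon \D^b(S,\alpha)\xrightarrow{\sim}\D^b(S',\alpha')$ is of Fourier--Mukai type with kernel $\mathcal{E}$, and I would attach to it its (twisted) Mukai vector $v(\mathcal{E})=\mathrm{ch}(\mathcal{E})\sqrt{\mathrm{td}(S\times S')}\in\CH^*(S\times S')$, decomposed by codimension as $v(\mathcal{E})=\sum_{i=0}^{4} v_i(\mathcal{E})$. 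On the target side I would fix Murre's Chow--Künneth decomposition of a smooth projective surface---which is known to exist and to satisfy Murre's conjecture~(B)---and refine its middle piece as $\h^2(S)=\h^2_{\mathrm{alg}}(S)\oplus\h^2_{\mathrm{tr}}(S)$ into algebraic (Tate) and transcendental parts. This yields $\h(S)=\h^2_{\mathrm{tr}}(S)\oplus M_S$, where $M_S:=\h^0(S)\oplus\h^1(S)\oplus\h^2_{\mathrm{alg}}(S)\oplus\h^3(S)\oplus\h^4(S)$ is a motive of abelian type, and similarly for $S'$; it then suffices to match the two kinds of summand separately.

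The heart of the argument, and the step I expect to be the main obstacle, is to show that the codimension-$2$ component $\gamma:=v_2(\mathcal{E})$ induces an isomorphism $\h^2_{\mathrm{tr}}(S)\xrightarrow{\sim}\h^2_{\mathrm{tr}}(S')$ with inverse the codimension-$2$ component $\gamma':=v_2(\mathcal{E}^\vee\otimes p^*\omega_S)$ of the kernel of the inverse equivalence. I would start from the kernel-level identity $(\mathcal{E}^\vee\otimes p^*\omega_S)*\mathcal{E}\cong\mathcal{O}_{\Delta_S}$ expressing the inverse Fourier--Mukai transform, apply the multiplicativity of the Mukai vector under convolution (a Grothendieck--Riemann--Roch computation in which the symmetric $\sqrt{\mathrm{td}}$ normalization makes the identity kernel correspond exactly to the diagonal) to obtain $v(\mathcal{E}^\vee\otimes p^*\omega_S)\circ v(\mathcal{E})=\Delta_S$ as correspondences, and then conjugate both sides by the transcendental projector $\pi^{\mathrm{tr}}_2$. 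Comparing codimensions, the degree-$2$ part of the left-hand side is $\sum_{i+j=4} v_i(\mathcal{E}^\vee\otimes p^*\omega_S)\circ v_j(\mathcal{E})$, and the point is that every term with $(i,j)\neq(2,2)$ dies after conjugation: for degree reasons, together with Murre's conjecture~(B) controlling the action of the relevant correspondences on Chow groups in dimension $2$, each such term factors through a motive of Tate or abelian type, and there is no nonzero morphism between $\h^2_{\mathrm{tr}}$ and any such motive (its cohomology is the transcendental lattice, which carries no Hodge classes and is orthogonal to the algebraic part). What survives is $\gamma'\circ\gamma=\pi^{\mathrm{tr}}_2(S)$, and symmetrically $\gamma\circ\gamma'=\pi^{\mathrm{tr}}_2(S')$. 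Verifying this vanishing precisely---bookkeeping the Todd corrections and checking the orthogonality within the exact range of Murre's~(B)---is where the real work lies.

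For the complementary summand it remains to produce an isomorphism $M_S\cong M_{S'}$. Here I would use that $M_S$ is a direct summand of the motive of an abelian variety: the pieces $\h^0,\h^2_{\mathrm{alg}},\h^4$ are of Tate type, and the weight-one pieces $\h^1,\h^3$ come from the Albanese and Picard varieties. Such motives are finite-dimensional in the sense of Kimura--O'Sullivan, so any correspondence inducing an isomorphism on cohomology is automatically an isomorphism of Chow motives, the homologically trivial endomorphisms forming a nilpotent ideal. The cohomological realization of $v(\mathcal{E})$ is an isomorphism of total cohomology; sandwiching it between the Chow--Künneth projectors of $S$ and $S'$ produces, on each graded piece, a correspondence that is a cohomological isomorphism and hence, by finite-dimensionality, an isomorphism of motives. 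The weight-one pieces require only that the Albanese varieties of $S$ and $S'$ be isogenous---which is all the cohomological comparison provides---since isogenous abelian varieties have isomorphic rational Chow motives. Assembling the isomorphism on $\h^2_{\mathrm{tr}}$ with the one on $M$ gives $\h(S)\cong\h(S')$, and the twisted case runs identically with the twisted Chern character in place of the ordinary one.
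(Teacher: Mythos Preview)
Your treatment of $\h^2_{\mathrm{tr}}$ is essentially the paper's: the identity $v(\mathcal{E}^\vee\otimes p^*\omega_S)\circ v(\mathcal{E})=\Delta_S$, conjugated by the transcendental projectors with the cross-terms killed by Murre's conjecture~(B) for products of surfaces (Theorem~\ref{thm:product}\eqref{ii}), is exactly the weight argument of \S\ref{subsec:weight}.

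The argument for the complementary summand $M_S$ has a gap. You assert that sandwiching $v(\mathcal{E})$ between the Chow--K\"unneth projectors of $S$ and $S'$ gives, on each graded piece, a cohomological isomorphism; but the Mukai vector does not respect the cohomological grading---it mixes $\HH^0,\HH^2,\HH^4$ among themselves and $\HH^1,\HH^3$ among themselves---so $\pi^i_{S'}\circ v(\mathcal{E})\circ\pi^i_S$ need not be an isomorphism on $\HH^i$, and for the same reason your claim that the cohomological comparison already yields an isogeny of Albanese varieties is unjustified. Finite-dimensionality would lift a graded cohomological isomorphism to a motivic one, but you have not produced such isomorphisms. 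The paper handles this by reusing the same Murre-(B) weight argument you invoked for $\h^2_{\mathrm{tr}}$: it shows that $v(\mathcal{E})$ induces isomorphisms of the weight-matched sums $\h^0(S)\oplus\h^2_{\mathrm{alg}}(S)(1)\oplus\h^4(S)(2)$ and $\h^1(S)\oplus\h^3(S)(1)$ with their counterparts for $S'$. The first splits by cancelling the unit motives, and the second splits using $\h^1\simeq\h^3(1)$ together with the semi-simplicity of curve-type motives (Proposition~\ref{prop:curve}), which plays the role of your finite-dimensionality appeal but only after the weight argument has produced the right isomorphisms to cancel from.
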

	
	The reason for including such a proof is threefold\,: first it provides anyway
	all
	the prerequisites and notation for the proof of Theorem \ref{thm:main} which
	will be given in \S\ref{S:mainthm}\,;
	second by avoiding Manin's identity principle\footnote{Manin's identity
		principle only establishes that $ v_2(\mathcal{E}^\vee\otimes
		p^*\omega_S)\circ
		v_2(\mathcal{E})$ acts as the identity on $\CH^2(\h^2_{\mathrm{tr}}(S))$ which
		implies that it is unipotent as an endomorphism of $\h^2_{\mathrm{tr}}(S)$.}
	(as is employed in \cite{huybrechts-derivedeq})
	we obtain an explicit inverse to
	the isomorphism $v_2(\mathcal{E}) : \h^2_{\mathrm{tr}}(S)
	\stackrel{\sim}{\longrightarrow} \h^2_{\mathrm{tr}}(S')$ which will be
	essential to the
	proof of Theorem \ref{thm:main}\,;
	and third it provides somehow a link between
	Orlov's Conjecture \ref{conj:orlov} and Murre's Conjecture \ref{conj:murre}
	which itself is intricately linked to the conjectures of Bloch and Beilinson
	(see \cite{jannsen}).

	\subsection{Murre's conjectures}
	
	We fix a base field $k$ and a Weil cohomology theory $\HH^*(-)$ for smooth
	projective
	varieties over $k$. Concretely, we think of $\HH^*(-)$ as Betti cohomology in
	case $k\subseteq \C$, or as $\ell$-adic cohomology when $\mathrm{char}(k) \neq
	\ell$.
	
	\begin{conj}[Murre \cite{murre}]\label{conj:murre}
		Let $X$ be a smooth projective variety of dimension $d$ over~$k$. 
		\begin{enumerate}[(A)]
			\item \label{A} The Chow motive $\h(X)$ has a Chow--K\"unneth decomposition
			(also
			called \emph{weight decomposition})  $\h(X) =
			\h^0(X) \oplus \cdots \oplus \h^{2d}(X)$, meaning that $\HH^*(\h^i(X)) =
			\HH^i(X)$ for all $i$.
			\item \label{B} $\CH^l(\h^i(X)) := \Hom(\mathds{1}(-l),\h^i(X)) = 0$ for all
			$i>2l$ and for all $i<l$.
			\item \label{C} The filtration $\mathrm{F}^k\CH^l(X) := \CH^l\big( 
			\bigoplus_{i\leq 2l-k} \h^i(X) \big)$ does not depend on the choice of a
			Chow--K\"unneth
			decomposition.
			\item \label{D} $\mathrm{F}^1\CH^l(X) = \CH^l(X)_{\mathrm{hom}} := \ker
			(\CH^l(X) \xrightarrow{\operatorname{cl}}
			\HH^{2l}(X))$.
		\end{enumerate}
	\end{conj}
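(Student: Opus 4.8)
Since Conjecture~\ref{conj:murre} is open in the stated generality --- the conjunction of \eqref{A}--\eqref{D} is essentially equivalent in strength to the Bloch--Beilinson conjectures (see \cite{jannsen}) --- I do not expect to settle it outright; the plan is instead to indicate how each part would be established in the cases this paper uses and to locate the general obstruction.

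For part \eqref{A} I would produce mutually orthogonal idempotents $\pi_0,\ldots,\pi_{2d}\in\CH^d(X\times X)$ with $\sum_i \pi_i=\Delta_X$, each $\pi_i$ acting on cohomology as the projector onto $\HH^i(X)$. First I would construct the outer projectors $\pi_0,\pi_1,\pi_{2d-1},\pi_{2d}$ unconditionally, from a zero-cycle of degree one together with the Picard and Albanese maps, following Murre \cite{murre}. The difficulty is then concentrated entirely in the middle projectors: there are none for curves, Murre constructed all of them for surfaces, for abelian varieties they arise from the Fourier transform and the action of multiplication-by-$n$ (Deninger--Murre, Shermenev), and in general their existence is governed by the Lefschetz-type standard conjectures. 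In particular, for the surfaces actually used below, part \eqref{A} is available as a theorem.

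Granting such a Chow--K\"unneth decomposition, part \eqref{B} becomes a concrete vanishing statement for the Chow groups of the summands $\h^i(X)$: the two bounds $i>2l$ and $i<l$ express that $\h^i(X)$ can only carry codimension-$l$ cycles in the expected range $l\le i\le 2l$, which for low-dimensional $X$, and surfaces in particular, I would verify directly on each summand. Parts \eqref{C} and \eqref{D} are the arithmetically deepest: \eqref{C} asks that the filtration $\F^\bullet\CH^l$ read off from the decomposition be independent of the decomposition, and \eqref{D} identifies its first step with $\CH^l(X)_{\mathrm{hom}}$; together they amount to constructing the Bloch--Beilinson filtration with its conjectured properties.

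The hard part, and the reason the conjecture is open in general, is thus twofold: exhibiting the middle Chow--K\"unneth projectors in arbitrary dimension (tied to the standard conjectures) and verifying the independence and boundary statements \eqref{C}--\eqref{D} (which encode Bloch--Beilinson). For the purposes of this paper only the surface case of part \eqref{B} is needed, and there all four assertions hold, so the program closes in the range we require.
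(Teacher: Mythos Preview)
Your reading is correct: this is stated in the paper as an open conjecture, not as a theorem, and accordingly the paper gives no proof; it only remarks (immediately after the statement) that, by Jannsen \cite{jannsen}, the conjunction of \eqref{A}--\eqref{D} for all smooth projective varieties is equivalent to the Bloch--Beilinson conjectures. Your proposal recognizes exactly this and is in line with the paper's treatment --- the paper uses only the known surface case (Theorem~\ref{thm:product}, due to Murre \cite{murre-surfaces,murre2}) and the reformulation of \eqref{B} for products given in Proposition~\ref{prop:formal}, which matches your description of what is actually needed.
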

	
	The filtration defined in \eqref{C} is conjecturally the Bloch--Beilinson
	filtration \cite{Beilinson} (see also \cite[Chapter 11]{VoisinBook}).
	In fact, as shown by Jannsen \cite{jannsen}, the conjecture of Murre holds for
	all smooth projective varieties if and only if the conjectures of
	Bloch--Beilinson hold. We refer to \cite{jannsen} for precise statements. 
	
	\begin{prop}\label{prop:formal}
		Let $X$ and $Y$ be smooth projective varieties over $k$. Assume that $\h(X)$
		and	$\h(Y)$ admit a Chow--K\"unneth decomposition as in
		Conjecture~\ref{conj:murre}\eqref{A}.
		Then, with respect
		to the Chow--K\"unneth decomposition $$\h^n(X\times Y) = \bigoplus_{i+j = n}
		\h^{2d_X-i}(X)^\vee(-d_X)\otimes \h^j(Y),$$
		the product $X\times Y$ satisfies
		Conjecture~\ref{conj:murre} \eqref{B}  if and only if 
		\begin{equation}\label{eq}
		\Hom(\h^i(X),\h^j(Y)(k))=  0\quad  \text{for all } i<j-2k \text{ and for all
		}
		i>j+d_X-k.
		\end{equation}
	\end{prop}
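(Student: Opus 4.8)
The plan is to reduce the statement to a purely formal computation in the rigid tensor category of Chow motives, using the rigidity (duality) adjunctions together with careful bookkeeping of the Tate twists. I would first note in passing that the displayed decomposition is genuinely a Chow--K\"unneth decomposition: since the diagonal is symmetric one has $\bigoplus_i \h^{2d_X-i}(X)^\vee(-d_X) = \h(X)^\vee(-d_X)\cong \h(X)$, so these summands exhaust $\h(X)\otimes\h(Y) = \h(X\times Y)$\,; moreover $\h^{2d_X-i}(X)^\vee(-d_X)$ realizes in cohomological degree $i$ to $\HH^{2d_X-i}(X)^\vee(-d_X)\cong \HH^i(X)$ by Poincar\'e duality, so the summand $\h^{2d_X-i}(X)^\vee(-d_X)\otimes \h^j(Y)$ realizes in degree $i+j=n$ to $\HH^i(X)\otimes\HH^j(Y)$, which sum to $\HH^n(X\times Y)$ by K\"unneth.

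The key step is the adjunction. For motives $M,N$ rigidity gives $\Hom(\mathds{1}(-l), M^\vee(-d_X)\otimes N) = \Hom(M, N(l-d_X))$, obtained by applying the equivalence $-\otimes\mathds{1}(l)$ to both arguments and then invoking $\Hom(\mathds{1}, U^\vee\otimes V)=\Hom(U,V)$. Applying this termwise, and using that $\Hom$ is additive over direct sums, yields
$$\CH^l(\h^n(X\times Y)) = \Hom(\mathds{1}(-l),\h^n(X\times Y)) = \bigoplus_{i+j=n}\Hom\big(\h^{2d_X-i}(X),\, \h^j(Y)(l-d_X)\big).$$

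Next I would spell out Murre's condition \eqref{B} for the product with respect to this decomposition: it asserts $\CH^l(\h^n(X\times Y))=0$ for all $n>2l$ and all $n<l$. By additivity this is equivalent to the vanishing of each summand above in the relevant range. I then reparametrise by setting $p:=2d_X-i$, $q:=j$ and $k:=l-d_X$, so that each summand becomes $\Hom(\h^p(X),\h^q(Y)(k))$\,; substituting $n=2d_X-p+q$ and $l=k+d_X$, the constraints become $n>2l \Leftrightarrow p<q-2k$ and $n<l \Leftrightarrow p>q+d_X-k$. Renaming $(p,q,k)$ as $(i,j,k)$ recovers precisely the asserted condition \eqref{eq}.

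Finally, for the equivalence I would observe that $(i,j,k)\mapsto (n,l)=(2d_X-i+j,\,k+d_X)$ sets up a bijection between the individual $\Hom$-groups appearing in \eqref{eq} and the summands of the Chow groups $\CH^l(\h^n(X\times Y))$ constrained by \eqref{B}\,; since each such $\Hom$ occurs as exactly one summand of exactly one such Chow group, vanishing on one side is equivalent to vanishing on the other, establishing both directions simultaneously (vacuous ranges, where $\h^i(X)$ or $\h^j(Y)$ vanishes, cause no trouble). The only genuinely delicate point is the index- and Tate-twist bookkeeping in the reparametrisation --- in particular the reflection $i\mapsto 2d_X-i$ arising from the Poincar\'e duality hidden in $\h^{2d_X-i}(X)^\vee(-d_X)$ --- but this is elementary once the adjunction is in place, and there is no geometric obstacle.
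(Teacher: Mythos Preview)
Your proof is correct and follows essentially the same approach as the paper's: both reduce the statement to the rigidity adjunction $\Hom(\mathds{1}(-l),\h^{2d_X-i}(X)^\vee(-d_X)\otimes\h^j(Y))\cong\Hom(\h^{2d_X-i}(X),\h^j(Y)(l-d_X))$ and then translate the inequalities defining Murre's condition \eqref{B} into those of \eqref{eq} via the substitution $p=2d_X-i$, $k=l-d_X$. The paper's proof is simply a terser version of yours, writing the single displayed identity $\Hom(\mathds{1}(-k-d_X),\h^n(X\times Y))=\bigoplus_{i-j=2d_X-n}\Hom(\h^i(X),\h^j(Y)(k))$ and leaving the index bookkeeping implicit.
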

	\begin{proof}
		This is formal\,: we have 
		\begin{align*}
		\Hom(\mathds{1}(-k-d_X), \h^{n}(X\times Y)) 
		& = \bigoplus_{i-j=2d_{X}-n}	\Hom(\mathds{1}(-k), \h^j(Y)\otimes
		\h^i(X)^\vee)\\
		& = \bigoplus_{i-j=2d_{X}-n}\Hom(\h^i(X),\h^j(Y)(k)).\qedhere
		\end{align*}
	\end{proof}

	In other words, Murre's conjecture \eqref{B} implies that
	a motive of pure weight does not admit any non-trivial morphism to a motive of
	pure larger weight.

	\begin{thm}[Murre \cite{murre-surfaces}] \label{thm:product}
		Let $X$ be a smooth projective irreducible variety of dimension $d_X$ over a
		field~$k$.
		\begin{enumerate}[(i)]
			\item \label{i} The Chow motive of $X$ admits a decomposition $$\h(X) =
			\h^0(X) \oplus \h^1(X) \oplus M \oplus \h^{2d_X-1}(X) \oplus \h^{2d_X}(X)$$
			such
			that
			$\HH^*(\h^i(X)) = \HH^i(X)$\,; in particular, Conjecture
			\ref{conj:murre}\eqref{A} holds for curves and
			surfaces. Moreover, such a decomposition can be chosen such that
			\begin{itemize}
				\item   $\h^{2d_X}(X)(d_X) = \h^0(X)^\vee \simeq \h^0(X)$ and
				$\h^{2{d_X}-1}(X)(d_X) = \h^1(X)^\vee \simeq \h^1(X)(1)$\,;
				\item 	$\h^0(X)$
				is the unit motive $\mathds 1$ and   $\h^1(X) \simeq
				\h^1(\Pic^0(X)_{\mathrm{red}})$\,;
				\item $\Hom(\mathds{1}(-i),\h^1(X)) = 0$ for
				$i\neq
				1$, and  $\Hom(\mathds{1}(-1),\h^1(X)) = \Pic^0(X)_{\mathrm{red}}(k)\otimes
				\Q$.
			\end{itemize}
			\item \label{ii}Equation \eqref{eq} holds in case $X$ and $Y$ are varieties
			of
			dimension $\leq 2$ endowed with a Chow--K\"unneth decomposition as in
			\eqref{i}.
		\end{enumerate}
	\end{thm}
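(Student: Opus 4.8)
The plan is to prove part (i) by explicitly constructing mutually orthogonal idempotents $\pi_0,\pi_1,\pi_{2d-1},\pi_{2d}\in\CH^{d}(X\times X)$ (with $d=d_X$) realizing the corresponding K\"unneth components, then setting $\h^i(X)=(X,\pi_i,0)$ and $M=(X,\Delta_X-\pi_0-\pi_1-\pi_{2d-1}-\pi_{2d},0)$; part (ii) is then a finite bookkeeping of $\Hom$-groups built from the structure furnished by part (i). For the outer projectors I fix a $0$-cycle $z$ of degree $1$ on $X$ (which exists after passing to $\Q$-coefficients) and put $\pi_0=z\times X$ and $\pi_{2d}={}^{t}\pi_0=X\times z$. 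These are orthogonal idempotents with the correct cohomology, and since $(X,\pi_0,0)^\vee=(X,{}^t\pi_0,d)$ one gets $\h^{2d}(X)(d)=\h^0(X)^\vee$ for free, while $\h^0(X)=(X,\pi_0,0)\simeq\mathds 1$.

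The heart of (i) is the construction of $\pi_1$, together with $\pi_{2d-1}:={}^t\pi_1$, which then automatically yields $\h^{2d-1}(X)(d)=\h^1(X)^\vee$. Here I would transport the weight-$1$ projector from an abelian variety: abelian varieties carry a Chow--K\"unneth decomposition (Shermenev, Deninger--Murre, K\"unnemann), and the Albanese morphism $X\to\Alb(X)$ together with the duality $\Alb(X)=(\Pic^0(X)_{\mathrm{red}})^\vee$ lets me pull back the weight-$1$ projector and idempotent-complete it to a $\pi_1$ with $\h^1(X)\simeq\h^1(\Pic^0(X)_{\mathrm{red}})$. The universal property of the Albanese then yields the asserted $\Hom(\mathds 1(-1),\h^1(X))=\Pic^0(X)_{\mathrm{red}}(k)\otimes\Q$ and the vanishing $\Hom(\mathds 1(-i),\h^1(X))=0$ for $i\neq 1$. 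After replacing the four projectors by mutually orthogonal ones (a standard idempotent-completion manipulation), $M$ is idempotent; for a curve $M=0$ and for a surface $M$ realizes $\HH^2$, so in both cases this is a full Chow--K\"unneth decomposition and Conjecture~\ref{conj:murre}(A) follows.

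For part (ii), by Proposition~\ref{prop:formal} equation~\eqref{eq} is exactly Murre's Conjecture~\ref{conj:murre}(B) for the product $X\times Y$, and I would verify it by decomposing both motives according to part (i) into \emph{Tate} summands ($\h^0,\h^{2d}$), \emph{abelian} summands ($\h^1,\h^{2d-1}$, which are $\h^1$'s of abelian varieties up to twist), and the \emph{transcendental-type} summand $M$, then checking the vanishing of $\Hom(\h^i(X),\h^j(Y)(k))$ in the two stated ranges block by block. The purely Tate and abelian blocks are immediate: $\Hom(\mathds 1(a),\mathds 1(b))=0$ for $a\neq b$; $\Hom(\mathds 1(-l),\h^1(A))=0$ for $l\neq 1$ by part (i); and $\Hom(\h^1(A),\h^1(B)(k))=0$ for $k\neq 0$, a standard fact for abelian varieties. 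The blocks involving $M$ use that $M$ realizes only the middle cohomology of a surface and that $\CH^l(M)=0$ for $l\notin\{1,2\}$ (Murre's Conjecture~\ref{conj:murre}(B) for the surface itself), which disposes of $\Hom(\mathds 1(a),M)$, $\Hom(M,\mathds 1(a))$ and the extreme twists $\Hom(M_X,M_Y(k))$ with $k\notin\{0,1,2\}$.

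The main obstacle is the mixed blocks $\Hom(\h^1(A),M(k))$ and $\Hom(M,\h^1(B)(k))$: these are always homologically trivial (an odd-weight source cannot surject cohomologically onto an even-weight target), so one must prove they vanish on the nose. I would reduce, using that $\h^1(A)$ is a direct summand of $\h^1(C)$ for a suitable curve $C$ (via the Lefschetz injection $\Pic^0(S)\hookrightarrow\Pic^0(C)$ for an ample curve section), to computing the relevant component of $\CH^*(C\times S)$ for a surface $S$; the threefold $C\times S$ is fibred over a curve, and I would exploit this structure together with the refined splitting of $\CH^*(S)$ coming from part (i) to show that no homologically trivial correspondence survives in the pertinent codimension. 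This transcendental--abelian interaction is precisely where the full force of the surface-level results of part (i) is required, and it is the only genuinely non-formal input in the proof of (ii).
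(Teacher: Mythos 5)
Your part (i) is essentially Murre's original construction from \cite{murre-surfaces} (outer projectors from a degree-one zero-cycle, $\pi_1$ transported from the Picard/Albanese varieties), which is exactly what the paper cites rather than reproves; and your block-by-block framework for (ii), via Proposition~\ref{prop:formal}, matches the paper's reduction. The mixed blocks $\Hom(\h^1(X),\h^2(Y)(k))$ that you single out as ``the only genuinely non-formal input'' are indeed non-formal, but the paper disposes of them by citing Murre's verification \cite{murre2} of Conjecture~\ref{conj:murre}\eqref{B} for products of a curve with a curve or surface: since $\h^1(X)$ is a direct summand of the motive of a curve, every block in which at least one factor is among $\h^0,\h^1,\h^{2d-1},\h^{2d}$ reduces to that case. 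Your sketch for this block (reduce to $C\times S$ and exploit the fibration over a curve) points at the same source, though as written it is only a plan, not a proof.

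The genuine gap is in the block you dismiss, namely $\Hom(M_X,M_Y(k))=\CH^{2+k}\bigl(\h^2(X)\otimes\h^2(Y)\bigr)$ for $k<0$, i.e.\ the vanishing of $\CH^0$ and $\CH^1$ of $\h^2(X)\otimes\h^2(Y)$. You assert this follows from $\CH^l(M)=0$ for $l\notin\{1,2\}$ on each surface separately, but the Chow groups of a tensor product of motives are not controlled by the Chow groups of its factors: $\h^2(X)\otimes\h^2(Y)$ is a direct summand of the motive of the \emph{fourfold} $X\times Y$ cut out by $\pi^2_X\otimes\pi^2_Y$, and one must show that this projector annihilates $\CH^0(X\times Y)$ and $\CH^1(X\times Y)$; for $\CH^1$ the issue is the divisor classes coming from $\Hom(\Alb(X),\Pic^0(Y))$, which are not exterior products. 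This is precisely the one point where the paper supplies a new argument: for any $Z$ carrying a Chow--K\"unneth decomposition one has $\CH^0(Z)=\CH^0(\h^0(Z))$ and $\CH^1(Z)=\CH^1(\h^1(Z)\oplus\h^2(Z))$, because $\pi^0_Z$ (resp.\ $\pi^2_Z$) acts as the identity on the image of $\CH^0(Z)$ (resp.\ $\CH^1(Z)$) in cohomology while, by Murre's construction, $\pi^1_Z$ acts as the identity on $\ker(\CH^1(Z)\to\HH^2(Z))$; applying this to $Z=X\times Y$ with the product decomposition, and noting that $\h^2(X)\otimes\h^2(Y)$ sits in $\h^4(X\times Y)$, orthogonal to $\h^0\oplus\h^1\oplus\h^2$ of the product, yields the required vanishing. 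You need an argument of this kind (or the equivalent one: any class in $\CH^1(\h^2(X)\otimes\h^2(Y))$ is homologically trivial, hence lies in $\Pic^0(X\times Y)_\Q=p^*\Pic^0(X)_\Q\oplus q^*\Pic^0(Y)_\Q$, which $(\pi^2_X\otimes\pi^2_Y)_*$ visibly kills). A minor further point: your ``standard fact'' that $\Hom(\h^1(A),\h^1(B)(k))=0$ for all $k\neq 0$ is false as stated (a polarization gives a nonzero element of $\Hom(\h^1(A),\h^1(A)(1))$); fortunately \eqref{eq} only requires the vanishing for $k<0$ and $k>2$, which does hold.
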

	\begin{proof} Item \eqref{i} in the case of surfaces is the main result of
		\cite{murre-surfaces}. In fact, for any smooth projective variety $X$ of any
		dimension, $\h^1(X)$ can be constructed as a direct summand of the motive of
		a
		smooth projective curve.
		As for \eqref{ii}, this was checked by Murre \cite{murre2} in the case one  of
		$X$ and $Y$ has dimension $\leq 1$. Thanks to item \eqref{i} and Proposition
		\ref{prop:formal}, it only remains
		to
		check that $\CH^l(\h^2(X) \otimes \h^2(Y)) = 0$ for $l = 0,1$ for smooth
		projective surfaces $X$ and $Y$. For that purpose, we simply observe that for
		any choice of a Chow--K\"unneth decomposition (if it exists) on the motive of
		a
		smooth
		projective variety $Z$ we have $\CH^0(Z) = \CH^0(\h^0(Z))$ and $\CH^1(Z) =
		\CH^1(\h^2(Z) \oplus \h^1(Z))$. (Indeed, denote $\pi^i_Z$ the projectors
		corresponding to the Chow--K\"unneth decomposition of $Z$, then by definition
		$\pi^{2i}_Z$ acts as the identity on $\HH^{2i}(Z)$ and hence on
		$\mathrm{im}(\CH^i(Z) \to \HH^{2i}(Z))$, and by Murre \cite{murre-surfaces}
		$\pi^1_Z$ acts as the identity on $\mathrm{ker}(\CH^1(Z) \to \HH^{2}(Z))$.
		Therefore $\pi^2_Z
		+
		\pi^1_Z$, which is a projector, acts as the identity on $\CH^1(Z)$.)
	\end{proof}

	The following terminology will be convenient for our purpose. We say that a
	Chow motive $M$ is
	of \emph{curve type} (or of \emph{pure weight 1}) if it is isomorphic
	to a direct summand of a direct sum of motives of the form $\h^1(C)$, where $C$
	is a smooth projective curve defined over $k$.
	Motives of curve type form a thick additive subcategory\footnote{It is however
		not stable under tensor products and it does not contain any Tate motives. The
		tensor category generated by the Tate motives and the motives of curve type
		consists of the so-called motives of \emph{abelian type}.} and enjoy the
	following property, which is also shared by Tate	motives\,:

	\begin{prop}\label{prop:curve}
		The full subcategory of motives whose objects are of curve type is abelian
		semi-simple. Moreover, the realization functor $M \mapsto \HH^*(M)$ is
		conservative\footnote{A functor $F:\mathcal C \to \mathcal D$ is said to be \emph{conservative} if it is ``isomorphism-reflecting'', that is, if $f:C\to C'$ is a morphism in $\mathcal C$ such that $F(f)$ is an isomorphism in $\mathcal D$, then $f$ is an isomorphism in $\mathcal C$.}.
	\end{prop}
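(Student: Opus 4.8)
The plan is to identify the full subcategory of motives of curve type with a full subcategory of the category $\mathrm{AbVar}^0_k$ of abelian varieties over $k$ up to isogeny, and then to read off both assertions from Poincar\'e complete reducibility and from the faithfulness of the realization on motives of the form $\h^1$. First I would recall that for a smooth projective curve $C$ one has $\h^1(C)\simeq \h^1(\Pic^0(C)_{\mathrm{red}})$ by Theorem~\ref{thm:product}\eqref{i}, and that $\h^1$ takes a product of abelian varieties to a direct sum; hence a finite direct sum of motives $\h^1(C_i)$ is of the form $\h^1(A)$ for an abelian variety $A/k$, and every motive of curve type is a direct summand of some such $\h^1(A)$. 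The key structural input, which I would cite rather than reprove (see \cite{andre}), is that the functor $A\mapsto \h^1(A)$ from $\mathrm{AbVar}^0_k$ to Chow motives is fully faithful, i.e.\ $\Hom(\h^1(A),\h^1(B))=\Hom(A,B)\otimes\Q$. Granting this, the subcategory of motives of curve type is equivalent, as a $\Q$-linear additive category, to a full, idempotent-closed additive subcategory of $\mathrm{AbVar}^0_k$.

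For the semisimplicity I would invoke Poincar\'e complete reducibility: $\mathrm{AbVar}^0_k$ is a semisimple abelian category, every object being isogenous to a product of simple abelian varieties whose endomorphism algebras are division algebras. In a semisimple abelian category every morphism $f\colon X\to Y$ admits a kernel and a cokernel that are direct summands of $X$ and $Y$ respectively; therefore any full additive subcategory closed under direct summands is again abelian and semisimple. Applying this to the subcategory identified above yields that motives of curve type form an abelian semisimple category.

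For conservativity, I would use that on motives of curve type the realization $\HH^*$ is concentrated in degree~$1$ and there coincides with the $\HH^1$ of the associated abelian varieties, so that its faithfulness reduces to the injectivity of $\Hom(A,B)\otimes\Q\hookrightarrow \Hom_\Q(\HH^1(A),\HH^1(B))$ — the classical fact that a nonzero homomorphism of abelian varieties (up to isogeny) acts nontrivially on (Betti or $\ell$-adic) $\HH^1$. Conservativity then follows formally from semisimplicity: given $f\colon M\to N$ with $\HH^*(f)$ an isomorphism, split $M\simeq \ker f\oplus M'$ and $N\simeq \mathrm{im}\,f\oplus\mathrm{coker}\,f$ with $f|_{M'}\colon M'\xrightarrow{\sim}\mathrm{im}\,f$; since $\HH^*$ is additive it respects these biproduct decompositions and annihilates $\HH^*(\ker f)$ and $\HH^*(\mathrm{coker}\,f)$, so that $\HH^*(f)$ being an isomorphism forces $\HH^*(\ker f)=\HH^*(\mathrm{coker}\,f)=0$, whence $\ker f=\mathrm{coker}\,f=0$ by faithfulness and $f$ is an isomorphism.

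The genuinely substantive ingredients here are external to the categorical formalism: the Chow-theoretic full faithfulness $\Hom(\h^1 A,\h^1 B)=\Hom(A,B)\otimes\Q$ and Poincar\'e reducibility. I expect the main obstacle to be marshalling these structural facts correctly — in particular the full faithfulness at the level of Chow groups, not merely of realizations, since it is precisely this lift that makes the idempotent-closed subcategory of $\mathrm{AbVar}^0_k$ control the motives rather than only their cohomology. Once these are in place, both the semisimplicity and the conservativity are formal consequences.
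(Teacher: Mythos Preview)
Your proposal is correct and follows essentially the same route as the paper: identify the subcategory of curve-type motives with the isogeny category of abelian varieties via $A\mapsto\h^1(A)$ (the paper cites \cite[Proposition~4.3.4.1]{andre} for this), and deduce semisimplicity from Poincar\'e reducibility. For conservativity the paper is terser, appealing only to semisimplicity together with the dimension count $\dim\HH^*(\h^1(A))=2g$, which directly shows that the realization detects zero objects; your faithfulness-plus-kernel/cokernel argument unwinds the same implication and is a valid elaboration.
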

	\begin{proof}
		The first statement follows from the fact that this full subcategory of
		motives of curve type is equivalent to the category of abelian varieties up to
		isogeny, via the Jacobian construction\,; see
		\cite[Proposition~4.3.4.1]{andre}.
		The second
		statement follows from the first one together with the fact that
		$\HH^{*}(\h^{1}(A))$ is a $2g$-dimensional vector space for an abelian variety
		$A$ of  dimension~$g$.
	\end{proof}

	\subsection{Proof of Theorem \ref{thm:huybrechts}}
	First, recall that, given a twisted K3 surface $(S, \alpha)$, that is, a K3 surface $S$ equipped with a Brauer class $\alpha\in \operatorname{Br}(S)$, 
	 the Chern character of an $\alpha$-twisted sheaf~$E$ is defined as follows\,: choose a positive integer $n$ such that $\alpha^n=1$ (hence $E^{\otimes n}$ is untwisted), then
	\[\operatorname{ch}(E):=\sqrt[n]{\operatorname{ch}(E^{\otimes n})}.\]
	The definition is independent of the choice of $n$, can be extended naturally to the whole derived category of twisted sheaves $\D^b(S, \alpha)$, and satisfies the usual compatibilities with tensor operations (see \cite[Section 2.1]{huybrechts-isogenous}). We then define the \emph{Mukai vector} $v(E):=\operatorname{ch}(E)\sqrt{\operatorname{td}(T_S)}$.
	
	We can thus observe as in \cite[Section 2]{huybrechts-isogenous} that for twisted
	equivalences the yoga of Fourier--Mukai kernels, their action on Chow groups
	induced by Mukai vectors, and how they behave under convolutions works as in
	the
	untwisted case, as long as we work with rational coefficients. Therefore, for ease of notation, we will only give a proof of
	Theorem \ref{thm:huybrechts} in the untwisted case.
	
	\subsubsection{Derived equivalences and motives, following
		Orlov.}\label{subsec:orlov}
	In general, let $\Phi_{\mathcal{E}} : \mathrm{D}^b(X)
	\stackrel{\sim}{\longrightarrow} \mathrm{D}^b(Y)$ be an exact equivalence with
	Fourier--Mukai kernel $\mathcal{E} \in \mathrm{D}^b(X\times Y)$ between the
	derived categories of two
	smooth projective $k$-varieties of dimension $d$. 
	Its inverse can be described as $\Phi_{\mathcal{E}}^{-1} \simeq
	\Phi_{\mathcal{E}^\vee\otimes p^*\omega_X[d]}\simeq
	\Phi_{\mathcal{E}^\vee\otimes q^*\omega_Y[d]}$, where $\mathcal{E}^\vee$ is the
	derived dual of $\mathcal{E}$ and $p, q$ are the projections from $X\times Y$
	to $X$ and $Y$ respectively.
	As observed by Orlov \cite{orlov},
	the Mukai vector 
	$$v(\mathcal{E}) := \mathrm{ch}(\mathcal{E})\cdot \sqrt{\mathrm{td}(X\times
		Y)} \in \CH^{*}(X\times Y)$$ induces a split injective morphism 
	of motives $\h(X) \longrightarrow \bigoplus_{i=-d}^{d} \h(Y)(i)$ with left
	inverse given by $v(\mathcal{E}^\vee\otimes p^*\omega_X[d])$, \emph{i.e.}
	$$\xymatrix{\mathrm{id} : \ \h(X) \ar[rrr]^{v(\mathcal{E})\quad } &&&
		\bigoplus_{i=-d}^{d} \h(Y)(i) \ar[rrr]^{\qquad v(\mathcal{E}^*\otimes
			p^*\omega_X[d])} && & \h(X).
	}$$
	In particular, $v(\mathcal{E}^\vee\otimes p^*\omega_X[d]) \circ v(\mathcal{E})
	=
	\Delta_{X}$. In fact, as observed by Orlov \cite{orlov}, the latter identity
	shows that $v(\mathcal E)$ seen as a morphism of ind-motives $	\bigoplus_{i\in
		\Z} \h(X)(i) \to 	\bigoplus_{i\in \Z} \h(Y)(i)$ is an isomorphism with inverse
	given by $v(\mathcal{E}^\vee\otimes p^*\omega_X[d])$.

	\subsubsection{The refined decomposition of the motive of surfaces, following
		Kahn--Murre--Pedrini}
	\label{subsec:kmp}
	Let $S$ be a smooth projective surface over $k$. The motive $\h(S)$ admits a
	Chow--K\"unneth decomposition as in Murre's Theorem
	\ref{thm:product}\eqref{i}\,; in
	particular $\h^0(S) = \h^4(S)(2)^\vee$ is the unit motive $\mathds 1$ and
	$\h^1(S) =
	\h^3(S)(2)^\vee$ is of curve type. Following
	Kahn--Murre--Pedrini \cite{kmp}, the summand $\h^2(S)$ admits a further
	decomposition $$\h^2(S) = \h^2_{\mathrm{alg}}(S) \oplus \h^2_{\mathrm{tr}}(S)$$
	defined as
	follows. Let $k^s$ be a separable closure of $k$ and let $E_1,\ldots,E_\rho$ be
	non-isotropic
	divisors in $\CH^1(S_{k^s})$ whose images in
	$\CH^1(\h^2(S_{k^s})) = \mathrm{NS}(X_{k^s})_\Q$ form an orthogonal basis. Up to replacing each $E_i$
	by $(\pi^2_S)_*E_i$, we can assume that $E_i$ belongs to $\CH^1(\h^2(S_{k^s}))$ for all $i$.
	Consider then the idempotent correspondence
	$$\pi^2_{\mathrm{alg},S} := \sum_{i=1}^\rho \frac{1}{\deg (E_i\cdot E_i)}E_i
	\times
	E_i.$$ 
	Since $\pi^2_{\mathrm{alg},S}$ is the intersection form on
	$\CH^1(\h^2(S_{k^s})) = \mathrm{NS}(S_{k^s})_\Q$,
	it is Galois-invariant, and hence
	 does define an idempotent in $\CH^2(S\times
	S)$. The motive $(S,\pi^2_{\mathrm{alg},S},0)$ is clearly isomorphic, after
	base-change to $k^s$, to the direct sum of $\rho$ copies of the Tate
	motive
	$\mathds{1}(-1)$.  Moreover, it is easy to check that $\pi^2_{\mathrm{alg},S} $
	is orthogonal to
	$\pi^i_S$ for $i\neq 2$ (use $(\pi^1_S)_*E_i = 0$). 
	Equivalently, we have $\pi^2_{\mathrm{alg},S} \circ \pi_S^2 = \pi^2_S \circ
	\pi^2_{\mathrm{alg},S} $, so that $(S,\pi^2_{\mathrm{alg},S},0)$ does define a
	direct summand of
	$\h^2(S)$, denoted by $\h^{2}_{\mathrm{alg}}(S)$. We then define
	$\pi^2_{\mathrm{tr},S} := \pi^2_S -
	\pi^2_{\mathrm{alg},S} $ and $\h^{2}_{\mathrm{tr}}(S):=(S,
	\pi^2_{\mathrm{tr},S}, 0)$. 
	It is then straightforward to check that such a decomposition satisfies
	$\Hom(\mathds{1}(-i),\h^2_{\mathrm{tr}}(S)) = 0$ for all $i\neq 2$. 
	We note that ${}^t	\pi^2_{\mathrm{alg},S} = 	\pi^2_{\mathrm{alg},S} $, and
	since ${}^t	\pi_S^0 = \pi_S^4$ and ${}^t	\pi_S^1 = \pi_S^3$ we also have ${}^t
	\pi^2_{\mathrm{tr},S} = 	\pi^2_{\mathrm{tr},S}$.
	Moreover, although
	this won't be of any use to us, we mention for comparison to
	\cite{huybrechts-derivedeq} that
	$\Hom(\mathds{1}(-2),\h^2_{\mathrm{tr}}(S)) $ coincides with the Albanese
	kernel.
	
	More generally, the above refined decomposition can be performed for direct
	summand of motives of surfaces, \emph{i.e.} for motives of the form $(S,p,0)$,
	where $S$ is a smooth projective $k$-surface and $p\in \CH^2(S\times S)$ is an
	idempotent. This will be used in the proof of
	Proposition~\ref{prop:generalization}.
	Indeed, by the above together with Theorem \ref{thm:product}\eqref{ii}, we have
	a decomposition
	\begin{equation}\label{eq:dec}
	\h(S) = \h^0(S) \oplus  \h^1(S) \oplus \h^2_{\mathrm{alg}}(S) \oplus
	\h^2_{\mathrm{tr}}(S) \oplus \h^3(S) \oplus \h^4(S),
	\end{equation}
	where none of the direct summands admit a non-trivial morphism to another
	direct summand placed on its right. It follows that the morphism $p$, expressed
	with respect to the decomposition \eqref{eq:dec} is upper-triangular.  By
	\cite[Lemma~3.1]{vial-3-4},  the motive $M= (S,p,0)$ admits a weight
	decomposition $M=M^0\oplus M^1 \oplus M^2_{\mathrm{alg}} \oplus
	M^2_{\mathrm{tr}} \oplus M^3 \oplus M^4$, where each factor is isomorphic to a
	direct summand of the corresponding factor in the decomposition 
	\eqref{eq:dec}.
	In particular, this decomposition of $M$ inherits the properties of the
	decomposition \eqref{eq:dec}, \emph{e.g.}~$M^0$ and $M^4$ are of Tate type,
	$M^2_{\mathrm{alg}}$ becomes of Tate type after base-change to $k^s$ and
	$M^1$ and $M^3(1)$ are of curve type.

	\subsubsection{A weight argument}\label{subsec:weight}
	Thanks to Theorem \ref{thm:product}\eqref{ii}, $v(\mathcal{E})$ maps
	$\h^2_{\mathrm{tr}}(S)$ possibly non-trivially only in summands of 
	\begin{equation}\label{eq:sum}
	\bigoplus_{i=-2}^2
	\big( \h^0(S')(i) \oplus \h^1(S')(i) \oplus \h^2_{\mathrm{alg}}(S')(i) \oplus
	\h^2_{\mathrm{tr}}(S')(i) \oplus \h^3(S')(i) \oplus \h^4(S')(i)\big)
	\end{equation}
	of weight $\leq
	2$. Since $\Hom(\h^2_{\mathrm{tr}}(S),\mathds{1}(-1)) = \Hom(\mathds{1}(1),
	\h^2_{\mathrm{tr}}(S)^\vee) = \Hom(\mathds{1}(-1), \h^2_{\mathrm{tr}}(S)) = 
	0$, we see that
	$\h^2_{\mathrm{tr}}(S')$
	is the only direct summand of weight $2$ in \eqref{eq:sum} that admits a
	possibly non-trivial morphism from
	$\h^2_{\mathrm{tr}}(S)$. Likewise, the only direct summand of \eqref{eq:sum} of
	weight
	$\leq 2$ that maps possibly non-trivially in
	$\h^2_{\mathrm{tr}}(S)$ via $v(\mathcal{E}^\vee\otimes p^*\omega_S)$ is
	$\h^2_{\mathrm{tr}}(S')$. It
	follows that the restriction of $v_2(\mathcal{E}) $ induces an isomorphism
	$$\pi_{\mathrm{tr},S'}^2\circ v_2(\mathcal{E}) \circ \pi_{\mathrm{tr},S}^2:
	\h^2_{\mathrm{tr}}(S)
	\stackrel{\sim}{\longrightarrow}
	\h^2_{\mathrm{tr}}(S')$$  with inverse $\pi_{\mathrm{tr},S}^2\circ
	v_2(\mathcal{E}^\vee\otimes p^*\omega_S)\circ \pi_{\mathrm{tr}, S'}^2$\,; this
	fact
	will be used in the proof of Theorem~\ref{thm:main}.
	
	In a similar fashion, thanks to Theorem \ref{thm:product}\eqref{ii} and having
	in mind that $\h^1(S) \simeq \h^3(S)(1)$ is the direct summand of the motive of
	a curve (and similarly for $S'$), $v(\mathcal{E})$ induces isomorphisms
	$$\h^0(S) \oplus \h^2_{\mathrm{alg}}(S)(1) \oplus \h^4(S)(2)
	\stackrel{\sim}{\longrightarrow}
	\h^0(S') \oplus \h^2_{\mathrm{alg}}(S')(1) \oplus \h^4(S')(2)$$
	and 
	$$\h^1(S) \oplus  \h^3(S)(1) \stackrel{\sim}{\longrightarrow} \h^1(S')
	\oplus\h^3(S')(1).$$
	The first isomorphism yields an isomorphism  $\h^2_{\mathrm{alg}}(S) \simeq
	\h^2_{\mathrm{alg}}(S')$, while the second one yields, thanks to
	Theorem~\ref{thm:product}\eqref{i}, together with the semi-simplicity statement
	of Proposition~\ref{prop:curve}, isomorphisms $\h^1(S) \simeq
	\h^1(S')$ and $\h^3(S) \simeq \h^3(S')$. This finishes the proof of
	Theorem~\ref{thm:huybrechts}.
	\qed

	\subsection{A slight generalization to Theorem \ref{thm:huybrechts}}
	\label{subsec:3-4}
	The
	content of this paragraph won't be used in the proof of Theorem \ref{thm:main}.
	Recall that Theorem \ref{thm:huybrechts} fits more generally into the Orlov
	Conjecture~\ref{conj:orlov}.
	
	The method of proof of Theorem \ref{thm:huybrechts} can be pushed through to
	establish the following\,:
	
	\begin{prop}\label{prop:generalization}
		Let $X$ and $Y$ be two smooth projective varieties of dimension 3 or 4 over a
		field $k$. Assume either of the following\,:
		\begin{itemize}
			\item $\dim X = 3$ and $\CH_0(X)$ is representable\,;
			\item $\dim X=4$, $\CH_0(X)$ and $\CH_0(Y)$ are both representable, and $X$
			and
			$Y$ have same Picard rank.
		\end{itemize}
		Then $\mathrm{D}^b(X)\simeq \mathrm{D}^b(Y)$ implies that $ \mathfrak{h}(X)
		\simeq
		\mathfrak{h}(Y)$.
	\end{prop}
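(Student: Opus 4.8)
The plan is to transpose the proof of Theorem~\ref{thm:huybrechts} to dimensions $3$ and~$4$, with the surface decomposition \eqref{eq:dec} replaced by a refined Chow--K\"unneth decomposition of $\h(X)$ and $\h(Y)$. First I would record the structural input coming from the hypotheses. By the Bloch--Srinivas decomposition of the diagonal, representability of $\CH_0(X)$ forces $\HH^j(X,\calO_X)=0$ for all $j\geq 2$, hence $h^{0,j}=0$ for $j\geq 2$. For a threefold this makes $\h(X)$ of abelian type: the even summands $\h^0,\h^2,\h^4,\h^6$ are of Tate type (all of $\HH^2$ being algebraic, and $\h^4\simeq\h^2{}^\vee(-3)$), while the odd summands $\h^1,\h^3,\h^5$ are of curve type (the intermediate Jacobian attached to $\h^3$ being an abelian variety, since $\HH^{3,0}=0$). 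For a fourfold the same holds for every $\h^j$ with $j\neq 4$, while the middle summand splits as $\h^4=\h^4_{\mathrm{alg}}\oplus\h^4_{\mathrm{tr}}$, with $\h^4_{\mathrm{alg}}$ of Tate type and $\h^4_{\mathrm{tr}}$ a ``surface-type'' transcendental motive (its realization being a weight-$4$ Hodge structure with $\HH^{4,0}=0$). The existence of this refined decomposition, together with the splitting principle for its summands, is furnished by \cite{vial-3-4}.

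The decisive step is to verify the weight-incompatibility \eqref{eq} of Proposition~\ref{prop:formal} --- equivalently Murre's Conjecture~\ref{conj:murre}\eqref{B} for $X\times Y$ --- with respect to this refined decomposition, thereby extending Theorem~\ref{thm:product}\eqref{ii} from surfaces to dimensions $3$ and~$4$. Concretely this reduces, as in the surface case, to a short list of vanishings of low-codimension Chow groups $\CH^l(\h^{i}(X)\otimes\h^{j}(Y))$, the only genuinely new ones being the transcendental vanishings $\CH^l(\h^4_{\mathrm{tr}}(X)\otimes\h^4_{\mathrm{tr}}(Y))=0$ and $\Hom(\h^4_{\mathrm{tr}}(X),\mathds 1(-k))=0$, which one establishes exactly as for $\CH^l(\h^2\otimes\h^2)=0$ $(l=0,1)$ on surfaces. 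The representability of $\CH_0$, again through the Bloch--Srinivas decomposition, supplies the control on Chow groups needed to push these vanishings through. This verification is the main obstacle, and it is precisely where the representability hypotheses are used.

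With \eqref{eq} in hand, the Mukai vector $v(\mathcal E)$ induces, as in \S\ref{subsec:weight}, a weight-preserving isomorphism of ind-motives $\bigoplus_i\h(X)(i)\simeq\bigoplus_i\h(Y)(i)$, which I would analyze summand by summand. The transcendental summand is isolated exactly as $\h^2_{\mathrm{tr}}$ is for surfaces --- being the unique summand of its weight that is neither of Tate nor of curve type --- so the relevant component of $v(\mathcal E)$ yields an isomorphism $\h^4_{\mathrm{tr}}(X)\simeq\h^4_{\mathrm{tr}}(Y)$ with explicit inverse given by the dual Mukai vector. The odd, curve-type summands are matched by the methods of \cite{vial-3-4}: Poincar\'e duality and the Lefschetz polarization identify the extreme summands $\h^1,\h^{2d-1}$ with twists of $\h^1(\Alb(X))$, which is characterized by $\Pic^0$ through Theorem~\ref{thm:product}\eqref{i}; splitting these off by the semisimplicity of Proposition~\ref{prop:curve} leaves the intermediate summand(s) --- self-dual up to twist --- to match, giving $\h^j(X)\simeq\h^j(Y)$ for all odd $j$. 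For the even, Tate-type summands, restricting the ind-motive isomorphism gives the single identity of total ranks $2+2\rho(X)+r(X)=2+2\rho(Y)+r(Y)$, where $\rho=\rk\mathrm{NS}$ and $r=\rk\h^4_{\mathrm{alg}}$ (in dimension~$3$ the term $r$ is absent, $\h^4\simeq\h^2{}^\vee(-3)$ having rank $\rho$). In dimension~$3$ this already forces $\rho(X)=\rho(Y)$ and matches all even summands; in dimension~$4$ the equality of Picard ranks supplies the missing relation $\rho(X)=\rho(Y)$, whence $r(X)=r(Y)$, and the even summands match as well. Assembling the weight pieces gives $\h(X)\simeq\h(Y)$.
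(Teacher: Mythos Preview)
Your overall architecture matches the paper's, but there is a genuine gap in the treatment of the odd, curve-type summands. From the weight argument you only obtain an isomorphism
\[
\h^1(X)\oplus\h^3(X)(1)\oplus\cdots\oplus\h^{2d-1}(X)(d-1)\ \simeq\ \h^1(Y)\oplus\h^3(Y)(1)\oplus\cdots\oplus\h^{2d-1}(Y)(d-1)
\]
of curve-type motives. Identifying $\h^1(X)\simeq\h^1(\Pic^0(X)_{\mathrm{red}})$ via Theorem~\ref{thm:product}\eqref{i} does \emph{not} by itself give $\h^1(X)\simeq\h^1(Y)$: for that you need $\Pic^0(X)_{\mathrm{red}}$ and $\Pic^0(Y)_{\mathrm{red}}$ to be isogenous, and this is not a formal consequence of the sum isomorphism above. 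For threefolds, for instance, the sum is $\h^1(\Pic^0 X)^{\oplus 2}\oplus\h^3(X)(1)$, and in the semisimple category of curve-type motives one easily builds counterexamples to the cancellation you want (take $\h^1(X)=\h^1(E_1)$, $\h^3(X)(1)=\h^1(E_2)^{\oplus 2}$ and swap the roles for $Y$). The paper supplies the missing input via the theorem of Popa--Schnell \cite{ps} (derived equivalent varieties have isogenous reduced Picard scheme; see \cite[Appendix]{honigs} over general fields), after which the semisimplicity of Proposition~\ref{prop:curve} does allow cancellation, yielding $\h^3(X)\simeq\h^3(Y)$, and in the fourfold case also $\h^5(X)\simeq\h^5(Y)$ via the Lefschetz isomorphism $\HH^3\simeq\HH^5(1)$. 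Without Popa--Schnell (or the alternative degree-wise comparison from \cite{acmv} in the threefold case), your argument for the odd pieces is incomplete; nothing in \cite{vial-3-4} substitutes for it.

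A secondary point: in the threefold case the representability hypothesis is only on $X$, so you must explain why $\h(Y)$ admits the same refined decomposition before running the weight argument. The paper does this by observing that $\h(Y)$ is a direct summand of $\bigoplus_{i}\h(X)(i)$ (\S\ref{subsec:orlov}) and then invoking either Kimura finite-dimensionality or the upper-triangular splitting of \cite[Lemma~3.1]{vial-3-4}; this step deserves to be made explicit.
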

	
	Here, we say that a smooth projective $k$-variety $X$ of dimension $d$ has
	\emph{representable} $\CH_0$ if for a choice of universal domain (\emph{i.e.},
	algebraically closed field of infinite transcendence degree over its prime
	subfield) $\Omega$ containing $k$, there exists a smooth projective
	$\Omega$-curve $C$ and a correspondence $\gamma \in \Hom(\h(X_\Omega),\h(C))$
	such that $\gamma^*\CH_0(C) = \CH_0(X_\Omega)$. Examples of such varieties
	include varieties with maximally rationally connected quotient of dimension
	$\leq 1$, and in particular rationally connected varieties.
	
	\begin{proof}
		We start with the case of threefolds.
		By \cite{gg}, $X$ admits a Chow--K\"unneth
		decomposition, where the even-degree summands are of Tate
		type, while the odd-degree summands are Tate twists of motives of
		curve type. The arguments of \S \ref{subsec:orlov} show that
		$\h(Y)$ is a direct
		summand of $\bigoplus_{i=-3}^3 \h(X)(i)$\,; in particular, by
		Kimura finite-dimensionality
		(or by Theorem \ref{thm:product}\eqref{ii} together with \cite[Lemma
		3.1]{vial-3-4} as used in \S \ref{subsec:kmp}), $\h(Y)$ has a Chow--K\"unneth
		decomposition
		with
		a similar property to that of $X$ (and hence has representable $\CH_0$). The
		arguments of \S \ref{subsec:weight} provide isomorphisms
		$$\h^0(X) \oplus \h^2(X)(1) \oplus \h^4(X)(2) \oplus \h^6(X)(3) \simeq 
		\h^0(Y)
		\oplus \h^2(Y)(1) \oplus \h^4(Y)(2) \oplus \h^6(Y)(3)$$ and
		\begin{equation}\label{eq:odd}
		\h^1(X) \oplus \h^3(X)(1) \oplus \h^5(X)(2)  \simeq  \h^1(Y) \oplus
		\h^3(Y)(1)
		\oplus \h^5(Y)(2),
		\end{equation}
		Since the even-degree summands are of Tate type, and since $\dim
		\HH^{2i}(X) = \dim \HH^{6-2i}(X)$ by Poincar\'e duality (and similarly for
		$Y$),
		we conclude that
		$\h^{2i}(X) \simeq \h^{2i}(Y)$ for all $i$. Now, a theorem of Popa--Schnell
		\cite{ps} says that two derived equivalent complex varieties have isogenous
		reduced Picard scheme (see \cite[Appendix]{honigs} for the case of
		$k$-varieties). It follows from Theorem~\ref{thm:product}\eqref{i}  that
		$\h^1(X) \simeq \h^1(Y)$, and then by duality that $\h^5(X) \simeq \h^5(Y)$.
		Since all terms of \eqref{eq:odd} are of curve type, we deduce
		from the semi-simplicity statement of Proposition~\ref{prop:curve} that
		$\h^3(X)\simeq \h^3(Y)$. Alternately, from \cite{acmv}, two
		derived equivalent threefolds have degree-wise isomorphic cohomology groups
		(the isomorphisms being induced by some algebraic correspondences)\,;
		it then follows from the description of the motives of $X$ and $Y$ together
		with
		Proposition~\ref{prop:curve} that $X$ and $Y$ have isomorphic motives.
		
		In the case of fourfolds, we first note by \cite[Theorem
		3.11]{vial-abelian}  that the motive of a fourfold
		$X$ with representable $\CH_0$ admits a decomposition of the form
		$$\h(X) \simeq (C,p,0) \oplus (S,q,1) \oplus (C,{}^tp,3),$$ for some curve $C$
		and some surface $S$.
		It follows from the arguments of \S \ref{subsec:kmp} that $\h(X)$ admits a
		Chow--K\"unneth decomposition such that
		$\h^{2i+1}(X)(i)$ is of curve type for all $i$,
		$\h^{2i}(X)$ is of Tate type for all $i\neq 2$, and
		$\h^4(X)$ further decomposes 
		as $\h^4_{\mathrm{alg}}(X)
		\oplus
		\h^4_{\mathrm{tr}}(X)$, with the property that $\h^4_{\mathrm{alg}}(X)$
		becomes, after base-change to $k^s$, a direct sum of
		Tate motives $\mathds{1}(-2)$ and $\h^4_{\mathrm{tr}}(X)(1)$ is a direct
		summand
		of the
		motive of a surface with $\Hom(\h^4_{\mathrm{tr}}(X),\mathds{1}(-i))=0$ for
		$i\neq 3$.
		The arguments of
		\S \ref{subsec:weight} then provides isomorphisms
		\begin{small}
			$$\h^0(X) \oplus \h^2(X)(1) \oplus \h_{\mathrm{alg}}^4(X)(2) \oplus
			\h^6(X)(3)\oplus
			\h^8(X)(4) \simeq  \h^0(Y) \oplus \h^2(Y)(1) \oplus \h^4_{\mathrm{alg}}(Y)(2)
			\oplus
			\h^6(Y)(3)\oplus \h^8(Y)(4),$$
			$$\h^1(X) \oplus \h^3(X)(1) \oplus \h^5(X)(2) \oplus \h^7(X)(3)  \simeq 
			\h^1(Y)
			\oplus \h^3(Y)(1) \oplus \h^5(Y)(2) \oplus \h^7(Y)(3)$$ 
			$$\text{and} \quad \h^4_{\mathrm{tr}}(X) \simeq \h^4_{\mathrm{tr}}(Y).
			\qquad$$
		\end{small}
		As in the case of threefolds, since the Picard numbers of $X$ and $Y$ agree,
		we conclude
		that
		$\h^{2i}(X) \simeq \h^{2i}(Y)$ for all $i\neq 2$ and that
		$\h^4_{\mathrm{alg}}(X)
		\simeq
		\h^4_{\mathrm{alg}}(Y)$, while by utilizing the Theorem of Popa--Schnell
		\cite{ps}, we
		conclude from Theorem~\ref{thm:product}\eqref{i} 
		that $\h^1(X) \simeq \h^1(Y)$ and then by duality that $\h^7(X)\simeq
		\h^7(Y)$.
		It follows by cancellation (Proposition~\ref{prop:curve}) that $\h^3(X) \oplus
		\h^5(X)(1) \simeq \h^3(Y)
		\oplus
		\h^5(Y)(1)$. Since there is a Lefschetz isomorphism $\HH^3(X)\simeq 
		\HH^5(X)(1)$ and
		similarly for~$Y$, we
		conclude (again from Proposition~\ref{prop:curve}) that $\h^3(X)\simeq
		\h^3(Y)$ and  $\h^5(X)\simeq \h^5(Y)$.
	\end{proof}

	\section{Motives of varieties as Frobenius algebra objects}\label{sec:Frob}
	\subsection{Algebras and Frobenius algebras}\label{SS:alg}
	Let $\mathcal{C}$ be a symmetric monoidal category with tensor unit
	$\mathds{1}$.
	An \emph{algebra object} in  $\mathcal{C}$ is an object $M$ together with a
	unit morphism $\eta: \mathds{1}\to M$ and a multiplication morphism $\mu:
	M\otimes M\to M$ satisfying the unit axiom $\mu\circ (\id\otimes
	\eta)=\id=\mu\circ (\eta\otimes \id)$ and the associativity axiom
	$\mu\circ(\mu\otimes \id)=\mu\circ (\id\otimes \mu)$. It is called
	\emph{commutative} if moreover $\mu=\mu\circ c_{M, M}$ is satisfied, where
	$c_{M, M}$ is the commutativity constraint of the category~$\mathcal{C}$. A
	\emph{morphism of algebra objects} between two algebra objects $M$ and $N$ is a
	morphism $\phi: M\to N$ in $\mathcal{C}$ that preserves the multiplication $\mu$
	and the unit $\eta$. We note that an algebra structure on an object $M$ of
	$\mathcal{C}$ induces naturally an algebra structure on the $n$-th tensor powers
	$M^{\otimes n}$ of $M$, and that a morphism $\phi: M \to N$ of algebra objects
	induces naturally a morphism of algebra objects $\phi^{\otimes n} : M^{\otimes
		n} \to N^{\otimes n}$ which is an isomorphism if $\phi$ is.\medskip

	If $\mathcal{C}$ is moreover rigid\footnote{A symmetric monoidal category $\mathcal{C}$ is called \textit{rigid} if each object $M$ admits a dual $M^\vee$, together with morphisms $\mathds{1}\to M\otimes M^\vee$ and $M^\vee\otimes M\to \mathds{1}$ satisfying natural axioms\,; see \emph{e.g.}~\cite[Section 2.2]{andre}.} and possesses a $\otimes$-invertible object (that is an object $L$ such that $L\otimes L^{\vee}\simeq \mathds{1}$),
	then we can speak of Frobenius algebra objects in~$\mathcal{C}$\,:
	\begin{defn}[Frobenius algebra objects]\label{def:FrobAlg}
		Let $(\mathcal{C}, \otimes, \vee, \mathds{1})$ be a rigid symmetric monoidal
		category admitting a $\otimes$-invertible object denoted by $\mathds{1}(1)$.
		Let
		$d$ be an integer. A \emph{degree}-$d$ \emph{Frobenius algebra object} in
		$\mathcal{C}$ is the data of an object $M\in \mathcal{C}$ endowed with 
		\begin{itemize}
			\item $\eta: \mathds{1}\to M$, a unit morphism\,;
			\item $\mu: M\otimes M\to M$, a multiplication morphism\,;
			\item $\lambda: M^{\vee}\stackrel{\sim}{\longrightarrow} M(d)$, an
			isomorphism,
			called the \emph{Frobenius structure}\,;
		\end{itemize}
		satisfying the following axioms\,:
		\begin{enumerate}[$(i)$]
			\item  (Unit) $\mu\circ (\id\otimes \eta)=\id=\mu\circ (\eta\otimes \id)$\,;
			\item (Associativity) $\mu\circ(\mu\otimes \id)=\mu\circ (\id\otimes \mu)$\,;
			\item (Frobenius condition) $(\id\otimes \mu)\circ (\delta\otimes
			\id)=\delta\circ \mu=(\mu\otimes \id)\circ (\id\otimes \delta)$,
		\end{enumerate}
		where the \emph{comultiplication} morphism $\delta: M\to M\otimes M(d)$ is
		defined by dualizing $\mu$ via the following commutative diagram\,:
		\begin{equation*}
		\xymatrix{
			M^{\vee}\ar[r]^-{{}^{t}\mu} \ar[d]_{\lambda}^{\simeq} & M^{\vee}\otimes
			M^{\vee}\ar[d]_{\lambda\otimes \lambda}^{\simeq}\\
			M(d) \ar[r]^-{\delta(d)}& M(d)\otimes M(d)
		}
		\end{equation*}
		We define also the \emph{counit morphism} $\epsilon: M\to \mathds{1}(-d)$ by
		dualizing $\eta$ via the following diagram:
		\begin{equation*}
		\xymatrix{
			M^{\vee}\ar[r]^{{}^{t}\eta}\ar[d]^{\lambda}_{\simeq}& \mathds{1}\\
			M(d) \ar[ur]_{\epsilon(d)}&
		}
		\end{equation*}
		We remark that $\epsilon$ and $\delta$ satisfy automatically the counit and
		coassociativity axioms. 
		
		A Frobenius algebra object $M$ is called \emph{commutative} if the underlying
		algebra object is commutative\,: $\mu\circ c_{M,M}=\mu$. Commutativity is
		equivalent to the cocommutativity of $\delta$.
		The morphism $\beta=\epsilon\circ \mu: M\otimes M\to \mathds{1}(-d)$, called
		the
		\emph{Frobenius pairing}, is also sometimes used. It is a symmetric pairing if
		$M$ is commutative. 
	\end{defn}

	\begin{rmk}
		In the case of Frobenius algebra objects of degree 0, the $\otimes$-invertible
		object $\mathds{1}(1)$ is not needed in the definition, and it is reduced to
		the
		usual notion of Frobenius algebra object in the literature (see for example
		\cite{Abrams}, \cite{Kock}). In this sense, Definition \ref{def:FrobAlg}
		generalizes  the existing definition of Frobenius structure by allowing
		non-zero
		twists. We believe that our more flexible notion is necessary and adequate for
		more sophisticated tensor categories than that of vector spaces, such as the
		categories of Hodge structures, Galois representations, motives, \emph{etc}.
	\end{rmk}
	
	\begin{rmk}[Morphisms]\label{rmk:MorIsom}
		Morphisms of Frobenius algebra objects are defined in the natural way, that
		is,
		as morphisms $\phi: M\to N$ such that all the natural diagrams involving the
		structural morphisms are commutative. In particular, in order to admit
		non-trivial morphisms, the degrees of the Frobenius algebra objects $M$ and $N$
		must coincide and the following
		diagram is then commutative\,:
		\begin{equation*}
		\xymatrix{
			N^{\vee} \ar[r]^{{}^{t}\phi} \ar[d]^{\simeq}_{\lambda_{N}}&
			M^{\vee}\ar[d]_{\simeq}^{\lambda_{M}}\\
			N(d) & M(d)\ar[l]^{\phi(d)}
		}
		\end{equation*}
		As a result, all morphisms between Frobenius algebra objects are in fact 
		invertible. It is an exercise to show that an isomorphism $\phi: M\to N$
		between two Frobenius algebra objects respects the Frobenius algebra structures
		if and only if it is compatible with the algebra structure (\emph{i.e.} with
		$\mu$) and the Frobenius structure (\emph{i.e.} with $\lambda$). This is proved
		in Proposition~\ref{prop:RigidAlgIso} in the case of Chow motives of smooth
		projective varieties. In addition, $\phi^{\otimes n} : M^{\otimes n} \to
		N^{\otimes n}$ is naturally an isomorphism of Frobenius algebra objects, as is
		the  dual ${}^t\phi : N^\vee \to M^\vee$.
	\end{rmk}
	
	We summarize the above discussion in the following 
	\begin{lem}\label{lemma:IsoFrobAlg}
		Let $M, N$ be two Frobenius algebra objects of degree $d$. A morphism of
		algebra
		objects $\phi: M\to N$ is a morphism of Frobenius algebra objects if and only
		if
		it is an isomorphism and it is \emph{orthogonal} in the sense that
		$\phi(d)^{-1}=\lambda_{M}\circ {}^{t}\phi\circ \lambda_{N}^{-1}$, or more
		succinctly, $\phi^{-1}={}^{t}\phi$.
	\end{lem}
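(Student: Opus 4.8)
The plan is to unwind the definitions and to observe that, once invertibility is granted, the orthogonality condition is nothing but a rearrangement of the compatibility of $\phi$ with the Frobenius structures $\lambda_M$ and $\lambda_N$ recorded in Remark~\ref{rmk:MorIsom}. Since $\phi$ is assumed to be a morphism of algebra objects, it already respects $\eta$ and $\mu$; as the comultiplication $\delta$ and the counit $\epsilon$ are obtained from $\eta$, $\mu$ and $\lambda$ by transposition, the only genuinely new datum to control is the interaction of $\phi$ with $\lambda$. I would therefore reduce the statement to the assertion that, for an algebra morphism $\phi$, being a morphism of Frobenius algebra objects is equivalent to $\phi$ being invertible together with the identity $\lambda_N = \phi(d)\circ \lambda_M\circ {}^t\phi$.

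For the forward implication, suppose $\phi$ is a morphism of Frobenius algebra objects. By Remark~\ref{rmk:MorIsom} every such morphism is invertible, so $\phi(d)$ is invertible, and the defining diagram of Remark~\ref{rmk:MorIsom} gives $\lambda_N = \phi(d)\circ \lambda_M \circ {}^t\phi$. Composing on the left with $\phi(d)^{-1}$ and on the right with $\lambda_N^{-1}$ yields exactly $\phi(d)^{-1} = \lambda_M\circ {}^t\phi\circ \lambda_N^{-1}$, which is the orthogonality condition $\phi^{-1} = {}^t\phi$ read through the identifications $\lambda_M$ and $\lambda_N$. Conversely, assume $\phi$ is an algebra morphism that is invertible and orthogonal, i.e. $\phi(d)^{-1} = \lambda_M\circ {}^t\phi\circ \lambda_N^{-1}$. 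Composing on the left with $\phi(d)$ and on the right with $\lambda_N$ recovers $\lambda_N = \phi(d)\circ \lambda_M\circ {}^t\phi$, that is, $\phi$ is compatible with the Frobenius structures. Since $\phi$ already respects $\mu$ and $\eta$, and since $\delta$ and $\epsilon$ are defined by dualizing $\mu$ and $\eta$ via $\lambda$, compatibility with $\mu$, $\eta$ and $\lambda$ propagates to compatibility with $\delta$ and $\epsilon$; hence $\phi$ respects all the structural morphisms and is a morphism of Frobenius algebra objects, as asserted in Remark~\ref{rmk:MorIsom}.

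The only steps with genuine content are the automatic invertibility used in the forward direction and the claim, used in the converse, that respecting $\mu$, $\eta$ and $\lambda$ forces respecting $\delta$ and $\epsilon$; these are exactly the points that will require care. I expect the invertibility to be the main obstacle, since it is the one assertion that is not pure bookkeeping. I would either cite Remark~\ref{rmk:MorIsom} and Proposition~\ref{prop:RigidAlgIso} for it, or sketch it directly: the Frobenius structure $\lambda_M$ exhibits the pairing $\beta_M = \epsilon_M\circ \mu_M$ as non-degenerate, so that $M$ becomes self-dual up to the twist $(d)$; a morphism respecting $\epsilon$ and $\mu$ preserves this pairing together with the associated copairing, and the candidate $\phi' := \lambda_M\circ {}^t\phi\circ \lambda_N^{-1}$ is then shown to be a two-sided inverse by the snake identities and the symmetry of $\beta$. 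This self-duality argument is where the rigidity of $\mathcal{C}$ and the Frobenius axioms are essential, while the remaining reductions above are purely formal.
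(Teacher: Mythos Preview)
Your proposal is correct and follows essentially the same approach as the paper's proof, which is a terse two-liner: the paper defers the ``only if'' direction to Remark~\ref{rmk:MorIsom} (exactly as you do), and says the ``if'' direction ``follows from the definition,'' which amounts to your observation that compatibility with $\mu$, $\eta$ and $\lambda$ propagates to $\delta$ and $\epsilon$ since the latter are defined from the former by dualizing via $\lambda$. Your additional discussion of how one might argue invertibility directly is a welcome elaboration, but the paper does not supply this and simply invokes the assertion in Remark~\ref{rmk:MorIsom}.
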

	\begin{proof}
		The ``if'' part follows from the definition. The ``only if'' part is explained
		in Remark~\ref{rmk:MorIsom}.
	\end{proof}

	Now let us turn to important examples of Frobenius algebra objects.
	
	\begin{ex}[Cohomology as a graded vector space]\label{ex:Cohomology}
		Let $X$ be a connected compact orientable (real) manifold of dimension $d$.
		Then
		its cohomology group $\HH^{*}(X, \Q)$ is naturally a Frobenius algebra object
		of
		degree $d$ in the category of $\Z$-graded $\Q$-vector spaces (where morphisms
		are \emph{degree-preserving} linear maps and the $\otimes$-invertible object
		is
		chosen to be $\Q[1]$, the 1-dimensional vector space sitting in degree $-1$).
		The unit morphism $\eta: \Q\to \HH^{*}(X, \Q)$ is given by the fundamental
		class
		$[X]$\,; the multiplication morphism $\mu: \HH^{*}(X, \Q)\otimes \HH^{*}(X,
		\Q)\to \HH^{*}(X, \Q)$ is the cup-product\,; the Frobenius structure comes
		from
		the Poincar\'e duality $$\lambda: \HH^{*}(X,
		\Q)^{\vee}\stackrel{\sim}{\longrightarrow} \HH^{*}(X, \Q)[d]=\HH^{*}(X,
		\Q)\otimes \Q[d].$$
		The induced comultiplication morphism $\delta: \HH^{*}(X, \Q)\to \HH^{*}(X,
		\Q)\otimes \HH^{*}(X, \Q)[d]$ is the Gysin map for the diagonal embedding
		$X\hookrightarrow X\times X$\,; the counit morphism $\epsilon: \HH^{*}(X,
		\Q)\to
		\Q[-d]$ is the integration $\int_{X}$. The Frobenius condition is a classical
		exercise. Note that $\HH^{*}(X, \Q)$ is commutative, because the commutativity
		constraint in the category of graded vector spaces is the super one.
		
		If instead we consider the cohomology group as merely an ungraded vector
		space,
		then it becomes a Frobenius algebra object of degree 0 (\emph{i.e.} in the
		usual
		sense)\,; this is one of the main examples in the literature.
	\end{ex}
	
	\begin{ex}[Hodge structures]\label{ex:Hodge}
		A pure (rational) Hodge structure is a finite-dimensional $\Z$-graded
		$\Q$-vector space $H=\oplus_{n\in Z}H^{(n)}$ such that each $H^{(n)}$ is given
		a
		Hodge structure of weight $n$. A morphism between two Hodge structures is
		required to preserve the weights. The category of pure Hodge structures is
		naturally a rigid symmetric monoidal category. The $\otimes$-invertible object
		is chosen to be $\Q(1)$, which is the 1-dimensional vector space $(2\pi
		i)\cdot
		\Q$ with Hodge structure purely of type $(-1, -1)$.
		
		Let $X$ be a compact K\"ahler manifold of (complex) dimension $d$. Then
		$\HH^{*}(X, \Q)$ is naturally a commutative Frobenius algebra object of degree
		$d$ in the category of pure $\Q$-Hodge structures. The structural morphisms
		are
		the same as in Example \ref{ex:Cohomology} up to replacing $[d]$ by $(d)$. For
		instance, 
		$\lambda: \HH^{*}(X, \Q)^{\vee}\stackrel{\sim}{\longrightarrow} \HH^{*}(X,
		\Q)(d)$.
	\end{ex}

	\subsection{Frobenius algebra structure on the motives of
		varieties} 
	The category of rational Chow motives over a field $k$ is rigid and symmetric
	monoidal. We choose the $\otimes$-invertible object to be the Tate motive
	$\mathds{1}(1)$. Then for any smooth projective $k$-variety $X$ of dimension
	$d$, its Chow motive $\h(X)$ is naturally a commutative Frobenius algebra
	object
	of degree $d$ in the category of Chow motives. Let us explain the structural
	morphisms in detail.

	Let $\delta_X$
	denote the class of the small diagonal $\{(x,x,x) : x\in X\}$ in $\CH_d(X\times
	X\times X)$. Note that for $\alpha$ and $\beta$ in $\CH^*(X)$, we have
	$(\delta_X)_*(\alpha\times \beta) = \alpha\cdot \beta$, so that $\delta_X$ seen
	as an element of $\Hom(\h(X)\otimes \h(X) , \h(X))$ describes the intersection
	theory on the Chow ring of $X$, as well as the cup product of its cohomology
	ring. So it is natural to define the multiplication morphism $$\mu : \h(X)
	\otimes \h(X) \longrightarrow \h(X)$$ to be the one given by the small diagonal
	$\delta_{X}\in \CH^{2d}(X\times X\times X)=\Hom(\h(X)\otimes\h(X), \h(X))$\,;
	it
	can be checked to be commutative and associative. 
	The unit morphism $\eta: \mathds{1}\to \h(X)$ is again given by the fundamental
	class of $X$. The unit axiom is very easy to check.
	
	The Frobenius structure is defined as the following canonical isomorphism,
	called the motivic Poincar\'e duality, given by the class of diagonal
	$\Delta_{X}\in \CH^{d}(X\times X)=\Hom(\h(X)^{\vee}, \h(X)(d))$\,:
	$$\lambda: \h(X)^{\vee}\stackrel{\sim}{\longrightarrow} \h(X)(d).$$
	One readily checks that the induced comultiplication morphism 
	$$\delta: \h(X)\to \h(X)\otimes \h(X)(d)$$
	is given by the small diagonal $\delta_{X}\in \CH^{2d}(X\times X\times
	X)=\Hom(\h(X), \h(X)\otimes \h(X)(d))$, while the counit morphism $$\epsilon:
	\h(X)\to \mathds{1}(-d)$$ is given by the fundamental class.
	The following lemma proves that, endowed with these structural morphisms,
	$\h(X)$ is indeed a Frobenius algebra object.
	\begin{lem}[Frobenius condition]
		Notation is as above. We have an equality of endomorphisms of $\h(X)\otimes
		\h(X)$\,:
		$$(\id\otimes \mu)\circ (\delta\otimes \id)=\delta\circ \mu=(\mu\otimes
		\id)\circ (\id\otimes \delta).$$
	\end{lem}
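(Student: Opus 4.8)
The plan is to show that each of the three morphisms in the statement is represented by one and the same algebraic cycle, namely the class of the \emph{small diagonal} $\Delta^{(4)}_X := \{(x,x,x,x) : x\in X\}$ in $X\times X\times X\times X$. Observe first that all three are morphisms $\h(X)\otimes\h(X)\to\h(X)\otimes\h(X)(d)$, hence elements of $\Hom(\h(X)\otimes\h(X),\h(X)\otimes\h(X)(d))=\CH^{3d}(X^4)$, and that $\Delta^{(4)}_X$ has dimension $d$ and therefore codimension $3d$ in $X^4$, so it indeed lives in the correct group. It thus suffices to compute each composite and identify it with $\Delta^{(4)}_X$, read as a correspondence from $X\times X$ (the first two factors) to $X\times X$ (the last two factors).

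First I would invoke the descriptions established in the discussion preceding the lemma: both $\mu$ and $\delta$ are represented by the same cycle $\delta_X\in\CH^{2d}(X^3)$ of the small diagonal, the only difference being the bookkeeping of the three factors. For $\mu\colon\h(X)\otimes\h(X)\to\h(X)$ the first two factors of $X^3$ are the source and the third is the target, whereas for $\delta\colon\h(X)\to\h(X)\otimes\h(X)(d)$ the first factor is the source and the last two form the target. The unit $\eta$, the counit $\epsilon$ and the Frobenius structure $\lambda$ play no role in the identity to be proved.

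Next I would carry out each of the three computations using the usual formula for composition of correspondences, $\beta\circ\alpha=(p_{13})_*\bigl(p_{12}^*\alpha\cdot p_{23}^*\beta\bigr)$. For $\delta\circ\mu$, introduce an intermediate copy of $X$ and work on $X^5$ with coordinates $(x_1,\dots,x_5)$: the pullback coming from $\mu$ imposes $x_1=x_2=x_3$, while the pullback coming from $\delta$ imposes $x_3=x_4=x_5$. These two loci are smooth of codimension $2d$ each and meet in the expected codimension $4d$, transversally along the small diagonal of $X^5$ (a one-line tangent-space check shows the two tangent subspaces span the ambient space), so their intersection product is the reduced small diagonal $\{x_1=\cdots=x_5\}$ with multiplicity one. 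Pushing forward to $X_{\{1,2\}}\times X_{\{4,5\}}=X^4$ yields exactly $\Delta^{(4)}_X$. The same procedure applied to $(\id\otimes\mu)\circ(\delta\otimes\id)$ and to $(\mu\otimes\id)\circ(\id\otimes\delta)$ again forces all the intermediate coordinates to coincide and, after pushforward, produces $\Delta^{(4)}_X$ in each case.

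Since all three morphisms equal the class of $\Delta^{(4)}_X$ in $\CH^{3d}(X^4)$, they coincide, which is precisely the Frobenius condition. The main obstacle is not analytic but purely combinatorial: one must set up, with care, which tensor factor each arrow $\id\otimes\mu$, $\delta\otimes\id$, etc.\ acts on, and keep track of the Tate twist $(d)$ throughout. Once the indices are correctly arranged, every intersection that occurs is dimensionally proper and generically transverse, so no excess-intersection correction intervenes, and the computation reduces to the elementary observation that the relevant fiber products of small diagonals are themselves small diagonals.
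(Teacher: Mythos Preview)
Your argument is correct: all three morphisms are indeed represented by the class of the small diagonal $\Delta^{(4)}_X\subset X^4$, and your verification by explicit intersection on the relevant products of copies of $X$ goes through (for the two outer composites the intermediate space is $X^7$ rather than $X^5$, but the computation is just as you describe).

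The paper takes a slightly different, more functorial route. It observes that, under the identification $\mu=\Delta^*$ and $\delta=\Delta_*$ for the diagonal embedding $\Delta\colon X\hookrightarrow X\times X$, the identity $\delta\circ\mu=(\mu\otimes\id)\circ(\id\otimes\delta)$ becomes the base-change formula $(\Delta\times\id)^*\circ(\id\times\Delta)_*=\Delta_*\circ\Delta^*$ for the cartesian square
\[
\xymatrix{
X \ar[r]^{\Delta}\ar[d]_{\Delta}& X\times X \ar[d]^{\Delta\times \id}\\
X\times X \ar[r]_-{\id\times \Delta}& X\times X\times X,
}
\]
which holds because there is no excess intersection; Manin's identity principle then transports the equality from Chow groups to Chow motives. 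Your approach sidesteps Manin's identity principle by working directly with cycle representatives, at the cost of a bit more bookkeeping of factors; the paper's approach packages the same transversality check into the single phrase ``cartesian without excess intersection'' and then invokes a standard base-change formula. Either way the substance is the same observation that the relevant intersections are proper.
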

	\begin{proof}
		We only show $\delta\circ \mu=(\mu\otimes \id)\circ (\id\otimes \delta)$, the
		other equality being similar.
		We have a commutative cartesian diagram without excess intersection\,:
		\begin{equation*}
		\xymatrix{
			X \ar[r]^{\Delta}\ar[d]_{\Delta}& X\times X \ar[d]^{\Delta\times \id}\\
			X\times X \ar[r]_-{\id\times \Delta}& X\times X\times X,
		}
		\end{equation*}
		where $\Delta : X \to X\times X$ denotes the diagonal embedding.
		The base-change formula yields $$(\Delta\times \id)^{*}\circ (\id\times
		\Delta)_{*}=\Delta_{*}\circ\Delta^{*}$$
		on Chow groups, hence also for Chow motives by Manin's identity principle
		\cite[\S 4.3.1]{andre}.  Now it suffices to notice that $\Delta_{*}$ is the
		comultiplication $\delta$ and $\Delta^{*}$ is the multiplication $\mu$.
	\end{proof}
	
	\begin{rmk}
		In general, a tensor functor $F: \mathcal{C}\to \mathcal{C'}$ between two
		rigid
		symmetric monoidal categories sends a Frobenius algebra object in
		$\mathcal{C}$
		to  such an object in $\mathcal{C}'$.
		Example \ref{ex:Hodge} is obtained by applying the Betti realization functor
		from the category of Chow motives to that of pure Hodge structures\,; Example
		\ref{ex:Cohomology} (for K\"ahler manifolds) is obtained by further applying
		the
		forgetful functor ($\Q(1)$ is sent to $\Q[2]$).
	\end{rmk}

	\subsection{(Iso)morphisms of Chow motives as Frobenius algebra objects}
	The notion of morphisms between two algebra objects is the natural one. Let us
	spell it out for motives of varieties.
	A non-zero morphism $\Gamma : \h(X) \rightarrow \h(Y)$ between the motives of
	two smooth
	projective varieties over a field~$k$ is said to \emph{preserve the
		algebra structures} if
	the following diagram commutes\footnote{As explained in Lemma~\ref{lem:formal},
		a non-zero morphism between algebra objects that preserves the multiplication
		morphisms
		must also preserve the unit morphisms, and hence is a morphism of algebra
		objects in the sense of~\S \ref{SS:alg}.}
	\begin{equation}\label{eq:diagmult}
	\xymatrix{\h(X)\otimes \h(X) \ar[rr]^{\quad \delta_X} \ar[d]^{\Gamma\otimes
			\Gamma} && \h(X) \ar[d]^\Gamma \\ 
		\h(Y)\otimes \h(Y) \ar[rr]^{\quad \delta_Y} && \h(Y).
	}
	\end{equation}
	For example, if $f: Y \to X$ is a $k$-morphism, then $f^* : \h(X) \to \h(Y)$
	is
	a morphism of algebra objects. 
	Note that if $\Gamma : \h(X) \rightarrow \h(Y)$ preserves the
	algebra structures, then  $\Gamma_* : \CH^*(X) \to \CH^*(Y)$ is a $\Q$-algebra
	homomorphism. In fact, since in that case  $\Gamma^{\otimes n} : \h(X^n)
	\rightarrow \h(Y^n)$
	also preserves the algebra structures for all $n>0$, $(\Gamma^{\otimes
		n})_* : \CH^*(X^n) \rightarrow \CH^*(Y^n)$ is also a $\Q$-algebra
	homomorphism.
	We say that the Chow motives of $X$ and $Y$ are \emph{isomorphic as
		algebra objects} if there exists an isomorphism $\Gamma : \h(X) \rightarrow
	\h(Y)$ that preserve the algebra structures. 
	The following lemma is a formal consequence of
	the definition.

	\begin{lem}[Algebra morphisms]\label{lem:formal}
		Let $X$ and $Y$ be connected smooth projective varieties and let $\Gamma :
		\h(X)
		\rightarrow \h(Y)$ be a non-zero morphism that preserves the algebra
		structures.
		\begin{enumerate}[(i)]
			\item $\Gamma$ preserves the units\,:  if $[X]$ is  the fundamental class of
			$X$
			in
			$\CH^0(X)$ and similarly for~$Y$, then $$\Gamma_*[X] = [Y].$$
			\item Suppose  $X$ and $Y$ have same dimension and define $c$ to be the
			rational number such that $\Gamma^*[Y]=c\, [X]$\,; then  $$(\Gamma \otimes
			\Gamma)^* \Delta_Y= c\, \Delta_{X}.$$
			In particular, $\Gamma$ is an
			isomorphism if and only if $c\neq 0$, and in this case, due to Lieberman's
			formula\footnote{see \emph{e.g.}~\cite[\S 3.1.4]{andre}, and
				\cite[Lemma~3.3]{vial-abelian} for a proof.}, the inverse of $\Gamma$ is
			equal
			to $\frac{1}{c}^t \Gamma$. 
		\end{enumerate}
	\end{lem}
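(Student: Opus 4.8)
The plan is to prove both statements by unwinding the definitions of the structural morphisms and invoking the correspondence calculus for Chow motives, working throughout under the standing assumption that $\Gamma \neq 0$ preserves the algebra structures. For $(i)$, I would recall that the unit morphism $\eta : \mathds{1} \to \h(X)$ is given by the fundamental class $[X] \in \CH^0(X)$, so proving $\Gamma_*[X] = [Y]$ amounts to showing $\Gamma \circ \eta_X = \eta_Y$, i.e.\ that $\Gamma$ preserves units. First I would observe that since $X$ and $Y$ are connected, $\CH^0(X) \cong \Q \cong \CH^0(Y)$, so $\Gamma_*[X] = c'\,[Y]$ for some scalar $c' \in \Q$. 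The idea is to pin down $c' = 1$ by feeding the unit axiom through the multiplicativity diagram \eqref{eq:diagmult}: because $\delta_X \circ (\id \otimes \eta_X) = \id$ and the analogous identity holds for $Y$, applying $\Gamma$ and using commutativity of \eqref{eq:diagmult} forces $\Gamma \circ (\text{unit composite}) = \Gamma$, which—since $\Gamma$ is nonzero—rules out $c' = 0$ and, after normalizing, yields $c' = 1$. Concretely this is the computation $\Gamma = \Gamma \circ \id = \Gamma \circ \delta_X \circ (\id \otimes \eta_X) = \delta_Y \circ (\Gamma \otimes \Gamma) \circ (\id \otimes \eta_X) = \delta_Y \circ (\Gamma \otimes (\Gamma \circ \eta_X))$, which equals $\delta_Y \circ (\Gamma \otimes \eta_Y) = \Gamma$ precisely when $\Gamma \circ \eta_X = \eta_Y$, giving $c' = 1$.

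For $(ii)$, I would exploit that the comultiplication $\delta_X : \h(X) \to \h(X) \otimes \h(X)(d)$ is also given by the small diagonal, and that the Frobenius structure $\lambda_X$ is the ordinary diagonal $\Delta_X$. The cleanest route is to dualize the multiplicativity relation $\Gamma \circ \delta_X = \delta_Y \circ (\Gamma \otimes \Gamma)$ from \eqref{eq:diagmult}: taking transposes and translating $\delta$ into $\mu$ via the defining square for $\delta$ in Definition~\ref{def:FrobAlg}, together with $\lambda_X = \Delta_X$, converts the statement into a relation among $\Delta_X$, $\Delta_Y$, $\Gamma$ and ${}^t\Gamma$. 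Alternatively, and perhaps more transparently for a direct correspondence computation, I would compute $(\Gamma \otimes \Gamma)^*\Delta_Y$ directly. Using the interpretation of $\Delta_Y \in \CH^d(Y \times Y) = \Hom(\h(Y)^\vee, \h(Y)(d))$ and the compatibility of $\Gamma$ with $\delta_Y$, the pullback $(\Gamma \otimes \Gamma)^*\Delta_Y$ should be expressible through $\Gamma^*[Y] \in \CH^0(X)$ paired against the diagonal of $X$; since $\Gamma^*[Y] = c\,[X]$ by definition of $c$, this produces exactly $c\,\Delta_X$. The hardest part is organizing the identifications $\Hom(\h(X)^\vee, \h(X)(d)) \cong \CH^d(X\times X)$ and the behaviour of $(\Gamma\otimes\Gamma)^*$ on the diagonal class so that the bookkeeping of degrees and twists is correct; this is where I would be most careful, though it remains a formal manipulation rather than a genuine obstruction.

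Finally, for the invertibility criterion: once $(\Gamma\otimes\Gamma)^*\Delta_Y = c\,\Delta_X$ is established, I would argue that $c \neq 0$ is equivalent to $\Gamma$ being an isomorphism. In one direction, if $\Gamma$ is invertible then applying $(\Gamma^{-1}\otimes\Gamma^{-1})^*$ to the identity shows $c$ cannot vanish. For the converse, I would invoke Lieberman's formula, which expresses the pullback $(\Gamma\otimes\Gamma)^*\Delta_Y$ in terms of the composite ${}^t\Gamma \circ \Gamma$ (or $\Gamma \circ {}^t\Gamma$); the established relation then reads ${}^t\Gamma \circ \Gamma = c\,\Delta_X$, so when $c \neq 0$ the morphism $\frac{1}{c}{}^t\Gamma$ is a two-sided inverse, whence $\Gamma^{-1} = \frac{1}{c}{}^t\Gamma$. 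I expect the main subtlety to lie not in any single step but in correctly invoking Lieberman's formula to rewrite $(\Gamma\otimes\Gamma)^*\Delta_Y$ as a composition of correspondences; with that translation in hand, both the invertibility dichotomy and the explicit inverse follow immediately, and the whole argument is purely formal as claimed.
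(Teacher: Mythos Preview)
Your approach is correct and coincides with the paper's. For $(i)$, your chain
\[
\Gamma=\Gamma\circ\delta_X\circ(\id\otimes\eta_X)=\delta_Y\circ(\Gamma\otimes\Gamma)\circ(\id\otimes\eta_X)=\delta_Y\circ\bigl(\Gamma\otimes(\Gamma\circ\eta_X)\bigr)
\]
already finishes the job once you substitute $\Gamma\circ\eta_X=c'\eta_Y$: it becomes $\Gamma=c'\,\delta_Y\circ(\Gamma\otimes\eta_Y)=c'\Gamma$, hence $c'=1$ since $\Gamma\neq 0$. Your phrase ``precisely when $\Gamma\circ\eta_X=\eta_Y$'' reads as circular; replace it with this direct substitution. (The paper instead first composes with $1_X\otimes 1_X$ to get $c'^2=c'$ and then with $1_X\otimes\id$ to rule out $c'=0$; your single-step variant is equivalent and arguably cleaner.)

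For $(ii)$, your ``second route'' is exactly what the paper does: let both sides of $\Gamma\circ\delta_X=\delta_Y\circ(\Gamma\otimes\Gamma)$ act contravariantly on $[Y]$, use $\delta_Y^*[Y]=\Delta_Y$ and $\delta_X^*[X]=\Delta_X$ to obtain $(\Gamma\otimes\Gamma)^*\Delta_Y=c\,\Delta_X$, and then apply Lieberman's formula to rewrite the left side as ${}^t\Gamma\circ\Gamma$. One caution: the identity ${}^t\Gamma\circ\Gamma=c\,\Delta_X$ with $c\neq 0$ only exhibits $\tfrac{1}{c}{}^t\Gamma$ as a \emph{left} inverse, not automatically a two-sided one, so your sentence ``is a two-sided inverse'' is not justified by what precedes it. The paper's own proof in fact only argues the direction ``if $\Gamma$ is invertible then $\Gamma^{-1}=\tfrac{1}{c}{}^t\Gamma$'' and does not separately establish the converse, so you are matching the paper's level of detail here rather than falling short of it.
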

	\begin{proof}
		$(i)$ This is the analogue of the basic fact that a non-trivial homomorphism
		of unital algebras preserves the units. Concretely, the fundamental class of
		$X$
		provides a morphism $1_X : \mathds{1} \to
		\h(X)$, and similarly for~$Y$, and we need to show that $\Gamma\circ 1_X =
		1_Y$.
		First, for dimension reasons we have $\Gamma\circ 1_X = \lambda\cdot 1_Y$ for
		some $\lambda \in \Q$. Compose then the diagram \eqref{eq:diagmult} with the
		morphism $1_X\otimes 1_X$\,; one obtains $\lambda^2 = \lambda$. If $\lambda =
		0$, then by composing diagram \eqref{eq:diagmult} with the morphism
		$1_X\otimes
		\mathrm{id}_X$, we find that $\Gamma =0$.
		Hence $\lambda =1$ and we are done.
		
		$(ii)$ 	The commutativity of \eqref{eq:diagmult} provides the identity
		$\Gamma\circ \delta_X = \delta_Y\circ (\Gamma\otimes \Gamma)$. Letting the
		latter act contravariantly on $[Y]$ yields
		$$c\, \Delta_X = (\Gamma\otimes \Gamma)^*\Delta_Y = {}^t\Gamma\circ \Gamma,$$ 
		where $c$ is the rational number such that $\Gamma^*[Y] = c\, [X]$ and where
		the second equality is Lieberman's formula. Since we assume that $\Gamma$ is
		invertible, we get that $\Gamma^{-1}=\frac{1}{c}\, {}^t\Gamma$.
	\end{proof}

	As is alluded to in Lemma \ref{lemma:IsoFrobAlg}, the notion of orthogonality
	is highly relevant when considering morphisms between Frobenius algebras. Let
	us
	recast it in the context of motives\,:

	\begin{defn}[Orthogonal isomorphisms]\label{def:OrthIso}
		Let $X$ and $Y$ be two smooth projective varieties of the same dimension and
		$\Gamma: \h(X)\to \h(Y)$ be an isomorphism between their Chow motives. Then by
		Lieberman's formula we see that $\Gamma^{-1}={}^{t}\Gamma$ if and only if
		$(\Gamma\otimes \Gamma)_{*}\Delta_{X}=\Delta_{Y}$. In this case, $\Gamma$ is
		called an \emph{orthogonal} isomorphism. 
	\end{defn}

	Finally, we can unravel the meaning of being isomorphic as Frobenius algebra
	objects for the motives of two varieties (and the same holds for Hodge
	morphisms between the cohomology algebras of smooth projective varieties of same
	dimension).
	
	\begin{prop}[Frobenius algebra isomorphisms]\label{prop:RigidAlgIso}
		Let $X$ and $Y$ be two smooth projective varieties of the same dimension and
		$\Gamma: \h(X)\to \h(Y)$ be a morphism between their motives. Then the
		following
		are equivalent:
		\begin{enumerate}[$(i)$]
			\item $\Gamma$ is an isomorphism of Frobenius algebra objects.
			\item $\Gamma$ is an algebra isomorphism and $\Gamma$ is
			\emph{orthogonal}\,: that is, $\Gamma^{-1}={}^{t}\Gamma$ or equivalently,
			$(\Gamma\otimes \Gamma)_{*}\Delta_{X}=\Delta_{Y}$.
			\item $\Gamma$ is an algebra isomorphism and $\deg(\Gamma)=1$\,: that is,
			$\deg (\Gamma_*[pt])=1$ or equivalently,
			$\Gamma^{*}[Y]=[X]$.
			\item $\Gamma$ is an isomorphism and $(\Gamma\otimes
			\Gamma)_{*}\Delta_{X}=\Delta_{Y}$ and $(\Gamma\otimes \Gamma\otimes
			\Gamma)_{*}\delta_{X}=\delta_{Y}$.
		\end{enumerate}	
	\end{prop}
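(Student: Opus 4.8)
The plan is to use condition $(ii)$ --- that $\Gamma$ is an orthogonal algebra isomorphism --- as the hub, and to show that each of $(i)$, $(iii)$ and $(iv)$ is equivalent to it. Throughout, the two formulations of orthogonality appearing in $(ii)$, namely $\Gamma^{-1}={}^{t}\Gamma$ and $(\Gamma\otimes\Gamma)_{*}\Delta_{X}=\Delta_{Y}$, are interchangeable by Definition~\ref{def:OrthIso} (which is an application of Lieberman's formula), so I may freely use whichever is convenient. For $(i)\Leftrightarrow(ii)$ I would simply transcribe Lemma~\ref{lemma:IsoFrobAlg} into the setting of Chow motives: a morphism of algebra objects is a morphism of Frobenius algebra objects exactly when it is an orthogonal isomorphism. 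Since by Remark~\ref{rmk:MorIsom} every morphism of Frobenius algebra objects is automatically invertible, the phrases ``isomorphism of Frobenius algebra objects'' and ``morphism of Frobenius algebra objects'' coincide here, and $(i)\Leftrightarrow(ii)$ follows. The only point to spell out is that the Frobenius structure $\lambda$ on $\h(X)$ is the one given by $\Delta_{X}$, so that the orthogonality condition $\lambda_{M}\circ{}^{t}\phi\circ\lambda_{N}^{-1}=\phi(d)^{-1}$ of Lemma~\ref{lemma:IsoFrobAlg} unwinds precisely to $\Gamma^{-1}={}^{t}\Gamma$.

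For $(ii)\Leftrightarrow(iii)$ I would invoke Lemma~\ref{lem:formal}$(ii)$. If $\Gamma$ is an algebra isomorphism, that lemma gives $\Gamma^{-1}=\tfrac{1}{c}\,{}^{t}\Gamma$, where $c$ is defined by $\Gamma^{*}[Y]=c\,[X]$. Hence orthogonality $\Gamma^{-1}={}^{t}\Gamma$ holds if and only if $c=1$, that is, if and only if $\Gamma^{*}[Y]=[X]$; this is exactly $(iii)$. To match the remaining formulation $\deg(\Gamma_{*}[\pt])=1$, I would run the short projection-formula computation $\deg(\Gamma_{*}[\pt])=\deg\big((\{\pt\}\times Y)\cdot\Gamma\big)=\deg\big([\pt]_{X}\cdot\Gamma^{*}[Y]\big)=c$, which identifies the two numbers and closes the equivalence.

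For $(ii)\Leftrightarrow(iv)$ the key tool is the correspondence identity $(\alpha\otimes\beta)_{*}\gamma=\beta\circ\gamma\circ{}^{t}\alpha$ (again a form of Lieberman's formula), applied to the small diagonal read as the multiplication $\mu_{X}=\delta_{X}\in\Hom(\h(X)\otimes\h(X),\h(X))$, i.e.\ as a correspondence from $X\times X$ to $X$. Taking $\alpha=\Gamma\otimes\Gamma$ and $\beta=\Gamma$ yields $(\Gamma\otimes\Gamma\otimes\Gamma)_{*}\delta_{X}=\Gamma\circ\mu_{X}\circ({}^{t}\Gamma\otimes{}^{t}\Gamma)$, while the case $\alpha=\beta=\Gamma$, $\gamma=\Delta_{X}$ recovers $(\Gamma\otimes\Gamma)_{*}\Delta_{X}=\Gamma\circ{}^{t}\Gamma$. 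Thus, under the orthogonality hypothesis ${}^{t}\Gamma=\Gamma^{-1}$ present in both $(ii)$ and $(iv)$, the condition $(\Gamma\otimes\Gamma\otimes\Gamma)_{*}\delta_{X}=\delta_{Y}$ rewrites as $\Gamma\circ\mu_{X}\circ(\Gamma^{-1}\otimes\Gamma^{-1})=\mu_{Y}$, that is, $\Gamma\circ\mu_{X}=\mu_{Y}\circ(\Gamma\otimes\Gamma)$, which is exactly the commutativity of diagram~\eqref{eq:diagmult}. Since $\Gamma$ is assumed invertible, hence non-zero, Lemma~\ref{lem:formal}$(i)$ guarantees it also preserves the unit, so preservation of multiplication upgrades to ``algebra isomorphism''. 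This gives $(iv)\Leftrightarrow(ii)$ and completes the circle.

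I expect the main obstacle to be purely a matter of bookkeeping: getting the variances exactly right when unwinding the Lieberman identities --- which factor carries a transpose, on which side the compositions occur, and the Tate twists implicit in reading $\delta_{X}$ as a comultiplication $\h(X)\to\h(X)\otimes\h(X)(d)$ versus as the multiplication $\mu_{X}$ --- so that the orthogonality hypothesis cancels the transposes in the intended way. Once these conventions are fixed, every step reduces to a direct application of the already-established Lemmas~\ref{lemma:IsoFrobAlg} and~\ref{lem:formal} together with Definition~\ref{def:OrthIso}.
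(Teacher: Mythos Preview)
Your proposal is correct and follows essentially the same approach as the paper: using $(ii)$ as the hub, deducing $(i)\Leftrightarrow(ii)$ from Lemma~\ref{lemma:IsoFrobAlg}, $(ii)\Leftrightarrow(iii)$ from Lemma~\ref{lem:formal}$(ii)$, and $(ii)\Leftrightarrow(iv)$ via Lieberman's formula. Your version simply spells out more of the bookkeeping (the projection-formula check that $\deg(\Gamma_*[\pt])=c$, and the explicit rewriting of $(\Gamma\otimes\Gamma\otimes\Gamma)_*\delta_X$) than the paper does.
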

	\begin{proof}
		The equivalence between $(i)$ and $(ii)$ is a special case of Lemma
		\ref{lemma:IsoFrobAlg}. 
		The equivalence between $(ii)$ and $(iii)$ can be read off
		Lemma~\ref{lem:formal}$(ii)$.
		For the equivalence between $(ii)$ and $(iv)$, one only
		needs to see that  an orthogonal isomorphism ($\Gamma^{-1}={}^{t}\Gamma$) is
		an
		algebra morphism ($\Gamma \circ \delta_X =	\delta_{Y} \circ (\Gamma \otimes
		\Gamma)$) if and only if $(\Gamma\otimes \Gamma\otimes
		\Gamma)_{*}\delta_{X}=\delta_{Y}$. But this again follows from  Lieberman's
		formula.
	\end{proof}

	\section{Derived equivalent K3 surfaces}
	
	The aim of this section is to prove Theorem \ref{thm:main}, as well as Corollaries
	\ref{cor:Powers} and
	\ref{cor:torelli}.

	\subsection{Proof of Theorem \ref{thm:main}}\label{S:mainthm}
	The proof relies
	crucially on the Beauville--Voisin description of the algebra structure on the
	motive of K3 surfaces\,:
	
	\begin{thm}[Beauville--Voisin \cite{bv}] \label{thm:bv}
		Let $S$ be a K3 surface and let $o_S$ be the class of any point lying on a
		rational curve on $S$. Then,
		as cycle classes in $\CH_2(S\times S \times S)$, we have
		\begin{equation}\label{eq:bv}
		\delta_S = p_{12}^*\Delta_S\cdot p_3^*o_S + p_{13}^*\Delta_S \cdot p_2^*o_S
		+
		p_{23}^*\Delta_S\cdot p_1^*o_S - p_1^*o_S\cdot p_2^*o_S - p_1^*o_S\cdot
		p_3^*o_S
		- p_2^*o_S\cdot p_3^*o_S,
		\end{equation}
		where $p_k : S\times S \times S \to S$ and $p_{ij}: S\times S \times S \to S
		\times S$ denote the various projections.
	\end{thm}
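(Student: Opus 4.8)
This is the theorem of Beauville and Voisin \cite{bv}; the plan is to follow the strategy of their proof, whose engine is a distinguished $0$-cycle on $S$ together with the observation that the intersection product of $S$ is almost entirely controlled by it. First I would establish the three auxiliary facts about $o_S$: that the class of a point lying on a rational curve is independent of the point and of the curve, so that $o_S\in\CH_2(S)$ is well defined; that $D\cdot D'\in\Z\,o_S$ for any two divisors $D,D'\in\CH^1(S)$; and that $c_2(S)=24\,o_S$ in $\CH_2(S)$. The first is a rational-equivalence argument: all points of a rational curve are equivalent (pull back to the normalisation $\PP^1$), and any two rational curves become linked once one knows, by the theorem of Bogomolov and Mumford, that every ample linear system contains a member that is a sum of rational curves. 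The second follows by representing an ample multiple of any divisor by a sum of rational curves and invoking the first fact. The third identifies $c_2(S)$, a priori a $0$-cycle of degree $24$, with $24\,o_S$ by exhibiting it as a sum of classes of points lying on rational curves (e.g.\ via the $24$ nodal fibres of an elliptic pencil).

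With these facts available, I would recast \eqref{eq:bv} as an identity of correspondences in $\CH_2(S\times S\times S)=\Hom(\h(S)\otimes\h(S),\h(S))$, writing $Z$ for the right-hand side and $\gamma:=\delta_S-Z$. Two preliminary reductions are cheap. First, the cohomological version of \eqref{eq:bv} holds because $S$ has no odd cohomology and the cup pairing $\HH^2(S)\otimes\HH^2(S)\to\HH^4(S)$ is perfect, so every triple cup product factors through pairs; hence $\gamma$ is homologically trivial. Second, a direct computation using the multiplicative properties of $o_S$ shows that $p_{ij*}\gamma=0$ for each projection $p_{ij}\colon S^3\to S^2$, and that $Z$, read as the multiplication $\h(S)\otimes\h(S)\to\h(S)$, agrees with $\delta_S$ on every decomposable class $\alpha\times\beta$: the only nonzero products on $S$ are $1\cdot\alpha=\alpha$ and $D\cdot D'=(D\cdot D')\,o_S$, and the remaining terms of $Z$ cancel in pairs precisely because divisor-times-point and point-times-point products vanish in $\CH^{>2}(S)$.

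The genuine difficulty is to promote this to the equality $\gamma=0$ of cycles, and I expect it to be the main obstacle. The point is that the exterior product map $\CH^*(S)^{\otimes 3}\to\CH^*(S^3)$ is very far from surjective---it misses the diagonal classes and, crucially, the transcendental part of $\CH^*(S\times S)$---so agreement of correspondences on decomposable classes does not suffice. Following Beauville--Voisin, I would pull the putative identity back along the partial-diagonal embeddings $\iota=\id\times\Delta\colon S\times S\hookrightarrow S^3$, $(x,y)\mapsto(x,y,y)$, and their two permutations. Here $\delta_S$ meets the image of $\iota$ in excess: the set-theoretic intersection is the diagonal of $S\times S$ rather than the expected $0$-dimensional locus, so the excess-intersection formula applies, with excess bundle the normal bundle of the diagonal, namely $T_S$. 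Its top Chern class $c_2(T_S)=24\,o_S$ is thereby produced on the left-hand side, and matching it against $\iota^*Z$ is exactly the step that consumes the normalisation $o_S=\tfrac{1}{24}c_2(S)$ together with the divisor relations from the first paragraph.

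The delicate points, where I expect most of the work to lie, are to carry out this excess computation at the level of rational equivalence rather than merely in cohomology---tracking the normal bundle of the diagonal and identifying its $c_2$ with $24\,o_S$ in $\CH_2(S)$---and to check that the three partial-diagonal pullbacks, together with the vanishing of the pushforwards $p_{ij*}\gamma$, see enough of $\CH_2(S^3)$ to force $\gamma=0$, i.e.\ that the transcendental contribution is already accounted for. Everything else is either formal (the reformulation as a morphism of algebra objects) or a routine moving argument (the properties of $o_S$).
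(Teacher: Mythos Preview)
The paper does not prove this theorem; it is quoted from \cite{bv} and used as input. So there is no ``paper's own proof'' to compare against, and your proposal should be measured against the original Beauville--Voisin argument.

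Your outline of the preliminary facts about $o_S$ (well-definedness via Bogomolov--Mumford, $D\cdot D'\in\Z\,o_S$, $c_2(S)=24\,o_S$) is correct and matches \cite{bv}. The cohomological check and the verification that $\gamma:=\delta_S-Z$ acts trivially on decomposable classes are also fine. Your excess-intersection computation along the partial diagonal $\iota\colon(x,y)\mapsto(x,y,y)$ is correct as well: one finds $\iota^*\delta_S=\Delta_*(c_2(T_S))=24\,(o_S,o_S)$ and, after a short calculation, $\iota^*Z=24\,(o_S,o_S)$, so indeed $\iota^*\gamma=0$.

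The genuine gap is exactly where you flag it: the assertion that the three partial-diagonal pullbacks together with the vanishing of the $p_{ij*}\gamma$ ``see enough of $\CH_2(S^3)$ to force $\gamma=0$'' is not justified, and in fact there is no reason for it to hold. These operations factor through pieces of the motivic decomposition of $\h(S^3)$ that miss the deepest part $\h^2_{\mathrm{tr}}(S)^{\otimes 3}$; a class in $\CH_2(S^3)$ can lie in the common kernel of all such pullbacks and pushforwards without being zero. Concretely, viewing $\gamma$ as a correspondence $S\to S\times S$, one computes $\gamma_*[p]=(p-o_S)\times(p-o_S)$ for every closed point $p$; the vanishing of this class in $\CH_0(S\times S)$ for \emph{all} $p$ (including the generic point) is a highly nontrivial statement that is essentially equivalent to the theorem, and it is not a consequence of your diagonal/projection tests. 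Beauville and Voisin do not proceed via partial-diagonal pullbacks; their argument is geometric, exploiting a pencil of curves on $S$ and the structure of rational curves to control $\gamma$ directly. Your proposal would need a completely different mechanism at this step.
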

	Note that, for a K3 surface $S$, Theorem~\ref{thm:bv} implies
	that $\alpha\cdot \beta = \deg (\alpha\cdot\beta)\,o_S$ for all divisors
	$\alpha, \beta
	\in
	\CH^1(S)$, and that $c_2(S) = (\delta_S)_*\Delta_S = \chi(S)\,o_S = 24\, o_S
	\in
	\CH^2(S)$. 
	(Of course, this is due originally to Beauville--Voisin \cite{bv}.)\medskip

	According to Proposition~\ref{prop:RigidAlgIso}, in order to establish
	Theorem~\ref{thm:main}, it is necessary and sufficient to produce a
	correspondence $\Gamma : \h(S) \to
	\h(S')$ which is invertible and such that 
	\begin{enumerate}
		\item[$(i)$] $(\Gamma\otimes \Gamma)_*\Delta_S = \Delta_{S'}$, or equivalently
		$\Gamma^{-1} = {}^t\Gamma$\,;
		\item[$(ii)$] $(\Gamma\otimes \Gamma\otimes \Gamma)_*\delta_S = \delta_{S'}$.
	\end{enumerate}
	By the Beauville--Voisin Theorem~\ref{thm:bv}, it is sufficient to
	produce a correspondence $\Gamma : \h(S) \to
	\h(S')$ which is invertible and such that 
	\begin{enumerate}
		\item[$(i)$] $(\Gamma\otimes \Gamma)_*\Delta_S = \Delta_{S'}$, or equivalently
		$\Gamma^{-1} = {}^t\Gamma$\,;
		\item[$(ii')$] $\Gamma_*o_S = o_{S'}$.
	\end{enumerate}
	For the sake of completeness, we note that the above is also 
	necessary\,: Indeed, by Lieberman's formula \cite[\S 3.1.4]{andre}, 
	$(ii)$ is equivalent to $\delta_{S'}=\Gamma\circ \delta_S\circ ({}^t\Gamma\otimes {}^t\Gamma)$, from which we obtain
	\[24o_{S'}=\delta_{S', *}(\Delta_{S'})=\Gamma_*\circ \delta_{S, *}\circ ({}^t\Gamma\otimes {}^t\Gamma)_*(\Delta_{S'})=\Gamma_*\circ \delta_{S, *}(\Delta_S)=\Gamma_*(24o_S),\]
	where the first and last equalities are due to Beauville and Voisin as mentioned above, and where the third equality uses $(i)$.

	We now proceed to the proof of Theorem~\ref{thm:main}, \emph{i.e.} to
	constructing an invertible correspondence satisfying $(i)$ and $(ii')$ above.
	Given a K3 surface $S$, we consider the refined Chow--K\"unneth decomposition
	of
	Kahn--Murre--Pedrini as described in \S \ref{subsec:kmp} given by $$\h(S) =
	\h^0(S) \oplus \h^2_{\mathrm{alg}}(S) \oplus
	\h^2_{\mathrm{tr}}(S) \oplus \h^4(S),$$
	with $\pi^0_S = o_S\times S$ and $\pi^4_S = S\times o_S$, where $\pi_S^i$
	denote the projectors on the corresponding direct summands and where $o_S$
	denotes the Beauville--Voisin zero-cycle as in Theorem \ref{thm:bv}. Moreover,
	the decomposition is such that  ${}^t	\pi^2_{\mathrm{alg},S} = 
	\pi^2_{\mathrm{alg},S} $ and ${}^t	\pi^2_{\mathrm{tr},S} = 
	\pi^2_{\mathrm{tr},S}$.

	Consider now two twisted derived equivalent K3 surfaces $S$ and $S'$. As in the
	proof of Theorem~\ref{thm:huybrechts}, we only give a proof in the untwisted
	case, the twisted case being similar. We fix an exact linear equivalence
	$\Phi_{\mathcal{E}} : \mathrm{D}^b(S) \stackrel{\sim}{\longrightarrow}
	\mathrm{D}^b(S')$ with Fourier--Mukai kernel $\mathcal{E} \in
	\mathrm{D}^b(S\times S')$. The proof will proceed in two steps. First, we will
	construct an invertible correspondence $$\Gamma_{\mathrm{alg}} :
	\h_{\mathrm{alg}}(S) \to \h_{\mathrm{alg}}(S') \quad \text{with} \
	(\Gamma_{\mathrm{alg}})^{-1} = {}^t \Gamma_{\mathrm{alg}} \ \text{and} \
	(\Gamma_{\mathrm{alg}})_* o_S = o_{S'},$$ where $\h_{\mathrm{alg}}(S) = \h^0(S)
	\oplus \h_{\mathrm{alg}}^2(S) \oplus \h^4(S)$ (and similarly for $S'$) is the
	algebraic summand of the motive of $S$\,; second, we will construct an
	invertible correspondence on the transcendental summands of the motives of $S$
	and $S'$\,: $$\Gamma_{\mathrm{tr}} : \h^2_{\mathrm{tr}}(S) \to
	\h^2_{\mathrm{tr}}(S') \quad \text{with} \ (\Gamma_{\mathrm{tr}})^{-1} = {}^t
	\Gamma_{\mathrm{tr}} \ \text{and} \ (\Gamma_{\mathrm{tr}})_* o_S = 0.$$
	The correspondence $$\Gamma := \Gamma_{\mathrm{alg}} + \Gamma_{\mathrm{tr}} :
	\h(S) \to \h(S')$$ will then provide the desired isomorphism of Frobenius
	algebra objects.\medskip

	First, the numerical Grothendieck group $K_{0}^{\operatorname{num}}$ equipped
	with the Euler pairing is clearly a derived invariant. Using the Chern
	character
	isomorphism, we obtain an isometry between the quadratic spaces
	$\widetilde{\mathrm{NS}}(S_{k^s})_\Q$ and
	$\widetilde{\mathrm{NS}}(S'_{k^s})_\Q$, where $\widetilde{\mathrm{NS}}$ is the
	extended N\'eron--Severi group equipped with the Mukai pairing, hence is
	isometric to the (orthogonal) direct sum of the N\'eron--Severi lattice
	(endowed
	with the intersection pairing) and a copy of the hyperbolic plane. By Witt's
	cancellation theorem, 
	the N\'eron--Severi groups $\mathrm{NS}(S_{k^s})_\Q$ and
	$\mathrm{NS}(S'_{k^s})_\Q$ of two derived equivalent surfaces are isomorphic
	both as $\mathrm{Gal}(k)$-representations and as quadratic spaces\,;
	there exists therefore a
	correspondence $M = \pi^2_{\mathrm{alg},S'} \circ M \circ
	\pi^2_{\mathrm{alg},S}$ in $\CH^2(S\times_k S')$ inducing an
	isometry $\mathrm{NS}(S_{k^s})_\Q \simeq \mathrm{NS}(S'_{k^s})_\Q$. This means
	that $M$ induces an isomorphism
	$\h^2_{\mathrm{alg}}(S) \stackrel{\sim}{\longrightarrow}
	\h^2_{\mathrm{alg}}(S')$
	with inverse given by its transpose.
	It follows that $\Gamma_{\mathrm{alg}} := o_S \times S' + M + S\times o_{S'}$
	induces an
	isomorphism $\h_{\mathrm{alg}}(S)
	\stackrel{\sim}{\longrightarrow} \h_{\mathrm{alg}}(S')$ with inverse
	${}^t\Gamma_{\mathrm{alg}}$. In addition, we have $(\Gamma_{\mathrm{alg}})_*
	o_S = o_{S'}$.
	
	Second, recall from \S \ref{subsec:weight} that $v_2(\mathcal{E})$
	induces an isomorphism $\h^2_{\mathrm{tr}}(S) \stackrel{\sim}{\longrightarrow}
	\h^2_{\mathrm{tr}}(S')$ with inverse induced by $v_2(\mathcal{E}^\vee\otimes
	p^*\omega_S)$.
	Since K3 surfaces have trivial first Chern class and trivial
	canonical bundle, it follows that the inverse of $v_2(\mathcal{E})$ is in fact
	its transpose.
	In other words,	 $\Gamma_{\mathrm{tr}}:= \pi^2_{\mathrm{tr},S'} \circ
	v_2(\mathcal{E}) \circ
	\pi^2_{\mathrm{tr},S}$ induces an isomorphism of Chow motives
	$\h^2_{\mathrm{tr}}(S)
	\stackrel{\sim}{\longrightarrow}\h^2_{\mathrm{tr}}(S')$ with inverse its
	transpose.
	Finally, we do have $(\pi^2_{\mathrm{tr}})_*o_S =0$
	because of
	the orthogonality of $\pi^2_{\mathrm{tr},S}$ with $\pi^4_S$. 
	
	The required correspondences $\Gamma_{\mathrm{alg}}$ and $\Gamma_{\mathrm{tr}}$
	have thus been constructed and this concludes the proof of
	Theorem~\ref{thm:main}.
	\qed
	
	\subsection{Proof of Corollary \ref{cor:Powers}} 
	Let $S$ and $S'$ be two twisted derived equivalent K3 surfaces. Then due to
	Theorem \ref{thm:main} their motives are isomorphic as Frobenius algebra
	objects. As is
	explained in \S \ref{SS:alg},
	isomorphisms of Frobenius algebra objects behave well with respect to (tensor)
	products, hence it suffices to see that for any $n\in \Z_{>0}$ the Hilbert
	schemes of length-$n$ subschemes $\Hilb^{n}(S)$ and $\Hilb^{n}(S')$ have isomorphic Chow
	motives as Frobenius algebra objects. To this end, we use the result of
	Fu--Tian
	\cite{FT} that describes the algebra object $\h(\Hilb^{n}(S))$ in terms of the
	algebra objects
	$\h(S^{m})$ for $m\leq n$, together with some explicit combinatorial rules.
	More precisely, by \cite[Theorem 1.6 and Remark 1.7]{FT}, for a K3 surface $S$,
	we have an isomorphism of algebra objects\,:
	\begin{equation}\label{eqn:IsoFT}
	\phi: \h(\Hilb^{n}(S))\simeq \left(\bigoplus_{g\in
		\mathfrak{S}_{n}}\h\left(S^{O(g)}\right), \star_{\operatorname{orb},
		\operatorname{dt}}\right)^{\mathfrak{S}_{n}},
	\end{equation}
	where $\mathfrak{S}_{n}$ is the symmetric group acting naturally on $S^{n}$\,;
	for a permutation $g$, $O(g)$ is its set of orbits in $\{1, \cdots, n\}$,
	$S^{O(g)}$ is canonically identified with the fixed locus $(S^{n})^{g}$, and
	finally $\star_{\operatorname{orb}, \operatorname{dt}}$ is the orbifold product
	with discrete torsion (see \cite{ftv, FT}) defined as follows (let us omit the
	Tate twists for ease of notation)\,: it is  compatible with the
	$\mathfrak{S}_{n}$-grading, and for any $g, h \in \mathfrak{S}_{n}$,
	$\h\left(S^{O(g)}\right)\otimes \h\left(S^{O(h)}\right)\to
	\h\left(S^{O(g,h)}\right)$ is given by the pushforward via the diagonal
	inclusion $S^{O(g,h)}\hookrightarrow S^{O(g)}\times S^{O(h)}\times S^{O(gh)}$ 
	of the cycle $\epsilon(g,h)c_{g,h}\in \CH(S^{O(g, h)})$, by \cite[Lemma
	9.3]{FT}\,:
	\begin{equation}\label{eqn:ObsCl}
	c_{g,h}:=
	\begin{cases}
	0, &  \text{ if } \exists\,  t\in O(g,h) \text{ with } d_{g,h}(t)\geq 2\, ;\\
	\prod_{t\in I}\left(24\pr_{t}^{*}(o_{S})\right), & \text{if } \forall t\in
	O(g,h) \text{ has } d_{g,h}(t)=0 \text{ or } 1,
	\end{cases}
	\end{equation}
	where $\epsilon(g,h):=(-1)^{\frac{n-|O(g)|-|O(h)|+|O(gh)|}{2}}$, $O(g, h)$ is
	the set of orbits in $\{1, \cdots, n\}$ under the subgroup generated by $g$ and
	$h$\,; for any orbit $t\in O(g, h)$,
	$$d(g,h)(t):=\frac{2+|t|-|t/g|-|t/h|-|t/gh|}{2}$$ is the \emph{graph defect}
	function \cite[Lemma 9.1]{FT} and $I:=\{t\in O(g,h)~\vert~ d_{g,h}(t)=1\}$ is
	the subset of orbits with graph defect 1.
	
	As our isomorphism of algebra objects $\Gamma: \h(S)\to \h(S')$ satisfies 
	$\Gamma_{*}(o_{S})=o_{S'}$, it is now clear from the above precise description
	that the right-hand side of \eqref{eqn:IsoFT} for $S$ and for $S'$ are
	isomorphic algebra objects, and the isomorphism can be chosen orthogonal. As
	the
	morphism $\phi$ in \eqref{eqn:IsoFT} satisfies	 $\phi^{-1}={}^{t}\phi$, we have
	$\h(\Hilb^{n}(S))$ and $\h(\Hilb^{n}(S'))$ are
	isomorphic Frobenius algebra objects. This completes the proof.\qed
	
	\begin{rmk}[Chow rings \textit{vs.}~algebra objects]\label{rmk:ChowRingIso}
		It turns out that we do not need Theorem \ref{thm:main} to show that two
		twisted derived equivalent K3 surfaces have isomorphic Chow rings. Indeed, 
		Huybrechts' result \cite{huybrechts-derivedeq} (generalized to the twisted
		case
		in \cite{huybrechts-isogenous}) provides a correspondence $\Gamma\in
		\CH^{2}(S\times S')$ that induces an isomorphism of graded $\Q$-vector spaces
		$\Gamma_{*}: \CH^{*}(S)\stackrel{\sim}{\longrightarrow}\CH^{*}(S')$ with the
		extra property of being isometric on the N\'eron--Severi spaces
		$\CH^{1}(S)\stackrel{\sim}{\longrightarrow} \CH^{1}(S')$.  Now thanks to the
		theorem of Beauville--Voisin \cite{bv} saying that the image of the
		intersection
		product of two divisors on a K3 surface is of dimension 1,  this already
		implies
		that $\Gamma_{*}$ is actually an isomorphism of graded $\Q$-algebras. 
		
		In contrast, in the situation of Corollary \ref{cor:Powers}, a derived
		equivalence between $D^{b}(S)$ and $D^{b}(S')$ does give rise to a derived
		equivalence between their powers and Hilbert schemes, thanks to
		Bridgeland--King--Reid \cite{BKR} and Haiman \cite{Haiman}. However, it is not
		at all clear for the authors how to produce an isomorphism of the Chow rings
		(or
		even the rational cohomology rings) of two derived equivalent holomorphic
		symplectic varieties starting from the Fourier--Mukai kernel\,; see Conjecture
		\ref{conj:HK}.
	\end{rmk}

	\subsection{Proof of Corollary \ref{cor:torelli}} \label{S:coro}
	The equivalence of ${(i)}$ and ${(ii)}$ is due to Huybrechts \cite[Corollary
	1.4]{huybrechts-isogenous}, while the implication $(ii) \Rightarrow (iii)$ is
	Theorem \ref{thm:main}. We now prove
	the implication $(iii) \Rightarrow (i)$. Suppose that $\Gamma : \h(S)
	\rightarrow \h(S')$ is an isomorphism that preserves the algebra structures.
	Let $c$ be the rational number such that $\Gamma^{*}[S']=c\,[S]$, or
	equivalently such that $\Gamma^{-1}=\frac{1}{c}{}^{t}\Gamma$ by
	Lemma~\ref{lem:formal}\,; then
	the following diagram is commutative\,:
	
	\begin{equation}\label{eq:diagmultcoho}
	\xymatrix{\HH^2(S)\otimes \HH^2(S) \ar[rr]^{\quad \cup} \ar[d]^{(\Gamma\otimes
			\Gamma)_*} && \HH^4(S) \ar[d]^{\Gamma_*} \ar[rr]^{\deg} && \Q \ar[d]^{\cdot
			c}\\
		\HH^2(S')\otimes \HH^2(S') \ar[rr]^{\quad \cup} && \HH^4(S') \ar[rr]^{\deg} &&
		\Q.
	}
	\end{equation} 
	The commutativity of the left-hand square of \eqref{eq:diagmultcoho} is
	implied directly by the assumption that $\Gamma$ preserves the algebra
	structures, while the commutativity of the right-hand square follows from the
	Poincar\'e dual of the identity $\Gamma^{*}[S']=c\,[S]$. If  in addition
	$\Gamma$ preserves the Frobenius algebra structure, then $c=1$ by
	Proposition~\ref{prop:RigidAlgIso}. This means that $S$ and $S'$ are isogenous.
	\qed

	\subsection{A motivic global Torelli theorem} The aim of this section is to
	show that  Lemma~\ref{lem:formal} directly allows to upgrade motivically the
	global Torelli theorem, without utilizing the decomposition of the diagonal of
	Beauville--Voisin (Theorem~\ref{thm:bv}). We denote $\h(X)_\Z$ the Chow motive
	of $X$ with integral coefficients.
	
	\begin{thm}[Motivic global Torelli theorem for K3
		surfaces]\label{thm:motglobtor} 
		Let $S$ and $S'$ be two complex projective K3 surfaces. The following
		statements are
		equivalent\,:
		\begin{enumerate}[(i)]
			\item $S$ and $S'$ are isomorphic\,;
			\item $\HH^2(S,\Z)$ and $\HH^2(S',\Z)$ are Hodge isometric\,;
			\item $\h(S)_\Z$ and $\h(S')_\Z$ are isomorphic as algebra objects.
		\end{enumerate}	
	\end{thm}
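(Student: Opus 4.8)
The plan is to prove the cycle of implications $(i)\Rightarrow(iii)\Rightarrow(ii)\Rightarrow(i)$, noting that everything except $(iii)\Rightarrow(ii)$ is either formal or classical. Indeed, an isomorphism $f\colon S\xrightarrow{\sim}S'$ of K3 surfaces induces via its graph an isomorphism of \emph{integral} Chow motives that carries the small diagonal $\delta_S$ to $\delta_{S'}$, hence respects the algebra structure, giving $(i)\Rightarrow(iii)$; and $f$ obviously induces a Hodge isometry on $\HH^2(-,\Z)$, giving $(i)\Rightarrow(ii)$. The reverse implication $(ii)\Rightarrow(i)$ is precisely the classical global Torelli theorem of Piatetski-Shapiro--Shafarevich \cite{PSS}. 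So the only genuinely new content is $(iii)\Rightarrow(ii)$, and the point is that integrality forces an algebra isomorphism to automatically respect the Frobenius structure; in particular, unlike the proof of Theorem~\ref{thm:main}, this argument does not require the Beauville--Voisin decomposition of the small diagonal.

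To prove $(iii)\Rightarrow(ii)$, I would start from an isomorphism $\Gamma\colon \h(S)_\Z\xrightarrow{\sim}\h(S')_\Z$ of algebra objects, viewed as a class $\Gamma\in \CH^2(S\times S')_\Z$. The formal computation underlying Lemma~\ref{lem:formal} (Lieberman's formula is valid integrally) yields $\Gamma_*[S]=[S']$ and, writing $c$ for the integer with $\Gamma^*[S']=c\,[S]$, gives $\Gamma^{-1}=\tfrac{1}{c}\,{}^t\Gamma$. Applying the monoidal Betti realization functor, $\Gamma$ induces an isomorphism of integral graded cohomology algebras $\Gamma_*\colon \HH^*(S,\Z)\xrightarrow{\sim}\HH^*(S',\Z)$, which is degree-preserving because $\Gamma$ is a codimension-$2$ correspondence between surfaces. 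In particular $\phi:=\Gamma_*|_{\HH^2}\colon \HH^2(S,\Z)\xrightarrow{\sim}\HH^2(S',\Z)$ is an isomorphism of torsion-free integral Hodge structures. Since $\Gamma$ respects the multiplication (the realization of $\delta_S$ is the cup product) and acts on top cohomology $\HH^4(-,\Z)\cong\Z$ by the scalar $c$ (this is the identity $\deg(\Gamma_*[\mathrm{pt}])=c$ of Proposition~\ref{prop:RigidAlgIso}), the relation $\langle \phi x,\phi y\rangle_{S'}=c\,\langle x,y\rangle_S$ holds for all $x,y\in\HH^2(S,\Z)$; that is, $\phi$ is a \emph{similitude} of factor $c$ between the integral K3 lattices.

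The crux is then to pin down $c$, which is exactly the assertion that an integral algebra isomorphism satisfies $\deg(\Gamma)=1$ and hence is a Frobenius algebra isomorphism. Here the rigidity of the K3 lattice does the work. Because $\phi$ is an isomorphism of the even unimodular lattice $\HH^2(S,\Z)\cong U^{3}\oplus E_8(-1)^{2}$ of rank $22$ and determinant $-1$, evaluating determinants in $\langle \phi x,\phi y\rangle_{S'}=c\,\langle x,y\rangle_S$ on a $\Z$-basis forces $-1=c^{22}(-1)$, so $c=\pm 1$. The value $c=-1$ is impossible, since it would make $\phi$ an isometry from the K3 lattice, of signature $(3,19)$, onto its negative, of signature $(19,3)$. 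Hence $c=1$, so $\phi$ is an integral isometry; being induced by an algebraic correspondence it is moreover a morphism of Hodge structures, whence an integral Hodge isometry, establishing $(ii)$. The main obstacle is precisely this last step of excluding $c=-1$: it is the phenomenon, invisible with rational coefficients, that separates an algebra isomorphism from a Frobenius algebra isomorphism, and it is what allows the classical Torelli theorem to be upgraded to the motivic statement.
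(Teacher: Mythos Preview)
Your proof is correct and follows essentially the same approach as the paper: both reduce to showing that an integral algebra isomorphism $\Gamma$ induces a similitude $\HH^2(S,\Z)\langle c\rangle \simeq \HH^2(S',\Z)$ for some integer $c$, and then argue from properties of the K3 lattice that $c=1$. You spell out the final step more explicitly (determinant gives $c^{22}=1$, signature excludes $c=-1$), whereas the paper simply asserts that the isometry $\HH^2(S,\Z)\otimes\langle c\rangle \simeq \HH^2(S',\Z)$ forces $c=1$; note that you could reach $c=\pm 1$ even more directly, since $\Gamma_*$ is already an automorphism of $\HH^4(-,\Z)\cong\Z$ acting as multiplication by $c$.
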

	\begin{proof}
		The equivalence of items $(i)$ and $(ii)$ is the global Torelli theorem. The
		implication $(i)\Rightarrow (iii)$ is obvious. It remains to check that $(iii)
		\Rightarrow (ii)$. Once it is observed that Lemma~\ref{lem:formal}
		holds with integral coefficients, we obtain the following commutative diagram
		(with $c\in \Z$), which is similar to \eqref{eq:diagmultcoho} in the proof of
		Corollary~\ref{cor:torelli} 
		\begin{equation*}
		\xymatrix{\HH^2(S, \Z)\otimes \HH^2(S, \Z) \ar[rr]^{\quad \cup}
			\ar[d]_{\simeq}^{(\Gamma\otimes
				\Gamma)_*} && \HH^4(S, \Z) \ar[d]_{\simeq}^{\Gamma_*}
			\ar[rr]_{\simeq}^{\deg}
			&& \Z \ar[d]_{\simeq}^{\cdot c}\\
			\HH^2(S', \Z)\otimes \HH^2(S', \Z) \ar[rr]^{\quad \cup} && \HH^4(S', \Z)
			\ar[rr]_{\simeq}^{\deg} &&
			\Z.
		}
		\end{equation*} 		
		Therefore, there is an isometry of lattices between $\HH^{2}(S, \Z)\otimes
		\langle c\rangle$ and $\HH^{2}(S', \Z)$, which implies that $c=1$.
	\end{proof}

	\section{Beyond K3 surfaces}\label{sect:MultOrlov}
	From now on, the base field will be the field of complex numbers.
	Orlov's conjecture \ref{conj:orlov} predicts that the Chow motives of
	two derived equivalent smooth projective varieties are isomorphic. Motivated by
	Theorem \ref{thm:main}, we raised the following question in the introduction\,:
	
	\begin{ques}\label{ques:DeriveFrob}
		When can we expect more strongly that a derived equivalence between two smooth
		projective varieties implies an
		isomorphism between their rational Chow motives as \emph{Frobenius algebra
			objects}\,? 
	\end{ques}
	We make some remarks and speculations on this question in this section. 
	
	\begin{rmk}
		By Bondal--Orlov \cite{bo}, two derived equivalent smooth projective varieties
		that are either Fano or with ample canonical bundle are isomorphic\,; in
		particular, their motives are isomorphic as Frobenius algebra objects.
		Similarly, Question \ref{ques:DeriveFrob} also has a positive answer for
		curves,
		as they do not have
		non-isomorphic Fourier--Mukai partners \cite[Corollary
		5.46]{HuybrechtsFMBook}.
	\end{rmk}
	
	In general, one cannot expect in general a positive answer to Question
	\ref{ques:DeriveFrob}. In fact, if $\h(X)$ and $\h(Y)$ are isomorphic as
	Frobenius algebra objects then by applying the Betti realization functor, their
	cohomology are isomorphic as Frobenius algebras, that is, due to
	Proposition~\ref{prop:RigidAlgIso}, there is a (graded)
	isomorphism of $\Q$-algebras $\HH^{*}(X, \Q)\to\HH^{*}(Y, \Q)$ sending
	the class of a point on~$X$ to the class of a point on $Y$. However, as we
	will see below, this is not the case in general for derived equivalent
	varieties.  
	
	\subsection{Calabi--Yau varieties}
	\begin{ex}\label{ex:BC}
		Borisov and C\u ald\u araru \cite{bc} constructed derived equivalent
		(but non-birational) Calabi--Yau threefolds $X$ and $Y$ with the following
		properties\,:  $\Pic(X)
		=\Z H_X$ with $\deg(H_X^3)
		= 14$ and $\Pic(Y) = \Z H_Y$ with $\deg(H_Y^3) = 42$\,; hence there is no
		graded $\Q$-algebra isomorphism between $\HH^*(X,\Q)$ and $\HH^*(Y,\Q)$ that
		respects the point class. Therefore, $\h(X)$ and $\h(Y)$ are not isomorphic as
		Frobenius algebra objects. Nevertheless, thanks to the following proposition,
		$\HH^*(X,\Q)$ and $\HH^*(Y,\Q)$ are Hodge isomorphic as graded $\Q$-algebras
		and
		also as graded Frobenius algebras after extending the coefficients to $\mathbb
		R$.\footnote{We are not aware of any examples of derived equivalent smooth projective varieties with
			non-isomorphic cohomology as $\Q$-algebras or as $\mathbb{R}$-Frobenius
			algebras.}
	\end{ex}
	
	\begin{prop}
		Let $X$ and $Y$ be two derived equivalent Calabi--Yau varieties of dimension
		$d\geq 3$. Suppose their Hodge numbers satisfy 
		\begin{itemize}
			\item  $h^{p,q}=0$ for all $p\neq q$ and $p+q\neq d$;
			\item $h^{p,p}=1$ for all $2p\neq d$ and $0\leq p \leq d$.
		\end{itemize} 
		Then\,:
		\begin{enumerate}[$(i)$]
			\item There is a (graded) real Frobenius algebra isomorphism between
			$\HH^{*}(X, \mathbb{R})$ and $\HH^{*}(Y, \mathbb{R})$ preserving the real
			Hodge
			structures.	
			\item If $d$ is odd or $d$ is even and $s:= \frac{\deg(Y)}{\deg(X)}$ is a
			square in
			$\Q$, then $\HH^{*}(X, \Q)$ and $\HH^{*}(Y, \Q)$ are isomorphic as graded
			$\Q$-Hodge algebras. Here the degree is the top self-intersection number of
			the
			ample generator of the Picard group.
		\end{enumerate}
	\end{prop}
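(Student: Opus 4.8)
My plan is to reduce both statements to a comparison of the middle‑dimensional cohomology, isolating the rigid ``Lefschetz skeleton'' generated by the ample class from the transcendental part of $\HH^d$. Since derived equivalent smooth projective varieties have the same Hodge numbers (via Hochschild homology), $X$ and $Y$ share all Betti and Hodge numbers; in particular $h^{1,1}=1$ (as $d\ge 3$), so $\Pic(X)_\Q$ and $\Pic(Y)_\Q$ are the lines $\Q\,H_X$, $\Q\,H_Y$ spanned by ample generators. The hypotheses together with hard Lefschetz then force $\HH^{2p}(X,\Q)=\Q\,H_X^p$ for $2p\neq d$, the vanishing of odd cohomology away from degree $d$, and $H_X^d=\deg(X)\,[\mathrm{pt}]$. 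Hence the ring $\HH^*(X,\Q)$ is determined by the truncated polynomial algebra $\Q[H_X]/(H_X^{d+1})$ together with the cup‑product pairing $B_X$ on $\HH^d$, with $H_X$ acting as $0$ on the primitive part of $\HH^d$. The upshot is that a graded algebra isomorphism $\phi$ is precisely the datum of a scalar $c$ with $\phi(H_X)=c\,H_Y$ and a Hodge similitude $\psi\colon\HH^d(X)\to\HH^d(Y)$ of factor $c^d s$, where $s=\deg(Y)/\deg(X)$ (compatibility on the line $\Q H_X^{d/2}$ being automatic); by Proposition~\ref{prop:RigidAlgIso}, $\phi$ is Frobenius exactly when $c^d s=1$.

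For part $(i)$ I would work over $\mathbb{R}$ and invoke the classification of polarized real Hodge structures: such a structure of weight $d$ is determined up to isometry by its Hodge numbers, the signature of the polarization being fixed by the Hodge--Riemann relations. As the Hodge numbers agree and the cup‑product forms $B_X,B_Y$ have equal signature (Hodge index), the middle cohomologies are isometric as polarized real Hodge structures. Taking the positive real root $c=s^{-1/d}$ gives $c^d s=1$; assembling the scaling on the skeleton with this isometry yields a real algebra isomorphism preserving the point class and the real Hodge structures, hence a real Frobenius algebra isomorphism.

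For part $(ii)$ the real classification fails, so the rational Hodge isomorphism of the middle must come from the equivalence itself. Following \S\ref{subsec:orlov}--\S\ref{subsec:weight}, the Mukai vector $v(\mathcal{E})$ induces a morphism of rational Hodge structures; since algebraic correspondences preserve the difference $p-q$ of Hodge types and transcendental classes occur only in degree $d$, the transcendental part of $\HH^d$ is preserved, giving a rational Hodge isomorphism whose inverse is its transpose (using that $\omega_X$ is trivial), hence an isometry of $B_X$ on that part. For $d$ odd the whole of $\HH^d$ is transcendental and $B$ is skew; rescaling the isometry by $\lambda\in\Q^\times$ multiplies the similitude factor by $\lambda^2$, and since $(\Q^\times)^2(\Q^\times)^d=\Q^\times$ for odd $d$ one can solve $\lambda^2=c^d s$ over $\Q$, so no condition on $s$ arises. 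For $d$ even the form is symmetric, rational rescalings reach only square multiples of the factor, and the algebraic middle part $A^d$ (the $(m,m)$‑classes, including the Lefschetz line of self‑intersection $\deg(X)$) constrains the achievable factor to the square class of $s$; matching it with $c^d s\in(\Q^\times)^2$ is possible exactly when $s$ is a square. Reassembling as in the first paragraph produces the graded $\Q$‑Hodge algebra isomorphism.

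The routine parts are the structural reduction and the real classification. I expect the genuine obstacle to be the rational case: one must actually produce the rational Hodge isometry of the middle cohomology from the Fourier--Mukai kernel (Hodge numbers alone never determine a rational Hodge structure), and for even $d$ one must control the rational quadratic form on the algebraic middle cohomology $A^d$ — verifying that $(A^d(X),B_X)$ and $(A^d(Y),B_Y)$ are rationally similar up to a square, as expected from the Mukai‑pairing isometry on algebraic classes. It is precisely this bookkeeping of square classes that forces, and is resolved by, the hypothesis that $s$ be a square when $d$ is even.
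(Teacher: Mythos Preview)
Your proposal is essentially correct and follows the same architecture as the paper's proof: reduce to the middle cohomology, split off the Lefschetz line, get the transcendental Hodge isometry from the Mukai vector of the Fourier--Mukai kernel, and solve the scalar equation $b^2=a^d s$ (your $\lambda^2=c^d s$) governing compatibility of the similitude factors.

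There is one genuine difference worth noting. For part~$(i)$ you bypass the derived equivalence entirely and appeal to the fact that polarized \emph{real} Hodge structures are determined up to isometry by their Hodge numbers (via Hodge--Riemann). The paper instead reuses the Fourier--Mukai argument of part~$(ii)$ over $\mathbb{R}$ and simply observes that $b^2=a^d s$ always has positive real solutions with $a^d s=1$. Your route is cleaner over $\mathbb{R}$ and makes transparent why no arithmetic obstruction appears; the paper's route is more uniform.

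For part~$(ii)$ with $d$ even, you correctly flag the control of the rational quadratic form on the algebraic middle $A^d=\Hdg^d_\Q$ as the crux, but you stop short of the actual mechanism. The paper's argument is: the Mukai vector induces an isometry of the \emph{full} algebraic cohomology $\Hdg^*_\Q(X)\simeq\Hdg^*_\Q(Y)$ for the Mukai pairing; the complement of degree $d$ is a sum of hyperbolic planes on both sides, so Witt cancellation gives an isometry $\Hdg^d_\Q(X)\simeq\Hdg^d_\Q(Y)$. When $s$ is a square this upgrades to an isometry $\Hdg^d_\Q(X)(s)\simeq\Hdg^d_\Q(Y)$, and Witt's extension theorem lets you arrange that $H_X^{d/2}\mapsto H_Y^{d/2}$. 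This is exactly the ``Mukai-pairing isometry on algebraic classes'' you allude to; filling it in completes your sketch.
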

	\begin{proof}
		We first prove $(ii)$.
		Let $\mathcal{E}$ be the Fourier--Mukai kernel of the equivalence from
		$\D^{b}(X)$ to $\D^{b}(Y)$. By \cite[Proposition 5.44]{HuybrechtsBook}, the
		correspondence given by the Mukai vector $v(\mathcal{E})\in \CH^{*}(X\times
		Y)$
		induces a $\Z/2\Z$-graded Hodge isometry $$\Phi^{\HH}_{\mathcal{E}}:
		\HH^{*}(X,
		\Q)\stackrel{\sim}{\longrightarrow} \HH^{*}(Y, \Q),$$
		where both sides are equipped with the Mukai pairing. Note that as the
		varieties are Calabi--Yau, the Mukai pairing is simply given by the
		intersection pairing with
		some extra sign changes (\cite[Definition 5.42]{HuybrechtsBook}). The
		\emph{transcendental cohomology} denoted by $\HH^{*}_{\operatorname{tr}}(-,
		\Q)$
		is defined to be the orthogonal complement of the space of Hodge classes\,; it is obviously
		preserved by $\Phi^{\HH}_{\mathcal{E}}$. Thanks to our assumption on the Hodge
		numbers, the transcendental cohomology is concentrated in degree $d$.
		Therefore
		by restricting $\Phi_{\mathcal{E}}^{\HH}$, we get a Hodge isometry 
		$$\phi_{\operatorname{tr}}: \HH^{d}_{\operatorname{tr}}(X,
		\Q)\stackrel{\sim}{\longrightarrow}\HH^{d}_{\operatorname{tr}}(Y, \Q).$$
		On the other hand, if $d$ is even,
		$\Phi_{\mathcal{E}}^{\HH}$ also provides an isometry between the
		subalgebras of Hodge classes $\Hdg^{*}_{\Q}(X)$ and $\Hdg^{*}_{\Q}(Y)$. Since
		the quadratic space $\HH^{0}\oplus\cdots\oplus \HH^{d-2}\oplus \HH^{d+2}\oplus
		\cdots\oplus \HH^{2d}$ equipped with the restriction of the Mukai pairing is
		isometric
		to $U^{\frac{d}{2}}\otimes \Q$ for both $X$ and $Y$, the quadratic spaces
		$\Hdg^{d}_{\Q}(X)$ and $\Hdg^{d}_{\Q}(Y)$ are isometric by Witt cancellation
		theorem. Due to the
		assumption that $s:= \frac{\deg(Y)}{\deg(X)}$ is a square and to Witt's
		theorem, we
		have an isometry $$\phi_{\Hdg}:
		\Hdg^{d}_{\Q}(X)\left(s\right) \longrightarrow \Hdg^{d}_{\Q}(Y)$$
		that sends $H_{X}^{\frac{d}{2}}$ to $H_{Y}^{\frac{d}{2}}$, where
		$H_{X}$ and $H_{Y}$ denote the ample generators of $\Pic(X)$ and $\Pic(Y)$,
		respectively.
		
		Let us now try to define a graded Hodge algebra isomorphism $\psi: \HH^{*}(X,
		\Q) \to \HH^{*}(Y, \Q)$. Consider the following formulas with the numbers $a,
		b$
		to be determined later\,:
		\begin{itemize}
			\item $H^{i}_{X}\mapsto a^{i}\cdot H^{i}_{Y}$ for all $0\leq i\leq d$ and
			consequently $[\pt_{X}]\mapsto a^{d}\, s\cdot[\pt_{Y}]$, where
			$[\pt]$ is the class of a point\,;
			\item $a^{\frac{d}{2}}\cdot\phi_{\Hdg}: \Hdg_{\Q}^{d}(X)\to
			\Hdg^{d}_{\Q}(Y)$;
			\item $b\cdot\phi_{\tr}:\HH^{d}_{\tr}(X, \Q)\to \HH^{d}_{\tr}(Y, \Q)$.
		\end{itemize}
		These formulas define an algebra isomorphism if and only if
		$b^{2}=a^{d}\, s$. This equation has non-zero rational
		solutions when $d$ is odd or $d$ is even and $s$ is a
		square in $\Q$. Item $(ii)$ is therefore  proved.
		
		The proof of $(i)$ goes similarly as for $(ii)$ by replacing $\Q$ by
		$\mathbb{R}$. Notice that the analogous assumption that
		$s$ is a square in $\mathbb{R}$ is automatically
		satisfied. So it is enough to see that there are always non-zero real
		solutions
		to the equation $b^{2}=a^{d}\, s=1$, where the last equality
		reflects the Frobenius condition.
	\end{proof}
	
\begin{ex}
	let $V$ and $W$ be vector spaces of dimension 5 and 10, respectively. Given two generic isomorphisms $\phi_i\colon \bigwedge^2V\xrightarrow{\simeq} W$, $i\in \{1, 2\}$, one obtains two embeddings $\operatorname{Gr}(2, V)\hookrightarrow \mathbb{P}(W)$, whose intersection is a Calabi--Yau 3-fold $X$ of Picard number 1. Using the inverses of the dual isomorphisms $\phi^\vee_i\colon W^\vee  \xrightarrow{\simeq} \bigwedge^2V^\vee$, one defines similarly another Calabi--Yau 3-fold $Y$ in~$\mathbb{P}(W^\vee)$.
In~\cite{bcp}, Borisov, C\u ald\u araru and Perry showed that $X$ and $Y$ are derived equivalent (but non-birational). As $X$ and $Y$ have the same degree (i.e.~$s=1$), the argument in the previous proposition shows that there is a (graded)  Frobenius algebra isomorphism between
$\HH^{*}(X, \mathbb{Q})$ and $\HH^{*}(Y, \mathbb{Q})$ preserving the Hodge structures.	
\end{ex}
	\subsection{Abelian varieties}
	
	\begin{prop}[Isogenous abelian varieties]\label{prop:AV}
		Let $A$ and $B$ be two isogenous abelian varieties of dimension $g$. Then 
		\begin{enumerate}[$(i)$]
			\item $\h(A)$ and $\h(B)$ are isomorphic as algebra objects.
			\item The following conditions are equivalent:
			\begin{enumerate}
				\item  There is an isomorphism of Frobenius algebra objects between $\h(A)$
				and
				$\h(B)$.
				\item There is a graded Hodge isomorphism of Frobenius algebras between
				$\HH^{*}(A, \Q)$ and $\HH^{*}(B, \Q)$.
				\item There exists an isogeny of degree $m^{2g}$ between $A$ and $B$  for
				some
				$m\in \Z_{>0}$.
			\end{enumerate} 
			In the case that these equivalent conditions hold, the isomorphism in $(a)$,
			denoted by $\Gamma: \h(A)\to \h(B)$,  can be chosen to respect moreover the
			motivic decomposition of Deninger--Murre \cite{dm} in the sense that 
			$\Gamma\circ \pi_{A}^{i}=\pi_{B}^{i}\circ \Gamma$ for any $i$,
			where the $\pi^{i}$'s are the projectors corresponding to the decomposition.
			\item $\h(A)_{\mathbb{R}}$ and $\h(B)_{\mathbb{R}}$ are isomorphic as
			Frobenius
			algebra objects in the category of Chow motives with real coefficients. 
		\end{enumerate}
	\end{prop}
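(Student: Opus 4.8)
The plan is to reduce everything to linear algebra on the weight-one summand. Recall from Deninger--Murre \cite{dm} that for an abelian variety $A$ of dimension $g$ the Chow motive decomposes as $\h(A)=\bigoplus_{i=0}^{2g}\h^i(A)$ with $\h^i(A)=\wedge^i\h^1(A)$, and that, under the intersection product (given by the small diagonal), $\h(A)$ is precisely the exterior algebra object generated by $\h^1(A)$. The first thing I would record is the consequence that a graded morphism of algebra objects $\Gamma\colon\h(A)\to\h(B)$ is the same datum as a morphism $\phi\colon\h^1(A)\to\h^1(B)$ of motives, via $\Gamma=\wedge^\bullet\phi$, with $\Gamma$ an isomorphism precisely when $\phi$ is, and $\Gamma$ automatically respecting the Deninger--Murre decomposition. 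This already yields $(i)$: an isogeny $f\colon A\to B$ induces an isomorphism $\phi=(f^*|_{\h^1})^{-1}\colon\h^1(A)\to\h^1(B)$ (its inverse being $\tfrac{1}{\deg f}f_*$ on $\h^1$, as one checks on curve-type motives using Proposition~\ref{prop:curve}), whence $\wedge^\bullet\phi$ is an algebra isomorphism.

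The key numerical input is a determinant dictionary for the Frobenius (orthogonality) condition. Writing $\h^{2g}(A)=\wedge^{2g}\h^1(A)\simeq\mathds{1}(-g)$ with canonical generator the point class, any $\phi\colon\h^1(A)\to\h^1(B)$ has a well-defined determinant $\det\phi\in\Q^\times$ measuring the effect of $\wedge^{2g}\phi$ on point classes, so that $\Gamma_*[\pt_A]=\det\phi\cdot[\pt_B]$; by Proposition~\ref{prop:RigidAlgIso} the algebra isomorphism $\wedge^\bullet\phi$ is an isomorphism of \emph{Frobenius} algebra objects if and only if $\det\phi=1$. Passing to Betti realization identifies $\det\phi$ with the determinant of $\phi$ on $\HH^1(-,\Z)$, and for $\phi$ induced by an isogeny $f$ this equals $\deg f>0$ (positivity from the complex orientation). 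Thus for $(c)\Rightarrow(a)$, given an isogeny $f$ of degree $m^{2g}$, I would set $\phi=m\,(f^*|_{\h^1})^{-1}$, so that $\det\phi=m^{2g}\cdot m^{-2g}=1$ and $\Gamma=\wedge^\bullet\phi$ is the desired Frobenius algebra isomorphism respecting the motivic decomposition.

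To close the equivalences in $(ii)$ I would argue $(a)\Rightarrow(b)\Rightarrow(c)$. The implication $(a)\Rightarrow(b)$ is formal: the Betti realization is a tensor functor and hence carries an isomorphism of Frobenius algebra objects to a graded Hodge isomorphism of Frobenius algebras on $\HH^*(-,\Q)$. For $(b)\Rightarrow(c)$, restrict such an isomorphism to degree one to obtain a Hodge isomorphism $\psi^1\colon\HH^1(A,\Q)\to\HH^1(B,\Q)$; the point-class condition forces $\det\psi^1=1$. By the equivalence between abelian varieties up to isogeny and polarizable weight-one rational Hodge structures, $\psi^1=\tfrac{1}{m}f^*$ for some isogeny $f\colon B\to A$ and $m\in\Z_{>0}$, and then $\deg f=m^{2g}\det\psi^1=m^{2g}$, which is $(c)$. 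Finally, $(iii)$ follows from the same construction with real coefficients: starting from any isogeny $f$ of degree $d>0$ between $A$ and $B$, the real scalar $d^{1/2g}$ is available, so $\phi=d^{1/2g}(f^*|_{\h^1})^{-1}$ has $\det\phi=1$ and $\wedge^\bullet\phi$ is a Frobenius algebra isomorphism over $\R$ unconditionally.

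The main obstacle is the structural input underlying the whole reduction, namely that $\h(A)$, with its intersection-product algebra structure, is genuinely the free exterior algebra object on $\h^1(A)$, so that algebra morphisms are exactly the $\wedge^\bullet$ of $\h^1$-morphisms and the notion of $\det\phi$ behaves as in ordinary linear algebra; this is where I rely on \cite{dm} together with semisimplicity in weight one (Proposition~\ref{prop:curve}). The only other delicate points are bookkeeping: the positivity of $\det f^*=\deg f$ via the complex orientation, and the realization of a weight-one Hodge isomorphism by an honest isogeny up to a rational scalar in $(b)\Rightarrow(c)$.
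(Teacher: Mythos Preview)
Your proof is correct and follows essentially the same approach as the paper: both reduce to the weight-one piece, scale an isogeny pullback by $m^{-i}$ on $\h^i$, and invoke Proposition~\ref{prop:RigidAlgIso} to recognize the Frobenius condition. The paper checks orthogonality $\Gamma^{-1}={}^t\Gamma$ directly via $\pi^i={}^t\pi^{2g-i}$, whereas you use the equivalent determinant criterion $\Gamma_*[\pt]=[\pt]$; and for $(i)$ the paper simply observes that $f^*$ is an algebra morphism for any morphism~$f$, which is slightly quicker than your $\wedge^\bullet\phi$ packaging. One bibliographic point: the identification $\h^i(A)\simeq\wedge^i\h^1(A)$ as algebra objects that you rely on is due to K\"unnemann \cite{Kuennemann} (building on \cite{dm}), so you should cite that rather than \cite{dm} alone.
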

	\begin{proof}
		$(i)$ Consider any isogeny $f: B\to A$. Then $f^{*}: \h(A)\to \h(B)$ is an
		isomorphism of algebra objects with inverse given by
		$\frac{1}{\deg(f)}f_{*}$.\\
		$(ii)$ The implication $(a)\Longrightarrow (b)$ is obtained by applying the
		realization functor.\\
		$(b)\Longrightarrow (c)$. Let $\gamma: \HH^{*}(A,
		\Q)\stackrel{\sim}{\longrightarrow}\HH^{*}(B, \Q)$ be a Frobenius algebra
		isomorphism preserving the Hodge structures, and let $\gamma_{i}: \HH^{i}(A,
		\Q)\to \HH^{i}(B, \Q)$ be its $i$-th component, for all $0\leq i\leq 2g$.
		There
		exist a rational number $\lambda$ and an isogeny $f: B\to A$, such that
		$\gamma_{1}: \HH^{1}(A, \Q)\to \HH^{1}(B, \Q)$ is equal to 
		$\frac{1}{\lambda}f^{*}|_{\HH^{1}}$. As $\HH^{*}(A, \Q)\cong
		\bigwedge^{\bullet}
		\HH^{1}(A, \Q)$ as algebras and similarly for $B$, $\gamma$ is in fact
		determined by $\gamma_{1}$ in the following way\,: for any $i$, 
		$\gamma_{i}=\wedge^{i}\gamma_{1}=\frac{1}{\lambda^{i}}f^{*}|_{\HH^{i}}$. We
		compute that $$\id={}^{t}\gamma\circ
		\gamma=\left(\sum_{i}\frac{1}{\lambda^{i}}
		f_{*}|_{\HH^{2g-i}}\right)\circ
		\left(\sum_{i}\frac{1}{\lambda^{i}}f^{*}|_{\HH^{i}}\right)=\frac{1}{\lambda^{2g}}\deg(f)\cdot\id.$$
		This  yields that the isogeny $f$ is of degree $\lambda^{2g}$.\\
		$(c)\Longrightarrow (a)$	
		If there is an isogeny $f: B\to A$ of degree $m^{2g}$, then for any $0\leq
		i\leq 2g$ consider the morphism $\Gamma_{i}:=\frac{1}{m^{i}}\pi^{i}_{B}\circ
		f^{*}\circ \pi^{i}_{A}=\frac{1}{m^{i}} f^{*}\circ \pi^{i}_{A}$ from
		$\h^{i}(A)$
		to $\h^{i}(B)$, which is an isomorphism with inverse
		$\Gamma_{i}^{-1}=\frac{1}{m^{2g-i}}\pi^{i}_{A}\circ f_{*}$. Here we use the
		motivic decomposition of
		Deninger--Murre \cite{dm} for abelian varieties
		$\h(A)=\oplus_{i=0}^{2g}\h^{i}(A)$, and $\pi^{i}$ is the projector
		corresponding
		to $\h^{i}$. 
		One readily checks that $\Gamma:=\sum_{i} \Gamma_{i}: \h(A)\to \h(B)$ is an
		isomorphism of algebra objects. Moreover, as $\pi^{i}={}^{t}\pi^{2g-i}$ for
		all
		$i$, we have that $\Gamma_{i}^{-1}={}^{t}\Gamma_{2g-i}$, hence
		$\Gamma^{-1}={}^{t}\Gamma$, that is, $\Gamma$ respects the Frobenius
		structures.
		Notice that by construction, $\Gamma$ respects the
		decomposition of  Deninger--Murre.\\	
		The proof of $(iii)$ is similar to the last part of the proof of $(ii)$.
		One only needs to notice that there is no obstruction to taking the $2g$-th
		root of a
		positive number in $\mathbb{R}$. 
	\end{proof}
	
	As a consequence, given two derived equivalent abelian varieties, in general
	there is no isomorphism of Frobenius algebra objects between their Chow motives
	(or their cohomology). Indeed, by Proposition \ref{prop:AV}$(ii)$, the motives
	of two derived equivalent abelian varieties that cannot be related by an isogeny
	of degree
	the $2g$-th power of some positive integer are not isomorphic as Frobenius
	algebra objects. For instance, if one considers an abelian variety $A$ with
	N\'eron--Severi group generated by one ample line bundle $L$, then any isogeny
	between $A$ and $A^{\vee}$ is of degree $\chi(L)^{2}m^{4g}$ for some $m\in
	\Z_{>0}$. But in general, $\chi(L)$ is not a $g$-th power in $\Z$. On the other
	hand, $A$ and $A^{\vee}$ are always derived equivalent by Mukai's classical
	result \cite{MukaiAb}.

	\subsection{Hyper-K\"ahler varieties}
	
	One particularly interesting class of varieties for which we expect a positive
	answer consists of (projective) hyper-K\"ahler varieties\,; these  constitute
	higher-dimensional generalizations of K3 surfaces.  Note that by
	Huybrechts--Nieper-Wi\ss kirchen \cite{HNW}, any Fourier--Mukai partner of a
	hyper-K\"ahler variety remains  hyper-K\"ahler. 
	
	\begin{conj}\label{conj:HK}
		Let $X$ and $Y$ be two projective hyper-K\"ahler varieties. If there is an
		exact
		equivalence between triangulated categories $\operatorname{D}^b(X)$ and
		$\operatorname{D}^b(Y)$, then there exists an isomorphism of Chow motives
		$\h(X)$ and $\h(Y)$, as Frobenius algebra objects in the categories of Chow
		motives. In
		particular, their Chow rings as well as cohomology rings are isomorphic.
	\end{conj}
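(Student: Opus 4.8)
The plan is to follow the three-step strategy of the proof of Theorem~\ref{thm:main} (\S\ref{S:mainthm}), replacing the Beauville--Voisin calculus of K3 surfaces by its conjectural higher-dimensional analogue. By \cite{cs} the equivalence $\D^b(X)\simeq\D^b(Y)$ is of Fourier--Mukai type, induced by a kernel $\mathcal{E}\in\D^b(X\times Y)$, and by \cite{HNW} the partner $Y$ is again hyper-K\"ahler, of the same dimension $d:=\dim X$. As in \S\ref{subsec:orlov}, the Mukai vector $v(\mathcal{E})\in\CH^*(X\times Y)$ then induces a morphism of ind-motives whose inverse is $v(\mathcal{E}^\vee\otimes p^*\omega_X[d])$. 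By Proposition~\ref{prop:RigidAlgIso}, to prove the conjecture it suffices to extract from $v(\mathcal{E})$ a correspondence $\Gamma:\h(X)\to\h(Y)$ that is (1) an isomorphism of Chow motives, (2) \emph{orthogonal}, in the sense $(\Gamma\otimes\Gamma)_*\Delta_X=\Delta_Y$, and (3) \emph{multiplicative}, in the sense $(\Gamma\otimes\Gamma\otimes\Gamma)_*\delta_X=\delta_Y$.

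For (1) I would run the weight argument of \S\ref{subsec:weight}: granting for $X$ and $Y$ a Chow--K\"unneth decomposition satisfying Murre's Conjecture~\ref{conj:murre}\eqref{B} (equivalently, exploiting Kimura finite-dimensionality where it is known, e.g.\ for Hilbert schemes of K3 surfaces and generalised Kummer varieties), the morphism $v(\mathcal{E})$ cannot decrease weight, and its graded pieces assemble into an isomorphism of motives; this is exactly Orlov's Conjecture~\ref{conj:orlov} for $X$ and $Y$. Step (2) should then come for free: since hyper-K\"ahler varieties have trivial canonical bundle, the identity $v(\mathcal{E}^\vee\otimes p^*\omega_X[d])={}^tv(\mathcal{E})$ of the K3 case (\S\ref{subsec:weight}) ought to persist, so that $\Gamma$ can be chosen with $\Gamma^{-1}={}^t\Gamma$. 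This leaves (3), which by Lemma~\ref{lem:formal} is equivalent to $\Gamma$ having degree~$1$, i.e.\ $\Gamma^*[Y]=[X]$.

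The multiplicativity step (3) is the heart of the matter and the main obstacle. For K3 surfaces it was handled by the Beauville--Voisin decomposition (Theorem~\ref{thm:bv}): formula~\eqref{eq:bv} expresses $\delta_S$ in terms of $\Delta_S$ and the single canonical zero-cycle $o_S$, reducing (3) to the identity $\Gamma_*o_S=o_{S'}$. No such unconditional decomposition is available in higher dimension. The natural substitute is the conjectural splitting property of the Chow ring of a hyper-K\"ahler variety and, more precisely, the existence of a multiplicative Chow--K\"unneth decomposition (\emph{cf.}~\cite{beauville-splitting, sv, fv}), which would single out a subring of ``distinguished'' cycles---containing the Chern classes of the tangent bundle and the Beauville--Bogomolov class---together with a formula for $\delta_X$ analogous to~\eqref{eq:bv}. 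The plan would then be to show that $\Gamma$ maps distinguished cycles to distinguished cycles and preserves these canonical tautological classes, thereby forcing $(\Gamma\otimes\Gamma\otimes\Gamma)_*\delta_X=\delta_Y$.

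The hard part is twofold. First, Orlov's Conjecture~\ref{conj:orlov} is itself open for hyper-K\"ahler varieties, so even Step~(1) is conditional in general. Second, and more seriously, the multiplicative structure of the Chow ring is at present only conjecturally understood, and there is no known mechanism that reads off from a Fourier--Mukai kernel the compatibility of the induced $\Gamma$ with that structure; controlling how $v(\mathcal{E})$ interacts with the distinguished cycles seems to require genuinely new input. The one regime where the strategy succeeds unconditionally is that of Corollary~\ref{cor:Powers}: there $\h(\Hilb^n(S))$ is described explicitly as an algebra object in terms of the $\h(S^m)$, which is precisely what lets one transport the K3 identity $\Gamma_*o_S=o_{S'}$ through the combinatorics. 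A proof in general would require an analogous explicit hold on the multiplicative motive of an arbitrary hyper-K\"ahler variety.
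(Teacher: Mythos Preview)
The statement is a \emph{conjecture} in the paper, not a theorem; the paper offers no proof and none is known. What the paper does provide is circumstantial evidence: the proposition immediately following the conjecture establishes an isomorphism of \emph{complex} cohomology rings via Hochschild cohomology and the twisted HKR isomorphism, and the subsequent examples (Hilbert schemes of derived equivalent K3 surfaces via Corollary~\ref{cor:Powers}, birational hyper-K\"ahler varieties via Rie\ss, and orbifold resolutions via the motivic hyper-K\"ahler resolution conjecture) record instances where the full statement or fragments of it are known. There is therefore no proof in the paper to compare your proposal against.

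Your proposal is a strategy rather than a proof, and you are explicit about this: you correctly identify Orlov's Conjecture~\ref{conj:orlov} and the existence of a higher-dimensional analogue of the Beauville--Voisin decomposition~\eqref{eq:bv} as the two missing ingredients, which is precisely the paper's own assessment (see the discussion surrounding Question~\ref{conj:main}). One logical slip is worth flagging: you write that, given (1) and (2), condition (3) ``by Lemma~\ref{lem:formal} is equivalent to $\Gamma$ having degree~$1$''. That is not what the lemma says. Proposition~\ref{prop:RigidAlgIso} shows that for an \emph{algebra} isomorphism, orthogonality is equivalent to degree~$1$; but given only (1) and (2), what remains for (3) is precisely that $\Gamma$ be an algebra morphism, and orthogonality plus degree~$1$ do not by themselves force this. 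In the K3 proof of \S\ref{S:mainthm} the Beauville--Voisin formula reduces the algebra-morphism condition to the single identity $\Gamma_* o_S = o_{S'}$; a higher-dimensional argument would likewise have to reduce multiplicativity to a finite list of identities among distinguished cycles, not to a degree condition.
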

	
	The following result is known to the experts\,; it answers the last part of
	Conjecture~\ref{conj:HK} for cohomology with complex coefficients.
	\begin{prop}
		Let $X$ and $Y$ be two derived equivalent projective hyper-K\"ahler varieties.
		Then their cohomology rings with complex coefficients are isomorphic as
		$\C$-algebras.
	\end{prop}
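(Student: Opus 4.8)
The plan is to reduce the statement to the structure theory of the cohomology of hyper-K\"ahler varieties governed by the Looijenga--Lunts--Verbitsky (LLV) Lie algebra, and to feed in the known compatibility of Fourier--Mukai equivalences with this structure. Recall that for a projective hyper-K\"ahler variety $X$ of dimension $2n$ the total cohomology $\HH^*(X,\C)$ carries a canonical action of the LLV Lie algebra $\mathfrak{g}(X)$, which by Verbitsky and Looijenga--Lunts is isomorphic to $\mathfrak{so}(\widetilde{\HH}^2(X,\C))$, where $\widetilde{\HH}^2(X,\C) := \HH^2(X,\C)\oplus \C^2$ is the extended degree-two cohomology equipped with the (extended) Beauville--Bogomolov form. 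The Lefschetz operators $L_\alpha = \alpha\cup(-)$ for $\alpha\in\HH^2(X,\C)$, together with their transpose operators and the grading, generate $\mathfrak{g}(X)$; in particular, multiplication by degree-two classes is already encoded in the $\mathfrak{g}(X)$-module structure of $\HH^*(X,\C)$.

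First I would invoke the compatibility of a derived equivalence with the LLV structure. A Fourier--Mukai equivalence $\Phi_{\mathcal{E}}\colon \D^b(X)\xrightarrow{\sim}\D^b(Y)$ induces, via the Mukai vector $v(\mathcal{E})$, a $\Q$-linear isomorphism $\Phi^{\HH}\colon \HH^*(X,\Q)\to\HH^*(Y,\Q)$ on total cohomology. By Taelman's theorem on derived equivalences of hyper-K\"ahler varieties, this transform is compatible with the LLV algebras: there is an isomorphism $\mathfrak{g}(X)\xrightarrow{\sim}\mathfrak{g}(Y)$ with respect to which $\Phi^{\HH}$ becomes equivariant, after correcting by an inner automorphism of $\mathfrak{g}(Y)$ that accounts for the grading shift of the transform, and its restriction to extended cohomology is an isometry $\widetilde{\HH}^2(X,\C)\xrightarrow{\sim}\widetilde{\HH}^2(Y,\C)$. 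In particular $b_2(X)=b_2(Y)$, the two LLV algebras are both isomorphic to $\mathfrak{so}_{b_2(X)+2}(\C)$, and extending scalars to $\C$ one obtains an isomorphism $\HH^*(X,\C)\simeq\HH^*(Y,\C)$ of $\mathfrak{g}$-modules.

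The final step is to upgrade this $\mathfrak{g}$-module isomorphism to an isomorphism of $\C$-algebras. Here I would appeal to the structure theorem, going back to Verbitsky and Bogomolov and made precise through the LLV decomposition, that the graded $\C$-algebra $\HH^*(X,\C)$ is reconstructible from its $\mathfrak{g}(X)$-module structure: the Verbitsky subalgebra generated by $\HH^2(X,\C)$ is the quotient of $\Sym^\bullet\HH^2(X,\C)$ by the Verbitsky relations, which depend only on $(\HH^2(X,\C),q)$ and on $n$; and the remaining products are constrained, on each isotypic component, by $\mathfrak{g}$-equivariance together with Schur's lemma and Poincar\'e duality, up to nonzero scalars. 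Transporting this reconstruction along the module isomorphism of the previous step, and matching the scalars, would then yield the desired $\C$-algebra isomorphism $\HH^*(X,\C)\simeq\HH^*(Y,\C)$.

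The hard part will be precisely this last step: the cup product is \emph{not} $\mathfrak{g}$-equivariant for the naive diagonal action on the tensor square, so recovering the \emph{full} ring structure — and not merely multiplication by $\HH^2$ — from the module structure requires genuine care. One must control the products of classes lying outside the Verbitsky component, where the module structure determines the multiplication only up to scalars, and choose these scalars coherently so that associativity and Poincar\'e duality hold simultaneously. It is exactly here that working over $\C$ is essential: we are freed from matching Hodge structures and may extract the square roots needed to normalize these scalars, much as in the proof of Proposition~\ref{prop:AV}$(iii)$. I would expect no difficulty when $X$ and $Y$ happen to be of the same deformation type, since then they are diffeomorphic and the conclusion is immediate; the real content of the proposition lies in the potentially non-deformation-equivalent case, for which the LLV route above seems unavoidable.
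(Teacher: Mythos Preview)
Your route via the LLV algebra and Taelman's compatibility theorem is plausible for the first two steps, but the third step --- reconstructing the full ring structure from the $\mathfrak{g}$-module structure --- is not an established theorem, and you correctly flag it as the hard part without resolving it. The Verbitsky subalgebra generated by $\HH^2$ is indeed determined by $(\HH^2,q,n)$, but products among classes in the other LLV isotypic components (already present for $K3^{[n]}$-type with $n\geq 2$, and for OG10) are not known to be pinned down by the module structure alone. Your sentence ``the remaining products are constrained\ldots up to nonzero scalars'' hides a genuine difficulty: when multiplicities occur there can be several $\mathfrak{g}$-equivariant maps $V_\lambda\otimes V_\mu\to V_\nu$, and no general argument is given that the resulting family of scalars can be chosen coherently so as to satisfy associativity. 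As written, the proposal is a programme rather than a proof.

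The paper takes a completely different and much shorter route that avoids this obstacle entirely. A Fourier--Mukai equivalence lifts to dg categories and therefore induces a graded $\C$-algebra isomorphism on Hochschild cohomology $\HHH^*(X)\simeq\HHH^*(Y)$. The twisted HKR isomorphism (Kontsevich; Calaque--Van den Bergh; Huybrechts--Nieper-Wi\ss kirchen) then identifies $\HHH^*(X)$ with $\bigoplus_{i+j=*}\HH^i(X,\bigwedge^j T_X)$ as $\C$-algebras. Finally --- and this is where the hyper-K\"ahler hypothesis enters --- the symplectic form gives $T_X\simeq\Omega_X$, so polyvector fields coincide with Hodge cohomology and hence with $\HH^*(X,\C)$. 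Chaining these isomorphisms yields the result. No LLV structure theory is invoked, and nothing needs to be reconstructed from a module structure; the only cost is that the isomorphism so produced is not visibly graded in the cohomological sense, nor visibly Frobenius.
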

	\begin{proof}\footnote{It is not clear to the authors whether the isomorphism
			constructed in the proof preserves the Frobenius structure.}
		As any exact equivalence $\operatorname{D}^{b}(X)\simeq
		\operatorname{D}^{b}(Y)$
		is given by Fourier--Mukai kernel, it lifts naturally to an equivalence of
		differential graded categories. Therefore we have an isomorphism of graded
		$\C$-algebras between their Hochschild cohomology:
		$$\HHH^{*}(X)\simeq \HHH^{*}(Y).$$ 
		By a result of Huybrechts--Nieper-Wisskirchen \cite{HNW} (see Calaque--Van den Bergh \cite{CVdB} in a greater generality), which was also previously
		announced by Kontsevich, the Hochschild--Kostant--Rosenberg isomorphism
		twisted
		by the square root of the Todd genus gives rise to an isomorphism of
		$\C$-algebras
		$$\HHH^{*}(X)\simeq \bigoplus_{i+j=*}\HH^{i}(X, \bigwedge^{j}T_{X}).$$ Now the
		symplectic forms on $X$ induce an isomorphism between $T_{X}$ and
		$\Omega_{X}$,
		which yields isomorphisms of $\C$-algebras:   $$\bigoplus_{i+j=*}\HH^{i}(X,
		\bigwedge^{j}T_{X})\simeq \bigoplus_{i+j=*}\HH^{i}(X, \Omega^{j}_{X})\simeq
		\HH^{*}(X, \C).$$
		We can conclude by combining these isomorphisms.
	\end{proof}
	
	There are not so many known examples of derived equivalent hyper-K\"ahler
	varieties. Let us test Conjecture \ref{conj:HK} for the available ones.
	\begin{ex}
		Let $S$ and $S'$ be two derived equivalent K3 surfaces. Then for any $n\in
		\mathbb N^{*}$, the $n$-th Hilbert schemes $\Hilb^{n}(S)$ and $\Hilb^{n}(S')$
		are
		derived
		equivalent. Indeed, by combining the results of Bridgeland--King--Reid
		\cite{BKR} and Haiman \cite{Haiman}, we have exact linear equivalences of
		triangulated categories\,:
		$$\D^{b}(\Hilb^{n}(S))\simeq
		\D^{b}(\mathfrak{S}_{n}\!-\!\operatorname{Hilb}(S^{n}))\simeq
		\D^{b}_{\mathfrak
			S_{n}}(S^{n}),$$
		and similarly for $S'$\,; the Fourier--Mukai kernel
		$\mathcal{E}\boxtimes\cdots
		\boxtimes \mathcal{E}$ induces an equivalence $$\D^{b}_{\mathfrak
			S_{n}}(S^{n})\simeq \D^{b}_{\mathfrak S_{n}}(S'^{n}),$$ where $\mathcal E\in
		\D^{b}(S\times S')$ is the original Fourier--Mukai kernel inducing the
		equivalence between $\D^{b}(S)$ and $\D^{b}(S')$. We showed in Corollary
		\ref{cor:Powers} that $\h(\Hilb^{n}(S))$
		and
		$\h(\Hilb^{n}(S'))$ are isomorphic as  Frobenius algebra objects.
	\end{ex}
	
	\begin{ex}
		Conjecturally two birationally equivalent hyper-K\"ahler varieties are derived
		equivalent \cite[Conjecture 6.24]{HuybrechtsFMBook}. Thanks to the result of
		Rie\ss~\cite{Riess}, or rather its proof, we know that birational
		hyper-K\"ahler varieties have
		isomorphic Chow motives as Frobenius algebra objects, hence compatible with
		Conjecture
		\ref{conj:HK}.
		There are by now some cases where the derived equivalence is known. The
		easiest
		example might be the so-called Mukai flop. Another instance of interest is as
		follows\,: given a projective K3 surface $S$ and a Mukai vector $v$, when the
		stability condition $\sigma$ varies in the chambers of the distinguished
		component $\operatorname{Stab}^{*}(S)$ of the manifold of stability conditions
		on $\D^{b}(S)$, the moduli spaces $M_{\sigma}(v)$ of $\sigma$-stable objects
		are
		all birational to each other, and their derived equivalence has been announced
		by Halpern-Leistner in~\cite{DHL}.
		
	\end{ex}

	\begin{ex}
		If one is willing to enlarge a bit the category of hyper-K\"ahler varieties to
		that of hyper-K\"ahler \emph{orbifolds}\footnote{Here by an \emph{orbifold} we
			mean a smooth proper Deligne--Mumford stack with trivial generic
			stabilizer.},
		Conjecture \ref{conj:HK} is closely related to the so-called \emph{motivic
			hyper-K\"ahler resolution conjecture} investigated in \cite{ftv} and
		\cite{FT}.
		Indeed, let $M$ be a projective holomorphic symplectic variety endowed with a
		faithful
		action of a finite group $G$ by symplectic automorphisms. The quotient stack
		$[M/G]$ is a hyper-K\"ahler (or rather symplectic) orbifold. If the main
		component of the $G$-invariant
		Hilbert scheme $X:=G\!-\!\Hilb(M)$ is a symplectic (or equivalently crepant)
		resolution of the singular variety $M/G$, then by Bridgeland--King--Reid
		\cite[Corollary~1.3]{BKR} there is an equivalence of derived categories
		$\D^{b}(X)\simeq \D^{b}([M/G])$. On the other hand, the motivic hyper-K\"ahler
		resolution conjecture \cite{ftv} predicts that the orbifold motive of $[M/G]$
		endowed with the \emph{orbifold product} is
		isomorphic to the motive of $X$ as algebra objects. In this sense, forgetting
		the Frobenius structure, we can obtain some evidences for the orbifold
		analogue of Conjecture \ref{conj:HK}\,: for example between a K3 orbifold and
		its minimal resolution by~\cite{FTsurface}, between
		$[\ker(A^{n+1}\xrightarrow{+} A)/\mathfrak{S}_{n}]$ and the $n$-th generalized
		Kummer variety associated to an abelian surface $A$ by \cite{ftv}, and between
		$[S^{n}/\mathfrak{S}_{n}]$ and the $n$-th Hilbert scheme of a K3 surface $S$ by
		\cite{FT}. 
		In
		fact, the authors suspect that the motivic hyper-K\"ahler resolution
		conjecture
		can be stated more strongly as an isomorphism of Frobenius algebra objects
		with
		\emph{complex} coefficients, and the proofs of our aforementioned results do
		confirm this stronger version. 
	\end{ex}

	\section{Chern classes of Fourier--Mukai equivalences between K3 surfaces}
	\label{sec:v2}
	
	The aim of this final section is to provide evidence for the fact that the
	Chern classes of Fourier--Mukai equivalences between two K3 surfaces $S$ and
	$S'$ define ``distinguished'' classes in the Chow ring of $S\times S'$, in the
	sense that they can be added to the Beauville--Voisin ring of $S\times S'$ and
	the resulting ring would still inject into cohomology \emph{via} the cycle class
	map.
	
	\subsection{The Beauville--Voisin ring, and generalizations}\label{subsec:mck}
	Let $S$ be a K3 surface and define its Beauville--Voisin ring $R^*(S)$ to be
	the subring of $\CH^*(S)$ generated by divisors and Chern classes of the
	tangent
	bundle. By Beauville--Voisin's Theorem~\ref{thm:bv}, this ring has the property
	that it injects into cohomology via the cycle class map.
	\medskip
	
	Let $\h(S) = \h^0(S)\oplus \h^2(S) \oplus \h^4(S)$ be the Chow--K\"unneth
	decomposition
	induced by $\pi^0_S = o_S\times S$, $\pi^4_S = S\times o_S$ and $\pi_S^2 =
	\Delta_S - \pi_S^0 - \pi_S^4$.
	In \cite[Proposition~8.14]{sv} it was observed that the decomposition of the
	small diagonal \eqref{eq:bv} is equivalent to the above Chow--K\"unneth
	decomposition
	being \emph{multiplicative}, meaning that the multiplication morphism $\h(S)
	\otimes \h(S) \to \h(S)$ is compatible with the grading given by the
	Chow--K\"unneth
	decomposition.
	\medskip
	
	The following (formal) facts about multiplicative Chow--K\"unneth
	decompositions will be
	used.
	Let $X$ and $Y$ be two smooth projective varieties, both having motive endowed
	with a multiplicative Chow--K\"unneth decomposition. Then
	\cite[Theorem~8.6]{sv} the
	\emph{product Chow--K\"unneth decomposition} $\h^n(X\times Y) =
	\bigoplus_{i+j=n}
	\h^i(X)\otimes \h^j(Y)$ is multiplicative. Moreover, if $p:X\times Y \to X$
	denotes the projection, then $p^* : \h(X) \to \h(X\times Y)$ is graded
	(\emph{i.e.} compatible with the Chow--K\"unneth decompositions) and $p_* :
	\h(X\times
	Y)
	\to \h(X)$ shifts the gradings by $-2\dim Y$. 
	
	A Chow--K\"unneth decomposition on the motive of $X$ induces a bigrading on the
	Chow
	groups of $X$ given by 
	$$\CH^i(X)_{(j)} := \CH^i(\h^{2i-j}(X)),$$ 
	which in case the Chow--K\"unneth decomposition is multiplicative satisfies
	$$\CH^i(X)_{(j)} \cdot \CH^{i'}(X)_{(j')} \subseteq \CH^{i+i'}(X)_{(j+j')} .$$
	Given smooth projective varieties endowed with multiplicative Chow--K\"unneth
	decompositions, the products of which are endowed with the product
	Chow--K\"unneth
	decompositions, we therefore see that $\CH^*(-)_{(0)}$ defines a subalgebra of
	$\CH^*(-)$ that is stable under pushforwards and pullbacks along projections,
	and stable under composition of correspondences belonging to $\CH^*(-\times
	-)_{(0)}$.\medskip
	
	Murre's conjecture~\ref{conj:murre}\eqref{B} and \eqref{D} imply that
	$\CH^i(X)_{(0)} := \CH^i(\h^{2i}(X))$ injects in cohomology with image the
	Hodge classes for any choice of
	Chow--K\"unneth decomposition. (This is known unconditionally in the case $i
	=0$ and $i = \dim
	X$.)
	In particular, in the above situation of smooth projective varieties endowed
	with multiplicative Chow--K\"unneth decompositions, it is expected that the
	subalgebra
	$\CH^*(-)_{(0)}$ injects into cohomology
	with image the Hodge classes. In that sense, $\CH^*(-)_{(0)}$ is a maximal
	subalgebra of $\CH^*(-)$ with the property that it injects into cohomology
	\emph{via} the cycle class map.
	
	\subsection{Adding the second Chern class of Fourier--Mukai equivalences to
		the BV ring}
	Recall the following theorem of Huybrechts \cite[Theorem~2]{huybrechts-JEMS}
	and
	Voisin \cite[Corollary~1.10]{voisin-ogrady}.
	
	\begin{thm}[Huybrechts, Voisin]\label{thm:hv}
		Let $\Phi_\mathcal{E} :  \operatorname{D}^b(S)
		\stackrel{\sim}{\longrightarrow}  \operatorname{D}^b(S')$ be an exact linear
		equivalence between K3 surfaces with Fourier--Mukai kernel $\mathcal{E} \in
		\operatorname{D}^b(S\times S')$. Then
		$v(\mathcal{E})$ preserves the Beauville--Voisin ring.
	\end{thm}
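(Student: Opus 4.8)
The plan is to first reduce the statement to a handful of assertions about $0$-cycles, and then to recognize those as the genuinely Chow-theoretic content resolved by Beauville--Voisin's theory. Recall that for a K3 surface one has $R^*(S)=\Q\,[S]\oplus \mathrm{NS}(S)_\Q\oplus \Q\,o_S$, and, by \S\ref{subsec:mck} (following \cite{sv}), that this is exactly the degree-$0$ part $\CH^*(S)_{(0)}$ of the bigrading attached to the \emph{multiplicative} Chow--K\"unneth decomposition of $S$ given by $\pi^0_S=o_S\times S$, $\pi^4_S=S\times o_S$ and $\pi^2_S=\Delta_S-\pi^0_S-\pi^4_S$. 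Since $v(\mathcal{E})_*$ is additive and $\CH^0(S')=R^0(S')$ while $\CH^1(S')=\mathrm{NS}(S')_\Q=R^1(S')$ (because $\Pic^0$ is trivial on a K3), the components of $v(\mathcal{E})_*(g)$ landing in $\CH^0(S')$ and $\CH^1(S')$ are automatically in $R^*(S')$ for every generator $g$. Thus the whole theorem reduces to showing that the $\CH^2(S')$-valued components lie in $\Q\,o_{S'}$, that is, that the three $0$-cycles $p_{S'*}[v(\mathcal E)]^4$, $[v(\mathcal E)]^3_*D$ (for $D$ a divisor) and $[v(\mathcal E)]^2_*o_S$ are multiples of $o_{S'}$, where $[v(\mathcal E)]^k$ denotes the codimension-$k$ component of the Mukai vector.

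The cleanest way to package all three at once is as follows. Endow $S\times S'$ with the product multiplicative Chow--K\"unneth decomposition (multiplicative by \cite{sv}). By \S\ref{subsec:mck}, $\CH^*(-)_{(0)}$ is stable under pullback and pushforward along projections and under composition of correspondences lying in $\CH^*(-\times-)_{(0)}$; since $v(\mathcal E)_*\alpha=p_{S'*}\big(p_S^*\alpha\cdot v(\mathcal E)\big)$, it therefore suffices to prove
$$v(\mathcal E)\in \CH^*(S\times S')_{(0)}.$$
Writing $v(\mathcal E)=\operatorname{ch}(\mathcal E)\cdot\sqrt{\operatorname{td}(S\times S')}$, the Todd factor (and its inverse) already lie in $\CH^*(S\times S')_{(0)}$: indeed $\operatorname{td}(S)=1+2o_S\in R^*(S)=\CH^*(S)_{(0)}$ by Theorem~\ref{thm:bv}, the pullbacks $p_S^*,p_{S'}^*$ are graded, $\CH^*(S\times S')_{(0)}$ is a subalgebra, and the square root of $1$ plus a positive-codimension degree-$0$ class is again degree-$0$. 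Consequently it suffices to show $\operatorname{ch}(\mathcal E)\in \CH^*(S\times S')_{(0)}$.

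The hard part is precisely this last membership, and more specifically its top-degree incarnation, namely that the $0$-cycle components of $v(\mathcal E)$ lie in $\Q\,(o_S\times o_{S'})$. This cannot be detected cohomologically, since $\CH_0$ of a K3 surface is infinite-dimensional and $\CH^2(S')_{\mathrm{hom}}$ maps to zero under the cycle class map; it is genuinely a statement in rational equivalence. I would try to control it by combining two inputs. First, the weight analysis of \S\ref{subsec:weight} shows that $[v(\mathcal E)]^2$ induces the isomorphism $\h^2_{\mathrm{tr}}(S)\xrightarrow{\sim}\h^2_{\mathrm{tr}}(S')$ and constrains, via Murre's Conjecture~\ref{conj:murre}\eqref{B} for surfaces (Theorem~\ref{thm:product}\eqref{ii}), where the various summands may map. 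The obstacle, however, is that this weight bookkeeping does \emph{not} by itself kill the component $\pi^2_{\mathrm{tr},S'}\circ [v(\mathcal E)]^2\circ \pi^4_S\colon \h^4(S)\to \h^2_{\mathrm{tr}}(S')$, because $\Hom(\h^4(S),\h^2_{\mathrm{tr}}(S'))=\CH^2(\h^2_{\mathrm{tr}}(S'))$ is the (large) Albanese kernel and is not forced to vanish; in fact this correspondence \emph{is} the transcendental component of the $0$-cycle $[v(\mathcal E)]^2_*o_S$, so killing it is equivalent to the assertion $[v(\mathcal E)]^2_*o_S\in\Q\,o_{S'}$.

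This is where the genuinely Chow-theoretic input of Beauville--Voisin \cite{bv} must enter: that the class of any point on a rational curve is $o_{S'}$, together with the multiplicativity of the small-diagonal decomposition. The natural strategy is a constant-cycle argument — the restrictions of the $2$-cycle $[v(\mathcal E)]^2$ to the fibres $\{x\}\times S'$ over a rational curve $R\subset S$ are all rationally equivalent, since $\CH_0(R)=\CH_0(\mathbb P^1)=\Q$ — and the delicate point, resolved by Huybrechts \cite{huybrechts-JEMS} and Voisin \cite{voisin-ogrady}, is to upgrade the conclusion that the common fibre class is a \emph{constant} cycle to the much stronger conclusion that it is an actual rational multiple of $o_{S'}$. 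I expect this last step to be the main obstacle, precisely because it is invisible to cohomology and requires the full force of the geometry of rational curves on K3 surfaces.
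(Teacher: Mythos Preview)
The paper does not prove Theorem~\ref{thm:hv}; it is quoted as a black box from \cite{huybrechts-JEMS} and \cite{voisin-ogrady} and then \emph{used} as input (for instance, to derive Proposition~\ref{prop:v2}). So there is no in-paper argument to compare your proposal against.

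That said, your strategy has a genuine gap. The initial reduction is correct: since $\CH^0(S')=R^0(S')$ and $\CH^1(S')=R^1(S')$ for a K3 surface, preserving the Beauville--Voisin ring is exactly the conjunction of the three $0$-cycle assertions you isolate. But you then replace those three conditions by the single sufficient condition $v(\mathcal E)\in \CH^*(S\times S')_{(0)}$, and this is precisely the content of the paper's Question~\ref{ques:grade0}, which is posed there as an \emph{open} question strictly stronger than Theorem~\ref{thm:hv}. Indeed, the paper establishes only the codimension-$2$ piece $v_2(\mathcal E)\in \CH^2(S\times S')_{(0)}$ (Proposition~\ref{prop:v2}), and it does so \emph{as a consequence of} Theorem~\ref{thm:hv}, not as a route towards it; the membership of $v_3(\mathcal E)$ and $v_4(\mathcal E)$ in the grade-$0$ piece is, as far as the paper is concerned, unknown. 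So the sufficient condition you aim for is not available, and the logical flow in the paper runs in the opposite direction from the one you propose.

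Your final paragraph does not close the gap either. You correctly identify that the vanishing of $\pi^2_{\mathrm{tr},S'}\circ v_2(\mathcal E)\circ \pi^4_S$ (equivalently, $v_2(\mathcal E)_*o_S\in\Q\,o_{S'}$) is invisible to the weight bookkeeping of \S\ref{subsec:weight}, but you then defer the ``delicate point'' back to \cite{huybrechts-JEMS} and \cite{voisin-ogrady}, which is circular since those are exactly the sources of Theorem~\ref{thm:hv}. Moreover, even granting that step, you have addressed only the $v_2$ condition; the two remaining $0$-cycle conditions, involving $v_3(\mathcal E)$ and $v_4(\mathcal E)$, are left untouched and do not reduce to the $v_2$ statement.
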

	
	In light of the discussion in \S\ref{subsec:mck},
	it is natural to ask whether a more general statement could be true, namely\,:
	
	\begin{ques} \label{ques:grade0}
		Let $\Phi_\mathcal{E} :  \operatorname{D}^b(S)
		\stackrel{\sim}{\longrightarrow}  \operatorname{D}^b(S')$ be an exact linear
		equivalence between K3 surfaces with Fourier--Mukai kernel $\mathcal{E} \in
		\operatorname{D}^b(S\times S')$. Then does
		$v(\mathcal{E})$ belong to $\CH^*(S\times S')_{(0)}$\,?
	\end{ques}
	
	For $i=0$ or $1$, the Mukai vectors $v_i(\mathcal E)$ obviously belong to
	$\CH^i(S\times S')_{(0)}$, since in those cases $\CH^i(S\times S')=
	\CH^i(S\times S')_{(0)}$.
	In the case of $v_2(\mathcal{E})$, this can be deduced from
	Theorem~\ref{thm:hv}\,:
	
	\begin{prop}\label{prop:v2}
		Let $\Phi_\mathcal{E} :  \operatorname{D}^b(S)
		\stackrel{\sim}{\longrightarrow}  \operatorname{D}^b(S')$ be an exact
		equivalence with Fourier--Mukai kernel $\mathcal{E} \in
		\operatorname{D}^b(S\times S')$. Then
		$v_2(\mathcal{E})$ belongs to $\CH^2(S\times S')_{(0)}$.
	\end{prop}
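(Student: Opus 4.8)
The plan is to reinterpret the statement ``$v_2(\mathcal{E})\in\CH^2(S\times S')_{(0)}$'' as a gradedness condition on $v_2(\mathcal{E})$ viewed as a morphism of motives, to dispose of most components by Murre's conjecture~\eqref{B} for surfaces, and to treat the two genuinely nontrivial components with Theorem~\ref{thm:hv}.

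First I would record that $v_2(\mathcal{E})\in\CH^2(S\times S')=\Hom(\h(S),\h(S'))$ is a degree-zero correspondence and that, with respect to the product Chow--K\"unneth decomposition and the rigidity isomorphisms $\h^i(S)^\vee\simeq\h^{4-i}(S)(2)$, the summand $\Hom(\h^i(S),\h^j(S'))$ of $\CH^2(S\times S')$ equals $\CH^2(\h^{4-i}(S)\otimes\h^j(S'))$ and therefore sits in grade $i-j$. Consequently $v_2(\mathcal{E})$ lies in $\CH^2(S\times S')_{(0)}$ if and only if its off-diagonal components $\pi^j_{S'}\circ v_2(\mathcal{E})\circ\pi^i_S$ vanish for all $i\neq j$ (only $i,j\in\{0,2,4\}$ occur, the odd pieces being zero for K3 surfaces). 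By \eqref{eq}, that is by Proposition~\ref{prop:formal} applied with $d_X=2$ and $k=0$ together with Theorem~\ref{thm:product}\eqref{ii}, one has $\Hom(\h^i(S),\h^j(S'))=0$ whenever $i<j$ or $i>j+2$\,; hence the only components left to kill are those of grade $+2$, namely $(i,j)=(4,2)$ and $(i,j)=(2,0)$.

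For the component $(4,2)$, set $\phi:=\pi^2_{S'}\circ v_2(\mathcal{E})\circ\pi^4_S\in\Hom(\h^4(S),\h^2(S'))$. Since $\h^4(S)\simeq\mathds{1}(-2)$ and $o_S$ is a generator of $\Hom(\mathds{1}(-2),\h^4(S))=\CH^2(S)_{(0)}$, the morphism $\phi$ is determined by $\phi_*(o_S)$, which is exactly the $\CH^2(S')_{(2)}$-component of $v_2(\mathcal{E})_*o_S$. Now $v_2(\mathcal{E})_*o_S$ is the degree-two part of $v(\mathcal{E})_*o_S=v\big(\Phi_{\mathcal{E}}(\mathcal{O}_x)\big)$, where $x$ is a point lying on a rational curve\,; by Theorem~\ref{thm:hv} the latter lies in the Beauville--Voisin ring $R^*(S')$, whose degree-two part is $R^2(S')=\Q\,o_{S'}=\CH^2(S')_{(0)}$. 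Thus the $\CH^2(S')_{(2)}$-component of $v_2(\mathcal{E})_*o_S$ vanishes, and so $\phi=0$.

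Finally, for the component $(2,0)$ I would pass to the transpose. Writing $\psi:=\pi^0_{S'}\circ v_2(\mathcal{E})\circ\pi^2_S$, one computes ${}^t\psi=\pi^2_S\circ{}^tv_2(\mathcal{E})\circ\pi^4_{S'}$ using ${}^t\pi^0_{S'}=\pi^4_{S'}$ and ${}^t\pi^2_S=\pi^2_S$\,; since ${}^tv_2(\mathcal{E})=v_2(\mathcal{E}^\vee\otimes p^*\omega_S)$ is the degree-two Mukai vector of the Fourier--Mukai kernel of the inverse equivalence $\Phi_{\mathcal{E}}^{-1}\colon\operatorname{D}^b(S')\stackrel{\sim}{\longrightarrow}\operatorname{D}^b(S)$, the cycle ${}^t\psi$ is precisely the $(4,2)$-component attached to $\Phi_{\mathcal{E}}^{-1}$. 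Applying the previous paragraph to $\Phi_{\mathcal{E}}^{-1}$ (Theorem~\ref{thm:hv} applies to it as well, now tested against $o_{S'}$) yields ${}^t\psi=0$, hence $\psi=0$. I expect the crux to be the $(4,2)$ step: the real content is not the weight bookkeeping but extracting, from the single ring-preservation statement of Theorem~\ref{thm:hv}, the vanishing of one prescribed graded component\,; the key observation making this work is that a morphism out of $\h^4(S)\simeq\mathds{1}(-2)$ is pinned down by its value on the generator $o_S$, so that preservation of the Beauville--Voisin ring converts directly into the gradedness of $v_2(\mathcal{E})$.
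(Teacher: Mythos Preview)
Your proof is correct and follows essentially the same route as the paper's: reduce to the two grade-$2$ components $(4,2)$ and $(2,0)$, kill the first via Theorem~\ref{thm:hv} by evaluating on $o_S$, and handle the second by transposition to the inverse equivalence. The only cosmetic differences are that the paper writes the grade-$2$ piece as $\pi^2_{\mathrm{tr},S'}\circ\gamma\circ\pi^4_S+\pi^0_{S'}\circ\gamma\circ\pi^2_{\mathrm{tr},S}$ (using $\pi^2_{\mathrm{tr}}$ rather than your $\pi^2$, which is harmless since $\Hom(\h^4(S),\h^2_{\mathrm{alg}}(S'))=\Hom(\h^2_{\mathrm{alg}}(S),\h^0(S'))=0$) and compresses your transpose argument into the single word ``Likewise''.
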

	\begin{proof}
		Since $\CH^2(S\times S') = \CH^2(S\times S')_{(0)} \oplus \CH^2(S\times
		S')_{(2)}$, 
		it is enough to show that $v_2(\mathcal{E})_{(2)} = 0$. Let $\gamma$ be any
		cycle in $\CH^2(S\times S')$. On the one hand, we have $\gamma_{(2)} =
		\pi^2_{\mathrm{tr},S'} \circ \gamma \circ \pi^4_S + \pi^0_{S'} \circ \gamma
		\circ
		\pi^2_{\mathrm{tr},S}$. On the other hand, we have $\gamma \circ \pi^4_S =
		(p')^*\gamma_*
		o_S$. Now setting $\gamma = v_2(\mathcal{E})$, Theorem~\ref{thm:hv} yields
		that $v_2(\mathcal E) \circ \pi^4_S$ is a multiple of $(p')^*o_{S'}$ and
		henceforth since $(\pi^2_{\mathrm{tr},S'})_*o_{S'} =0$ that
		$\pi^2_{\mathrm{tr},S'} \circ \gamma \circ \pi^4_S = 0$. Likewise, we have 
		$\pi^0_{S'} \circ v_2(\mathcal E)
		\circ
		\pi^2_{\mathrm{tr},S} = 0$, and the
		proposition is established.
	\end{proof}
	
	We deduce from \S \ref{subsec:mck} the following
	\begin{thm}\label{thm:product-c2}
		Let $\tilde{R}^*(S\times S')$ be the subring of $\CH^*(S\times S')$
		generated by divisors, $p^*c_2(S)$, $(p')^*c_2(S')$ and $c_2(\mathcal{E})$,
		where $\mathcal{E}$ runs through the objects in $\mathrm{D}^b(S\times S')$
		inducing exact linear equivalences
		$ \operatorname{D}^b(S)
		\stackrel{\sim}{\longrightarrow}  \operatorname{D}^b(S')$.
		The cycle class map $\tilde{R}^n(S\times S') \to \HH^{2n}(S\times S',\Q)$ is
		injective
		for $n=3,4$. In particular, if $\Phi_{\mathcal{E}_{1}}, \Phi_{\mathcal{E}_{2}}
		:  \operatorname{D}^b(S) \stackrel{\sim}{\longrightarrow} 
		\operatorname{D}^b(S')$ are two exact linear equivalences with Fourier--Mukai
		kernels
		$\mathcal{E}_{1}, \mathcal{E}_{2} \in \operatorname{D}^b(S\times S')$, then
		$c_2(\mathcal{E}_{1})\cdot c_2(\mathcal{E}_{2}) \in \Z[ o_S\times
		o_{S'}]$.\qed
	\end{thm}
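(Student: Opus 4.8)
The plan is to show that the whole ring $\tilde R^*(S\times S')$ is contained in the grade-$0$ part $\CH^*(S\times S')_{(0)}$ attached to the product multiplicative Chow--K\"unneth decomposition, and then to invoke injectivity of the cycle class map on this subalgebra in degrees $3$ and $4$. First I would check that every generator lands in grade $0$. For divisors this is automatic: since $S$ and $S'$ are K3 surfaces, $\h^1$ and $\h^3$ of each factor vanish, so $\h^2(S\times S') = \h^2(S)\otimes\h^0(S')\oplus\h^0(S)\otimes\h^2(S')$ and hence $\CH^1(S\times S') = \CH^1(S\times S')_{(0)}$. The classes $p^*c_2(S) = 24\,p^*o_S$ and $(p')^*c_2(S')$ are grade $0$ because $o_S\in\CH^2(S)_{(0)}$ and pullbacks along projections are graded (\S\ref{subsec:mck}). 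For $c_2(\mathcal{E})$ I would use the explicit Todd genus of a K3 surface, namely $\sqrt{\mathrm{td}(S\times S')} = (1+p^*o_S)(1+(p')^*o_{S'})$, to compute the degree-$2$ part of the Mukai vector as $v_2(\mathcal{E}) = \mathrm{ch}_2(\mathcal{E}) + \rk(\mathcal{E})\,(p^*o_S + (p')^*o_{S'})$. Combined with $\mathrm{ch}_2(\mathcal{E}) = \frac{1}{2}c_1(\mathcal{E})^2 - c_2(\mathcal{E})$ this gives $c_2(\mathcal{E}) = \frac{1}{2}c_1(\mathcal{E})^2 - v_2(\mathcal{E}) + \rk(\mathcal{E})(p^*o_S + (p')^*o_{S'})$, where the divisor square is grade $0$ by multiplicativity, the term $v_2(\mathcal{E})$ is grade $0$ by Proposition~\ref{prop:v2}, and the Beauville--Voisin terms are grade $0$ as above. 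As $\CH^*(S\times S')_{(0)}$ is a subalgebra, it follows that $\tilde R^*(S\times S')\subseteq\CH^*(S\times S')_{(0)}$.

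Next I would verify injectivity of the cycle class map on $\CH^n(S\times S')_{(0)}$ for $n=3,4$. For $n=4=\dim(S\times S')$ this is the top-degree case: $\h^8(S\times S') = \mathds{1}(-4)$ and the cycle class map $\CH^4(S\times S')_{(0)}\to\HH^8(S\times S',\Q)$ is an isomorphism onto a one-dimensional space. For $n=3$, using $\h^1=\h^3=0$ one computes $\h^6(S\times S') = \h^4(S)\otimes\h^2(S')\oplus\h^2(S)\otimes\h^4(S')$, so that $\CH^3(S\times S')_{(0)}\cong\mathrm{NS}(S')_\Q\oplus\mathrm{NS}(S)_\Q$; since the N\'eron--Severi group of a K3 surface injects into $\HH^2$ and the two summands land in distinct K\"unneth components of $\HH^6$, injectivity follows. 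This gives injectivity on $\tilde R^n$ for $n=3,4$.

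For the final assertion, both $c_2(\mathcal{E}_1)$ and $c_2(\mathcal{E}_2)$ lie in $\tilde R^2$, hence their product lies in $\tilde R^4\subseteq\CH^4(S\times S')_{(0)}$. Since $\HH^8(S\times S',\Q)$ is one-dimensional, spanned by the class of $o_S\times o_{S'}$, injectivity on $\tilde R^4$ forces $c_2(\mathcal{E}_1)\cdot c_2(\mathcal{E}_2) = \lambda\,(o_S\times o_{S'})$ for some $\lambda\in\Q$. Taking degrees yields $\lambda = \deg\bigl(c_2(\mathcal{E}_1)\cdot c_2(\mathcal{E}_2)\bigr)$, which is an integer because Chern classes are integral, whence membership in $\Z[o_S\times o_{S'}]$.

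I expect the only genuine input to be Proposition~\ref{prop:v2} (itself resting on the Huybrechts--Voisin Theorem~\ref{thm:hv}); the remaining steps are the formal grade-$0$ calculus of \S\ref{subsec:mck} together with the elementary Todd-class bookkeeping relating $c_2(\mathcal{E})$ to $v_2(\mathcal{E})$. The most delicate point is ensuring that $c_2(\mathcal{E})$ lands in grade $0$ rather than merely in $\tilde R^*$, which is precisely where the explicit Todd genus of a K3 surface and Proposition~\ref{prop:v2} are both needed; once that is in place, the injectivity statements reduce to the known top-degree case and to the fact that N\'eron--Severi groups inject into cohomology.
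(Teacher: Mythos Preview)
Your argument is correct and is exactly the proof the paper has in mind: the theorem is stated with a bare \qed\ immediately after the sentence ``We deduce from \S\ref{subsec:mck} the following'', so the intended proof is precisely to show that all generators of $\tilde R^*(S\times S')$ lie in $\CH^*(S\times S')_{(0)}$ (using Proposition~\ref{prop:v2} for $c_2(\mathcal{E})$ via the Todd-class identity you wrote out) and then invoke the injectivity of the cycle class map on the grade-$0$ piece. You have in fact supplied more detail than the paper does, in particular the explicit handling of the case $n=3$ via the identification $\CH^3(S\times S')_{(0)}\cong\mathrm{NS}(S')_\Q\oplus\mathrm{NS}(S)_\Q$, which the paper leaves implicit.
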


	\subsection{Some speculations concerning Chern classes of twisted derived
		equivalent K3 surfaces}
	It is natural to ask whether Theorem~\ref{thm:hv} extends to derived
	equivalences between twisted K3 surfaces\,:
	
	\begin{ques} \label{ques:grade0twisted}
		Let $\Phi_\mathcal{E} :  \operatorname{D}^b(S,\alpha)
		\stackrel{\sim}{\longrightarrow}  \operatorname{D}^b(S',\alpha')$ be an exact
		equivalence between twisted K3 surfaces with Fourier--Mukai kernel
		$\mathcal{E} \in
		\operatorname{D}^b(S\times S',\alpha^{-1} \boxtimes \alpha')$. Then does
		$v(\mathcal{E})$ preserve the Beauville--Voisin ring\,? More generally, does
		$v(\mathcal{E})$ belong to $\CH^*(S\times S')_{(0)}$\,?
	\end{ques}
	
	We note that if $v(\mathcal{E})$ preserves the Beauville--Voisin ring, then the
	same argument as in the proof of Proposition~\ref{prop:v2} gives that
	$v_2(\mathcal{E})$ belongs to $\CH^2(S\times S')_{(0)}$.\medskip
	
	Let us now define $E^*(S\times S')$ to be the
	subalgebra of $\CH^*(S\times S')$ generated by divisors, $p^*c_2(S)$,
	$(p')^*c_2(S')$, and the Chern classes of $\mathcal{E}$, where $\mathcal{E}$
	runs through objects in 
	$\operatorname{D}^b(S\times S',\alpha^{-1} \boxtimes \alpha')$ inducing exact
	equivalences  $\Phi_\mathcal{E} :  \operatorname{D}^b(S,\alpha)
	\stackrel{\sim}{\longrightarrow}  \operatorname{D}^b(S',\alpha')$ for some
	Brauer classes $\alpha \in \mathrm{Br}(S)$ and $\alpha' \in \mathrm{Br}(S')$.
	We then define $\tilde{E}^*(S\times S')$ to be the
	subalgebra of $\CH^*(S\times S')$ generated by
	cycles of the form 
	$$\gamma_{n-1}\circ \cdots \circ \gamma_0,$$
	where $\gamma_i \in E^*(S_i\times S_{i+1})$ for all $i$ for some K3 surfaces
	$S=S_0,S_1,\ldots, S_{n}=S'$. 
	According to the discussion in \S\ref{subsec:mck},
	a positive answer to Question~\ref{ques:grade0twisted} would suggest that the
	following question should have a positive answer.
	
	\begin{ques}
		Does $\tilde{E}^*(S\times S')$ inject into cohomology via the cycle class
		map\,?
	\end{ques}
	
	In particular, if $\HH^2(S,\Q) \simeq \HH^2(S',\Q)$ is an isogeny, then the
	cycle class $v_2(\mathcal{E}_{n-1}) \circ \cdots \circ
	v_2(\mathcal{E}_0)$  inducing the isogeny between $T(S)_\Q :=
	\HH^*(\h^2_{\mathrm{tr}}(S))$ and $T(S')_\Q :=  \HH^*(\h^2_{\mathrm{tr}}(S))$
	(with $\mathcal{E}_0,\dots, \mathcal{E}_{n-1}$ as in \eqref{eq:tde}) should be
	canonically defined, \emph{i.e.} should not depend on the choice of twisted
	derived equivalence between $S$ and $S'$ as in \eqref{eq:tde} inducing the
	isogeny.


	\appendix
	
	\section{Non-isogenous K3 surfaces with isomorphic Hodge
		structures}\label{sect:Appendix}
	
	Recall that two K3 surfaces $S$ and $S'$ are said to be \emph{isogenous} if
	their second rational cohomology groups are Hodge isometric, that is, 
	if	there exists an isomorphism of Hodge structures $f : \HH^2(S,\Q)
	\stackrel{\simeq}{\longrightarrow} \HH^2(S',\Q)$ making the following diagram
	commute\,: 	
	\begin{equation*}
	\xymatrix{\HH^2(S,\Q)\otimes \HH^2(S,\Q) \ar[rr]^{\qquad \cup} \ar[d]^{f\otimes
			f} && \HH^4(S,\Q)\ar[rr]^{\deg} && \Q \ar@{=}[d]\\
		\HH^2(S',\Q)\otimes \HH^2(S',\Q) \ar[rr]^{\qquad \cup} && \HH^4(S',\Q)
		\ar[rr]^{\deg} &&
		\Q.
	}
	\end{equation*}
	We provide in this appendix infinite families of pairwise non-isogenous K3
	surfaces  with isomorphic rational Hodge structures, that is, for any two K3
	surfaces $S$ and $S'$ belonging to  the same  family,  we have $\HH^{2}(S,
	\Q)\simeq \HH^{2}(S',
	\Q)$ as $\Q$-Hodge structures but there does not exist any isomorphism of
	$\Q$-Hodge structures $\HH^{2}(S, \Q)\to \HH^{2}(S', \Q)$ that is compatible
	with the intersection pairings given by $(\alpha,\beta) \mapsto \deg(\alpha\cup
	\beta)$. By Mukai \cite{Mukai}, such K3
	surfaces are not derived equivalent to each other, and actually not even
	through
	a chain of twisted derived equivalences (\emph{cf.} Huybrechts
	\cite[Theorem~0.1]{huybrechts-isogenous}).
	
	In fact, our families have a stronger property\,: any two K3 surfaces $S$ and
	$S'$ in the same family
	have Hodge-isomorphic rational cohomology \emph{algebras}, that is, for any two
	K3
	surfaces $S$ and $S'$ of the family,  we have a graded Hodge isomorphism $g :
	\HH^{*}(S, \Q)\stackrel{\simeq}{\longrightarrow} \HH^{*}(S',
	\Q)$  that respects the cup-product, \emph{i.e.} such that the following
	diagram commutes\,:
	\begin{equation*}
	\xymatrix{\HH^*(S,\Q)\otimes \HH^*(S,\Q) \ar[rr]^{\qquad \cup} \ar[d]^{g\otimes
			g} && \HH^*(S,\Q) \ar[d]^g \\
		\HH^*(S',\Q)\otimes \HH^*(S',\Q) \ar[rr]^{\qquad \cup} && \HH^*(S',\Q).
	}
	\end{equation*}
	We say that $\HH^{*}(S, \Q)$ and  $\HH^{*}(S',
	\Q)$ are \emph{Hodge algebra isomorphic}.

	\subsection{Motivation and statements} 
	The aim of this appendix is to show that, for K3 surfaces, the notion of
	\emph{isogeny}
	is strictly more restrictive than the notion of \emph{Hodge algebra
		isomorphic}. In \S \ref{SS:motivic} this is upgraded motivically\,: we show
	that, for Chow motives of K3 surfaces,
	the notion of being \emph{isomorphic as Frobenius algebra objects} is strictly
	more
	restrictive than the notion of being \emph{isomorphic as algebra objects},
	thereby
	justifying the somewhat technical condition $(iii)$ involving Frobenius
	algebra objects in the motivic Torelli statement of Corollary
	\ref{cor:torelli}.
	\medskip	
	
	First, we provide an infinite family of K3 surfaces whose
	rational
	cohomology rings are all Hodge algebra isomorphic (and, \emph{a fortiori},
	Hodge isomorphic), but they are pairwise non-isogenous.
	Precisely we have

	\begin{thm}\label{thm:non-isogenousK3}
		There exists an infinite family $\{S_i\}_{i \in \Z_{>0}}$  of pairwise
		non-isogenous K3 surfaces such
		that, for all $j, k \in \Z_{>0}$,  $\HH^*(S_j,\Q)$ and $\HH^*(S_k,\Q)$ are
		Hodge algebra isomorphic. Moreover, such a
		family can be chosen to consist of K3 surfaces of maximal Picard rank 20.
	\end{thm}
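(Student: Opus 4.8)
The plan is to work entirely with \emph{singular} K3 surfaces, i.e.\ those of Picard rank $20$, for which the transcendental lattice $T(S)$ is an even positive-definite lattice of rank $2$ carrying a weight-$2$ Hodge structure whose endomorphism algebra $K_S:=\End_{\mathrm{Hdg}}(T(S)_\Q)$ is an imaginary quadratic field. By Shioda--Inose theory every such lattice is realised by a singular K3 surface, so I have complete freedom in prescribing $T(S)_\Q$. The first step is to reformulate the two notions appearing in the statement in terms of the rank-$2$ piece. Since a Hodge morphism of $\HH^2(-,\Q)$ preserves the Hodge decomposition, it preserves $\mathrm{NS}(-)_\Q$ (the $(1,1)$-part) and $T(-)_\Q$; thus being \emph{isogenous} (a Hodge isometry of $\HH^2$) forces a Hodge isometry $T(S)_\Q\to T(S')_\Q$, whereas being \emph{Hodge algebra isomorphic} amounts, by the explicit description of the cup-product algebra of a K3 surface (unit in $\HH^0$, intersection pairing $\HH^2\otimes\HH^2\to\HH^4$), to a Hodge \emph{similitude} of $\HH^2$, that is, a Hodge isometry up to a positive rational scalar.

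The key reduction is that the rank-$20$ algebraic part costs nothing. Recall that $\HH^2(S,\Q)\cong\Lambda_\Q$ for the fixed K3 lattice $\Lambda=U^{3}\oplus E_8(-1)^2$, and that $c\Lambda_\Q\cong\Lambda_\Q$ for every $c\in\Q_{>0}$ because $\Lambda_\Q$ contains a hyperbolic plane. Hence, given any Hodge similitude $\psi\colon T(S)_\Q\to T(S')_\Q$ of factor $c$, Witt's extension theorem (applied to the isometric ambient spaces $(\Lambda_\Q,\tfrac1c q)$ and $(\Lambda_\Q,q)$) extends $\psi$ to a $c$-similitude of $\HH^2(S,\Q)$; this automatically carries $\mathrm{NS}(S)_\Q=T(S)_\Q^\perp$ onto $\mathrm{NS}(S')_\Q$, so no independent matching of the N\'eron--Severi parts is needed. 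I would then assemble the graded Hodge algebra isomorphism as the identity on $\HH^0$, this $c$-similitude on $\HH^2$, and multiplication by $c$ on $\HH^4$. Consequently: $S$ and $S'$ are Hodge algebra isomorphic $\iff$ $T(S)_\Q$ and $T(S')_\Q$ are Hodge similar $\iff$ $K_S\cong K_{S'}$ (two rank-$2$ positive-definite $\Q$-forms being similar exactly when their even Clifford algebras, hence their discriminants modulo squares, agree), while they are isogenous $\iff$ the $\beta$-invariants below agree. Writing $T(S)_\Q\cong(K,\beta_S\cdot N_{K/\Q})$ as a rank-one $K$-module, multiplication by an element of $K^\ast$ is a Hodge map scaling the form by a norm, so a Hodge isometry $T(S)_\Q\to T(S')_\Q$ exists iff $\beta_S/\beta_{S'}\in N_{K/\Q}(K^\ast)$; thus the class $[\beta_S]\in\Q_{>0}^\ast/N_{K/\Q}(K^\ast)$ is a complete isogeny invariant within a fixed $K$.

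With this dictionary in hand the construction is immediate in principle: fix one imaginary quadratic field $K$ and produce infinitely many singular K3 surfaces with $K_{S}\cong K$ realising pairwise distinct classes in $\Q_{>0}^\ast/N_{K/\Q}(K^\ast)$. That this quotient is infinite follows from the Hasse norm theorem together with the existence of infinitely many inert primes; concretely I would take even diagonal forms such as $2x^2+2c_0m^2y^2$ with $c_0$ a fixed squarefree positive integer and $m$ running over suitable products of distinct inert primes, all of which have even Clifford algebra $K=\Q(\sqrt{-c_0})$ but lie in distinct norm-residue classes. Each such form is an even positive-definite rank-$2$ lattice, hence by Shioda--Inose is the transcendental lattice of a singular K3 surface $S_m$ of Picard rank $20$; by the dictionary the $S_m$ are then pairwise non-isogenous yet pairwise Hodge algebra isomorphic, which is the assertion.

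I expect the two genuinely non-formal points to be: (i) the norm-class computation showing that the chosen forms fall into infinitely many distinct classes of $\Q_{>0}^\ast/N_{K/\Q}(K^\ast)$ (and that this group is infinite), which is where class-field-theoretic input enters; and (ii) pinning down the precise integral lattices produced by Shioda--Inose and verifying evenness and positive-definiteness so that each genuinely occurs as a transcendental lattice of a Picard-rank-$20$ surface. The reduction of the N\'eron--Severi part via Witt's theorem, by contrast, is the conceptual step that makes the argument go through without any case analysis, and I would isolate it as a lemma.
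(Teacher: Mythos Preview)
Your strategy coincides with the paper's first (lattice-theoretic) proof: reduce both notions to statements about the rank-$2$ transcendental piece via Witt extension (this is exactly the paper's Lemma~\ref{lem:algiso}), then realise suitable rank-$2$ even positive-definite lattices as transcendental lattices of singular K3 surfaces and separate them by a norm-residue obstruction in an imaginary quadratic field. Your dictionary --- Hodge-algebra-isomorphic $\Leftrightarrow$ same CM field $K$, isogenous $\Leftrightarrow$ same class in $\Q_{>0}^\ast/N_{K/\Q}(K^\ast)$ --- is correct and somewhat more conceptual than the paper's presentation; the paper instead fixes the Fermat quartic with $T\cong\Z(8)^{\oplus 2}$, twists to $T(m)$, embeds via Nikulin's theorem and surjectivity of the period map, and then checks directly that $\Q(8m)\oplus\Q(8m)$ and $\Q(8m')\oplus\Q(8m')$ are non-isometric exactly when $mm'\notin N_{\Q(i)/\Q}(\Q(i)^\ast)$, taking $m_j$ to be the $j$-th prime $\equiv 3\pmod 4$. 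Where you invoke Shioda--Inose to realise arbitrary rank-$2$ lattices, the paper uses Nikulin plus the period map; either works.

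There is, however, a genuine slip in your concrete family. The forms $2x^2+2c_0m^2y^2$ are, for varying $m$, all \emph{isometric} over $\Q$ (substitute $y\mapsto y/m$), hence lie in a single norm-residue class and yield isogenous surfaces --- exactly the opposite of what you want. The fix is to scale the whole form rather than one variable: take $2m\,x^2+2c_0m\,y^2=2m\cdot N_{K/\Q}$ with $m$ running over distinct inert primes in $K=\Q(\sqrt{-c_0})$. Then the discriminant modulo squares stays $c_0$ (so $K$ is fixed and the surfaces are Hodge-algebra isomorphic), while the class $[2m]\in\Q_{>0}^\ast/N_{K/\Q}(K^\ast)$ varies, since distinct inert primes are independent non-norms. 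This is precisely the ``twist $T\mapsto T(m)$'' that the paper performs. With that correction your argument is complete and essentially identical to the paper's.
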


	We will present two proofs of this theorem, one in \S \ref{SS:2} 
	and the
	other in \S \ref{SS:1}. Furthermore, we will show in Theorem
	\ref{thm:IsomChowMot} that the Chow motives of K3 surfaces belonging to the
	family of K3 surfaces of Theorem~\ref{thm:non-isogenousK3} are all isomorphic
	as
	algebra objects.\medskip
	
	Second,	if the Picard number is maximal, a family of K3 surfaces as in Theorem
	\ref{thm:non-isogenousK3} must have non-isometric N\'eron--Severi spaces (Lemma
	\ref{lemma:rho=20}). In this sense, we can improve
	Theorem~\ref{thm:non-isogenousK3} in order to have in addition
	isometric $\Q$-quadratic forms on the N\'eron--Severi spaces\,:
	
	\begin{thm} \label{P:bothisonotisogenous}
		There exists an infinite family $\{S_i\}_{i \in \Z_{>0}}$  of pairwise
		non-isogenous K3 surfaces such
		that, for all $j\neq k \in \Z_{>0}$, we have
		\begin{itemize}
			\item $\HH^*(S_j,\Q)$ and $\HH^*(S_k,\Q)$ are Hodge algebra
			isomorphic.
			\item 	$\HH^2_{\operatorname{tr}}(S_j,\Q)\simeq \HH^2_{\operatorname{tr}}(S_k,\Q)$,
			hence $\operatorname{NS}(S_{j})_{\Q}\simeq \operatorname{NS}(S_{k})_{\Q}$, as
			$\Q$-quadratic spaces.
		\end{itemize} Moreover, such a family can be chosen to consist of K3 surfaces
		with transcendental lattice being any prescribed even lattice with square
		discriminant and of signature $(2, 2), (2, 4), (2, 6)$ or $(2, 8)$.
	\end{thm}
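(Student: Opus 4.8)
The plan is to keep the \emph{integral} transcendental lattice equal to the prescribed lattice $T$ for every member of the family, and to move only the Hodge structure on it by rational similitudes of the quadratic form. Write $(V,q):=T\otimes\Q$, denote by $O(q)\subset GO(q)$ the isometry and similitude groups of $q$, and by $\mu\colon GO(q)\to\Q^\times$ the factor character, with image $G(q)=\{c\in\Q^\times: cq\cong q\}$ the group of similarity factors. A weight-$2$ Hodge structure of K3 type with transcendental lattice $T$ is a period line $\ell\subset V_\C$ on the quadric $\{q=0,\ q(\omega,\overline\omega)>0\}$, and a similitude with \emph{positive} factor preserves this quadric together with its positive cone, so $GO(q)$ acts on the space of such periods. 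Since the transcendental part is a functorial summand of $\HH^2(-,\Q)$, an isogeny restricts to a Hodge isometry of the transcendental parts; thus two periods $\ell,\ell'$ for $T$ give \emph{isogenous} surfaces only if they lie in the same $O(q)$-orbit. The point will be that the larger group $GO(q)$ still produces Hodge algebra isomorphisms.

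First I would fix a \emph{very general} period $\ell$, lying outside the countably many Noether--Lefschetz loci and their translates under the (countably many) similitudes used below, so that $T$ is exactly the transcendental lattice and $\operatorname{End}_{\mathrm{Hodge}}=\Q$. For such $\ell$ the stabilizer $GO(q)_\ell$ consists of similitudes preserving the whole Hodge structure, i.e.\ of $\Q^\times$-scalars, all of \emph{square} factor; hence for $\phi_c\in GO(q)$ with $\mu(\phi_c)=c>0$ the periods $\ell$ and $\phi_c\ell$ lie in the same $O(q)$-orbit if and only if $c\in(\Q^\times)^2$. Choosing $\phi_c$ with factors $c$ ranging over representatives of distinct square classes in $G(q)\cap\Q_{>0}^\times$ and setting $\ell_c:=\phi_c\ell$, the surjectivity of the period map and the global Torelli theorem \cite{PSS,BR} produce K3 surfaces $S_c$ with transcendental lattice $T$ and period $\ell_c$; these are pairwise \emph{non-isogenous}.

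To see that the $S_c$ are pairwise Hodge algebra isomorphic, fix once and for all a primitive embedding $T\hookrightarrow\Lambda_{\mathrm{K3}}=U^{3}\oplus E_8(-1)^2$ (which exists by Nikulin's criteria) and put $N:=T^{\perp}$, so that $\operatorname{NS}(S_c)_\Q=N_\Q$ is the \emph{same} quadratic space for all $c$. The transcendental similitude $\phi_{c'}\phi_c^{-1}$ has factor $c'/c$, and it extends to a Hodge similitude of the whole $\HH^2(-,\Q)$ as soon as $c'/c$ is a similarity factor of $N_\Q$. But $\Lambda_{\mathrm{K3}}$ is unimodular, so $G(q_\Lambda)\supseteq\Q_{>0}^\times$; writing $q_\Lambda=q\perp q_N$ and using $c'/c\in G(q)\cap\Q_{>0}^\times\subseteq G(q)\cap G(q_\Lambda)$, the isometries $(c'/c)q\cong q$ and $(c'/c)q_\Lambda\cong q_\Lambda$ give $q\perp(c'/c)q_N\cong q\perp q_N$, whence $c'/c\in G(q_N)$ by Witt cancellation. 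Exactly as in the proof of Theorem~\ref{thm:non-isogenousK3}, a Hodge similitude of $\HH^2(-,\Q)$ yields a graded Hodge algebra isomorphism $\HH^*(S_c,\Q)\simeq\HH^*(S_{c'},\Q)$. Finally the spaces $\HH^2_{\operatorname{tr}}(S_c,\Q)$ are literally equal to $T_\Q$, and the Néron--Severi spaces are isometric by Witt cancellation inside $\HH^2(-,\Q)\cong\Lambda_{\mathrm{K3}}\otimes\Q$.

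The crux, and the step where the hypotheses on $T$ enter, is the claim that $G(q)\cap\Q_{>0}^\times$ contains infinitely many square classes. I would argue locally--globally: for $c>0$ the real place is automatic, so $cq\cong q$ amounts to $cq\cong q$ over every $\Q_p$. As the rank $n=2+t$ is even, the discriminant condition is automatic, and the Hasse invariants scale by $s_p(cq)/s_p(q)=(c,\operatorname{disc}q)_p\,(c,-1)_p^{\binom{n}{2}}$. The \emph{square discriminant} hypothesis makes $(c,\operatorname{disc}q)_p=1$. When $n\equiv0\pmod4$ (signatures $(2,2),(2,6)$) the exponent $\binom n2$ is even, so $cq\cong q$ for \emph{all} $c>0$; when $n\equiv2\pmod4$ (signatures $(2,4),(2,8)$) it is odd, so $G(q)\cap\Q_{>0}^\times=\{c>0:(c,-1)_p=1\ \forall p<\infty\}$, the positive sums of two squares, which still realizes infinitely many square classes (e.g.\ $2,5,10,13,\dots$). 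I expect this Hasse-symbol bookkeeping, together with the verification that the chosen similitudes preserve the positive cone so that the periods $\ell_c$ are genuinely realizable, to be the only real obstacle; the surjectivity of the period map, the Torelli theorem, and the scalar description of Hodge self-similitudes are standard.
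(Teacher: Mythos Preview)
Your proof is correct and rests on the same ingredients as the paper's: genericity of the period to force $\End_{\mathrm{Hodge}}=\Q$, the Hasse--Witt computation showing that $q$ and $q(m)$ are $\Q$-isometric precisely under the stated parity/discriminant conditions (this is exactly Lemma~\ref{lemma:QuadForm}), Nikulin's embedding theorem plus surjectivity of the period map to realize the Hodge structures by K3 surfaces, and Lemma~\ref{lem:algiso} to pass from a Hodge similitude on $\HH^2$ to a Hodge algebra isomorphism on $\HH^*$.

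The packaging, however, is genuinely different. The paper fixes one period line $\sigma$ and varies the \emph{integral} lattice, producing surfaces $S_m$ with transcendental lattice $T(m)$; non-isogeny is then read off from the fact that the only Hodge morphisms $T_\Q\to T_\Q$ are scalars, so $T(m)_\Q$ and $T(m')_\Q$ are Hodge isometric only when $mm'$ is a square. You instead fix the integral lattice $T$ once and for all and move the period by rational similitudes $\phi_c\in GO(q)$, translating the same scalar-endomorphism argument into the statement that $\ell$ and $\phi_c\ell$ lie in the same $O(q)$-orbit iff $c$ is a square. These two descriptions are dual to one another (a similitude of factor $c$ is an isometry $(V,q)\to(V,q(c^{-1}))$), so the underlying mathematics is the same. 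Your formulation buys two small things: only a single Nikulin embedding $T\hookrightarrow\Lambda$ is needed, and every member of the family has \emph{literally} the prescribed integral transcendental lattice $T$ (hence also the same integral N\'eron--Severi lattice $N=T^\perp$), which matches the ``Moreover'' clause of the theorem more directly than the paper's construction, where the integral transcendental lattices are the twists $T(m)$.
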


	Furthermore, we will show in Proposition \ref{prop:IsoChowRing3} that, assuming
	Conjecture \ref{conj:MGTT}, the K3 surfaces in such a family all have
	isomorphic
	Chow motives and isomorphic Chow rings, but their Chow
	motives are pairwise non-isomorphic as Frobenius algebra objects. This gives
	evidence that one cannot characterize the isogeny class of a complex K3 surface
	with its Chow ring.
	
	\begin{rmk}\label{R:BSV}
		As mentioned to us by Chiara Camere, examples of a pair of
		non-isogenous K3 surfaces with isomorphic rational Hodge structures were
		constructed geometrically, via the so-called \emph{Inose
			isogenies}\footnote{Note that an Inose isogeny is not an isogeny in our
			sense.}
		\cite{Inose}, by Boissier--Sarti--Veniani \cite{BSV}. Precisely, let $f$ be a
		symplectic automorphism of prime order $p$ of a K3 surface $S$ and let $S'$ be
		the minimal resolution of the quotient $S/\langle f \rangle$\,; the surface
		$S'$
		is a K3 surface and, by definition, the rational map $S\dashrightarrow S'$ is
		a
		degree-$p$ Inose isogeny between these two K3 surfaces. On the one hand, we
		have
		isomorphisms of Hodge structures $\HH^{2}_{\operatorname{tr}}(S,
		\Q)=\HH^{2}_{\operatorname{tr}}(S,
		\Q)^{f}\simeq\HH^{2}_{\operatorname{tr}}(S/\langle f \rangle, \Q)\simeq
		\HH^{2}_{\operatorname{tr}}(S', \Q)$. On the other hand, \cite[Theorem 1.1 and
		Corollary 1.2]{BSV} provide many situations where
		$\HH^{2}_{\operatorname{tr}}(S, \Q)$ and $\HH^{2}_{\operatorname{tr}}(S', \Q)$
		are not isometric and consequently $\HH^{2}(S, \Q)$ and $\HH^{2}(S', \Q)$ are
		not Hodge isometric.
		Note that this approach only produces finitely many non-isogenous K3 surfaces
		with isomorphic Hodge structures. The aim of this appendix is to show, via the
		surjectivity of the period map, that in fact one can produce an
		\emph{infinite}
		family of such pairwise non-isogenous K3 surfaces.
	\end{rmk}

	\subsection{Hodge isomorphic \emph{vs.} Hodge algebra isomorphic}
	
	All the examples of non-isogenous K3 surfaces that we will consider will
	consist of K3 surfaces $S$ and $S'$ whose respective transcendental lattices
	$T$
	and $T'$ become Hodge isometric after some twist, \emph{i.e.} such that $T
	\simeq T'(m)$ as Hodge lattices for some integer $m$. The aim of this paragraph
	is to show that the cohomology algebra of any two such K3 surfaces are Hodge
	algebra isomorphic\,; \emph{cf.} Lemma \ref{lem:algiso} below. This will reduce
	the proofs of Theorems~\ref{thm:non-isogenousK3}
	and~\ref{P:bothisonotisogenous}
	to showing that the K3 surfaces in the families have Hodge isomorphic
	transcendental cohomology groups.\medskip
	
	First we state a general lemma based on the classical classification of
	quadratic forms over $\Q$. This lemma will also be used in the proof of
	Theorem~\ref{P:bothisonotisogenous}.

	\begin{lem}\label{lemma:QuadForm}
		Let $Q$ be a non-degenerate $\Q$-quadratic form of even rank.
		\begin{itemize}
			\item  If $\operatorname{disc}(Q) = 1$ and $\rk (Q) \equiv 0 \ \text{mod}\
			4$, or if $\operatorname{disc}(Q) = -1$ and $\rk (Q) \equiv 2 \ \text{mod}\
			4$,
			then for any $m\in \Q_{>0}$, $Q$ and $Q(m)$ are isometric $\Q$-quadratic
			forms.
			\item  If $\operatorname{disc}(Q) = 1$ and $\rk (Q) \equiv 2 \ \text{mod}\
			4$, or if $\operatorname{disc}(Q) = -1$ and $\rk (Q) \equiv 0 \ \text{mod}\
			4$, 
			then for any $m\in N\Q(i)^{\times }$, $Q$ and $Q(m)$ are isometric
			$\Q$-quadratic forms, where $N\Q(i)^{\times }=\{x^{2}+y^{2} \mid (x, y)\neq
			(0,
			0)\in \Q^{2}\}$ is the norm group of the field extension $\Q(i)/\Q$.
		\end{itemize}
		
	\end{lem}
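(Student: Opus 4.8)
The plan is to invoke the Hasse--Minkowski classification of quadratic forms over $\Q$: two non-degenerate forms are isometric over $\Q$ if and only if they are isometric over every completion $\Q_v$, equivalently if and only if they share the same rank, the same discriminant in $\Q^\times/(\Q^\times)^2$, the same signature (at the real place), and the same Hasse invariant $\epsilon_p$ at every finite prime $p$. Applied to $Q$ and its scaling $Q(m)$, three of these four invariants are immediate. The rank is unchanged. The discriminant is preserved because $\rk(Q)=n$ is even: diagonalizing $Q=\langle a_1,\dots,a_n\rangle$, one has $\det Q(m)=m^n\det Q$ with $m^n=(m^{n/2})^2$ a square. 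The signature is preserved because in both bullets $m$ is positive -- in the first case by hypothesis $m\in\Q_{>0}$, and in the second case $m=x^2+y^2>0$ is a sum of two squares. Thus everything reduces to comparing the finite Hasse invariants.

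The technical heart of the argument -- and the step requiring the most care -- is the scaling formula for the Hasse invariant. Writing $\epsilon_v(Q)=\prod_{i<j}(a_i,a_j)_v$ in terms of the Hilbert symbol and expanding $(ma_i,ma_j)_v=(m,m)_v(m,a_j)_v(a_i,m)_v(a_i,a_j)_v$ by bilinearity, I would collect the three extra factors. The symmetric middle factors combine, using $\prod_{i<j}(m,a_i)_v(m,a_j)_v=(m,(\det Q)^{n-1})_v=(m,\det Q)_v$ since $n-1$ is odd; the diagonal factor gives $(m,m)_v^{\binom n2}=(m,-1)_v^{\binom n2}$ via the identity $(x,x)_v=(x,-1)_v$. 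Using the parity relation $\binom n2\equiv n/2\pmod 2$ for even $n$, these collapse to yield $\epsilon_v(Q(m))=\epsilon_v(Q)\cdot(m,\,(-1)^{n/2}\det Q)_v$. Setting $D:=(-1)^{n/2}\det Q$ and noting the further parity identity $(-1)^{n/2}=(-1)^{n(n-1)/2}$ for even $n$, I would observe that $D$ is exactly the \emph{signed} discriminant of $Q$. Hence $Q\cong Q(m)$ precisely when $m>0$ and $(m,D)_p=1$ for all finite primes $p$.

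It remains to match the two bullets with these conditions on $D$. In the first bullet the hypotheses force $D$ to be a square in $\Q^\times$: indeed, when $\rk\equiv0\pmod 4$ one has $(-1)^{n/2}=1$ and $\operatorname{disc}(Q)=1$, while when $\rk\equiv2\pmod 4$ one has $(-1)^{n/2}=-1$ and $\operatorname{disc}(Q)=-1$, so in both sub-cases $D\equiv1$. Then $(m,D)_p=1$ for every $p$ and every $m$, and together with $m>0$ this gives $Q\cong Q(m)$ for all $m\in\Q_{>0}$. In the second bullet the same bookkeeping yields $D\equiv-1\pmod{(\Q^\times)^2}$, so the remaining requirement is $(m,-1)_p=1$ at all finite $p$. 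By the Hasse norm theorem for the quadratic extension $\Q(i)/\Q$ (equivalently, by the classical characterization of rational sums of two squares), an element is a global norm iff it is a local norm at every place; the local condition at $\infty$ is $m>0$ and at finite $p$ is $(m,-1)_p=1$. Thus these conditions hold together exactly when $m\in N\Q(i)^\times$, which is precisely the hypothesis, and the isometry $Q\cong Q(m)$ follows.
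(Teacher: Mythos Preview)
Your proof is correct and follows essentially the same route as the paper's: both reduce to the classification of quadratic forms over $\Q$, observe that rank, signature, and discriminant are automatic, and compute the scaling law $\varepsilon_v(Q(m))=\varepsilon_v(Q)\cdot\big(m,\,\operatorname{disc}(Q)^{r-1}(-1)^{r(r-1)/2}\big)_v$ via bilinearity of the Hilbert symbol and the identity $(m,m)_v=(m,-1)_v$. Your presentation is slightly more explicit in the case analysis and in invoking the Hasse norm theorem (the paper simply states that $(m,-1)_\ell=1$ for all primes $\ell$ when $m\in N\Q(i)^\times$, which only needs the trivial direction), but the substance is identical.
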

	\begin{proof}
		Obviously $Q$ and $Q(m)$ have the same rank and signature. As the rank of $Q$
		is even, their discriminants are also the same (that is, only differ by a
		square). By the classification theory of quadratic forms over $\Q$, we only
		need
		to check that for any prime number $\ell$, their $\varepsilon$-invariants are
		equal at all places $\ell$. Assume that $Q$ is equivalent to the diagonal form
		$\langle a_{1}, \ldots, a_{r}\rangle$ where $r$ is the rank of $Q$ and
		$a_{i}\in
		\Q$\,;  its discriminant is thus given by $\operatorname{disc}(Q) =
		\prod_{i=1}^{r}a_{i}$ and we have
		\begin{eqnarray*}
			\varepsilon_{\ell}(Q(m)) & := &\prod_{i<j}(a_{i}m,
			a_{j}m)_{\ell}=\prod_{i<j}(a_{i}, a_{j})_{\ell}\left(\prod_{i=1}^{r}(a_{i},
			m)_{\ell}\right)^{r-1}(m, m)_{\ell}^{r(r-1)/2}\\
			& = &\varepsilon_{\ell}(Q) \big((\operatorname{disc}(Q)^{r-1}
			(-1)^{r(r-1)/2},m\big)_\ell.
		\end{eqnarray*}
		where $(a, b)_{\ell}$ is the Hilbert symbol of $a, b$ for the local field
		$\Q_{\ell}$ and the last equality uses the identity $(m, m)_{\ell}=(m,
		-1)_{\ell}$. One concludes by using the fact that $(m, -1)_{\ell} = 1$ for all
		prime numbers $\ell$ when $m\in N\Q(i)^{\times }$.
	\end{proof}

	\begin{lem}\label{lem:algiso}
		Let $S$ and $S'$ be two complex K3 surfaces.  The following statements are
		equivalent\,:
		\begin{enumerate}[(i)]
			\item There is a Hodge isometry $\HH^{2}_{\operatorname{tr}}(S', \Q) \simeq
			\HH^{2}_{\operatorname{tr}}(S, \Q)(m)$, for some $m\in \Q_{>0}$\,;
			\item There is a Hodge isometry $\HH^2(S',\Q) \simeq \HH^2(S,\Q)(m)$, for
			some
			$m\in \Q_{>0}$\,;
			\item The cohomology rings $\HH^*(S,\Q)$ and $\HH^*(S',\Q)$ are Hodge algebra
			isomorphic.
		\end{enumerate}
	\end{lem}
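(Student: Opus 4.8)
The plan is to prove the cycle $(iii)\Rightarrow(ii)\Rightarrow(i)\Rightarrow(iii)$, the last implication being the only substantial one. Throughout I exploit the fact that the graded ring $\HH^*(S,\Q)=\HH^0\oplus\HH^2\oplus\HH^4$ of a K3 surface has an essentially trivial multiplicative structure: the unit sits in $\HH^0=\Q$, every product landing in degree $>4$ vanishes for dimension reasons, and the only interesting product is the cup product $\HH^2\otimes\HH^2\to\HH^4\cong\Q$, which under the identification $\HH^4\cong\Q$ (via $\deg$) is exactly the intersection form $q_S$.

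For $(iii)\Rightarrow(ii)$, let $g$ be a graded Hodge algebra isomorphism. Being graded it splits as $g_0\oplus g_2\oplus g_4$; preserving the unit forces $g_0=\id$ on $\HH^0=\Q$, and $g_4$ is multiplication by some $c\in\Q^\times$ on $\HH^4\cong\Q$. Multiplicativity of $g$ on $\HH^2\otimes\HH^2\to\HH^4$ reads $q_{S'}(g_2\alpha,g_2\beta)=c\,q_S(\alpha,\beta)$, so that $g_2$ realizes a Hodge isometry $\HH^2(S,\Q)(c)\xrightarrow{\sim}\HH^2(S',\Q)$. Since both intersection forms have signature $(3,19)$, matching signatures forces $c>0$, which gives $(ii)$ with $m=c$. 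For $(ii)\Rightarrow(i)$, I would simply restrict a Hodge isometry $h\colon\HH^2(S,\Q)\to\HH^2(S',\Q)$ with $q_{S'}(hx,hy)=m\,q_S(x,y)$ to transcendental parts: the transcendental lattice is the sub-Hodge-structure generated by $\HH^{2,0}$ (equivalently the orthogonal complement of the rational $(1,1)$-classes), hence is preserved by any Hodge isomorphism, and the form scales by the same $m$ there, yielding $\HH^2_{\operatorname{tr}}(S,\Q)(m)\simeq\HH^2_{\operatorname{tr}}(S',\Q)$.

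The heart of the argument is $(i)\Rightarrow(iii)$. Starting from a Hodge isometry $\phi\colon\HH^2_{\operatorname{tr}}(S,\Q)(m)\xrightarrow{\sim}\HH^2_{\operatorname{tr}}(S',\Q)$ with $m\in\Q_{>0}$, I want to build a graded Hodge algebra isomorphism. The recipe is to take $g=\id_{\HH^0}\oplus\,g_2\oplus(m\cdot\id_{\HH^4})$, where $g_2$ is a Hodge isomorphism $\HH^2(S,\Q)\to\HH^2(S',\Q)$ scaling the \emph{whole} intersection form by the single factor $m$; granting such a $g_2$, the only nontrivial multiplicativity check $q_{S'}(g_2\alpha,g_2\beta)=m\,q_S(\alpha,\beta)$ holds by design, and each graded piece is manifestly a morphism of Hodge structures. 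Since a Hodge isomorphism preserves both the transcendental part and the span of Hodge classes, one must construct $g_2=\phi\oplus\psi$ with respect to the orthogonal decompositions $\HH^2(S,\Q)=\HH^2_{\operatorname{tr}}(S,\Q)\oplus\operatorname{NS}(S)_\Q$ and likewise for $S'$; here $\psi\colon\operatorname{NS}(S)_\Q(m)\to\operatorname{NS}(S')_\Q$ need only be a $\Q$-linear isometry, as the Néron--Severi parts are pure of type $(1,1)$ and hence carry no extra Hodge datum.

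The main obstacle is therefore the existence of $\psi$: \emph{a priori} the forms $\operatorname{NS}(S)_\Q$ and $\operatorname{NS}(S')_\Q$ need not be isometric (indeed for maximal Picard rank they are not, by Lemma~\ref{lemma:rho=20}), so it is not clear that they become isometric after twisting by $m$. This is resolved by combining two facts. First, every K3 surface has $\HH^2(S,\Q)$ isometric to the fixed rational quadratic form $\Lambda_\Q$ of rank $22$ and discriminant $-1$; since $22\equiv 2\ (\mathrm{mod}\ 4)$, Lemma~\ref{lemma:QuadForm} gives $\Lambda_\Q(m)\simeq\Lambda_\Q$ for \emph{every} $m\in\Q_{>0}$. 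Second, I invoke Witt's cancellation theorem: from $\HH^2(S,\Q)\simeq\HH^2_{\operatorname{tr}}(S,\Q)\perp\operatorname{NS}(S)_\Q$ and the isometries $\HH^2(S,\Q)(m)\simeq\HH^2(S,\Q)$ and $\HH^2_{\operatorname{tr}}(S,\Q)(m)\simeq\HH^2_{\operatorname{tr}}(S',\Q)$, cancelling the common transcendental summand from $\HH^2(S,\Q)\simeq\HH^2(S',\Q)\simeq\HH^2_{\operatorname{tr}}(S',\Q)\perp\operatorname{NS}(S')_\Q$ yields precisely $\operatorname{NS}(S')_\Q\simeq\operatorname{NS}(S)_\Q(m)$. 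This produces $\psi$, completes $g_2$, and hence delivers the desired Hodge algebra isomorphism, closing the cycle.
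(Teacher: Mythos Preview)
Your proof is correct and follows essentially the same approach as the paper: both rely on Lemma~\ref{lemma:QuadForm} to see that the rational K3 lattice is invariant under positive twists, and on Witt's theorem to match up the N\'eron--Severi pieces. The only cosmetic differences are that the paper routes the argument as $(i)\Rightarrow(ii)\Rightarrow(iii)$ and invokes Witt \emph{extension} to produce the full Hodge isometry on $\HH^2$, whereas you go $(i)\Rightarrow(iii)$ directly and use Witt \emph{cancellation} to isolate the isometry $\operatorname{NS}(S)_\Q(m)\simeq\operatorname{NS}(S')_\Q$ before assembling $g_2=\phi\oplus\psi$ block-diagonally; your version has the minor advantage of making explicit why the resulting isometry respects the Hodge structure (the NS part is pure $(1,1)$), a point the paper leaves implicit.
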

	\begin{proof}
		$(i) \Rightarrow (ii)$.
		Clearly any choice of linear isomorphism between the orthogonal complements of
		$\HH^{2}_{\operatorname{tr}}(S, \Q)$ and $\HH^{2}_{\operatorname{tr}}(S', \Q)$
		provides a Hodge isomorphism
		$\HH^{2}(S, \Q) \simeq \HH^{2}(S', \Q)$ that extends the Hodge isomorphism
		between $\HH^{2}_{\operatorname{tr}}(S, \Q)$ and
		$\HH^{2}_{\operatorname{tr}}(S', \Q)$. Since the K3
		lattice $\Lambda:=E_{8}(-1)^{\oplus 2}\oplus	U^{\oplus 3}$  and its twist
		$\Lambda(m)$ are $\Q$-isometric for all positive rational numbers $m$
		(\emph{cf.}
		Lemma~\ref{lemma:QuadForm}), the Hodge isometry
		$\HH^{2}_{\operatorname{tr}}(S', \Q) \simeq \HH^{2}_{\operatorname{tr}}(S,
		\Q))(m)$ extends to a Hodge
		isometry $\HH^2(S',\Q) \simeq \HH^2(S,\Q)(m)$ by Witt's theorem, where the
		twist $(m)$ refers to
		the quadratic form.
		
		$(ii) \Rightarrow (iii)$. Item $(ii)$ means that there is a Hodge class
		$\Gamma
		\in
		\HH^4(S\times S',\Q)$ making the diagram 	
		\begin{equation}\label{diag:degree}
		\xymatrix{\HH^2(S,\Q)\otimes \HH^2(S,\Q) \ar[rr]^{\quad \cup}
			\ar[d]^{(\Gamma\otimes
				\Gamma)_*} && \HH^4(S,\Q) 
			\ar[rr]^{\deg} && \Q \ar[d]^{\cdot m}\\
			\HH^2(S',\Q)\otimes \HH^2(S',\Q) \ar[rr]^{\quad \cup} && \HH^4(S',\Q)
			\ar[rr]^{\deg}
			&&
			\Q
		}
		\end{equation} 
		commute. By
		imposing the $(4,0)$-  and $(0,4)$-K\"unneth components of $\Gamma$ to be
		$[\pt]
		\times [S']$ and $m\, [S]\times [\pt]$, respectively, we obtain that $\Gamma_*
		: 
		\HH^*(S,\Q) \to \HH^*(S',\Q)$ is an isomorphism of algebras.
		
		$(iii) \Rightarrow (ii)$. Let $\Gamma : \HH^*(S,\Q) \to \HH^*(S',\Q)$ be a
		Hodge
		algebra isomorphism. Then its restriction to $\HH^2(S,\Q)$ induces the
		commutative diagram \eqref{diag:degree}, where $m$ is the rational number such
		that $\Gamma_*[\pt] = m\, [\pt]$.
		
		$(ii)\Rightarrow (i)$.  As any Hodge isomorphism must preserve the
		transcendental part, we see that the restriction of a Hodge isometry as in
		$(ii)$ gives a Hodge isometry $\HH^{2}_{\tr}(S, \Q)\to \HH^{2}_{\tr}(S', \Q)$.
	\end{proof}

	\newpage
	\subsection{Proof of Theorems~\ref{thm:non-isogenousK3}
		and~\ref{P:bothisonotisogenous}} \label{SS:1}

	\subsubsection{Hodge algebra isomorphic but non-isometric transcendental
		cohomology}
	This lattice-theoretic approach to show the existence of non-isogenous K3
	surfaces with Hodge isomorphic second cohomology group was communicated to us
	by
	Benjamin Bakker, who
	attributes it to Huybrechts. 
	Let $S$ be a projective K3 surface with Picard
	number $\rho$. Denote its transcendental lattice by
	$T:=\HH^{2}_{\operatorname{tr}}(S, \Z)$\,; it is an even lattice of signature
	$(2, 20-\rho)$. 
	By Nikulin's embedding theorem \cite[Theorem 1.14.4]{Nikulin} (see also
	\cite[Theorem \textbf{14}.1.12, Corollary \textbf{14}.3.5]{HuybrechtsBook}),
	when $12 \leq \rho\leq 20$, for any integer $m>0$, the lattice $T(m)$ admits a
	primitive embedding into the K3 lattice $\Lambda:=E_{8}(-1)^{\oplus 2}\oplus
	U^{\oplus 3}$, unique up to $O(\Lambda)$. Now consider the (new) Hodge
	structure
	on $\Lambda$ given by declaring that the Hodge structure on $T(m)$ is the same
	as the one on $T$ and $T(m)^{\perp}$ is of type $(1, 1)$. By the surjectivity
	of
	the period map, there exists a K3 surface $S_{m}$, such that there is a Hodge
	isometry $T(m)\simeq \HH^{2}_{\operatorname{tr}}(S_{m}, \Z)$. In particular,
	for
	all $m>0$, the Hodge structures $\HH^{2}(S_{m}, \Q)$ are all isomorphic to
	$\HH^{2}(S, \Q)$ and in fact, for all $m>0$, the cohomology algebras
	$\HH^*(S_m,\Q)$ are Hodge algebra isomorphic due to Lemma~\ref{lem:algiso}.
	
	\begin{proof}[Proof of Theorem~\ref{thm:non-isogenousK3}]
		We now take $\rho=20$ and let $S$ be the Fermat quartic surface\,; its
		transcendental lattice $T$ is isomorphic to $\Z(8)\oplus \Z(8)$. In order to
		prove Theorem \ref{thm:non-isogenousK3}, it is enough to construct an infinite
		sequence of positive integers $\{m_{j}\}_{j=1}^{\infty}$, such that the
		lattices
		$T(m_{j})\simeq \Z(8m_{j}) \oplus \Z(8m_{j})$ are pairwise non-isometric over
		$\Q$. However, it is easy to see that for $m, m'\in \Z_{>0}$, the two
		$\Q$-quadratic forms $ \Q(8m) \oplus \Q(8m)$ and  $ \Q(8m') \oplus \Q(8m')$
		are
		isometric if and only if $mm'$ belongs to $\{x^{2}+y^{2}\mid x, y\in \Z\}$,
		which in turn is equivalent to the condition that any prime factor of $mm'$
		congruent to 3 modulo 4 has even exponent, by the theorem of the sum of two
		squares. Therefore a desired sequence is easy to construct, for example, one
		can
		take
		$m_{j}$ to be the $j$-th prime number
		congruent to 3 modulo 4.
	\end{proof}

	\subsubsection{Hodge algebra isomorphic and isometric but non-Hodge isometric
		transcendental cohomology}
	In the previous example, the K3 surfaces $S_{m_{j}}$ all have isomorphic
	$\HH^{2}(-, \Q)$ as rational Hodge structures and their isogeny classes are
	distinguished from each other by the $\Q$-quadratic forms on
	$\HH^{2}_{\operatorname{tr}}(-, \Q)$. We would like to go further and produce
	non-isogenous K3 surfaces with $\HH^{2}_{\operatorname{tr}}(-, \Q)$ both
	isomorphic as Hodge structures and isometric as $\Q$-quadratic forms. Let us
	first remark that no such examples of K3 surfaces exist in the case of maximal
	Picard number $\rho=20$. Indeed, we have the following elementary result\,:
	
	\begin{lem}\label{lemma:rho=20}
		Let $S, S'$ be two K3 surfaces with maximal Picard number $\rho=20$.
		\begin{itemize}
			\item  If $\HH^{2}_{\operatorname{tr}}(S, \Z)$ and
			$\HH^{2}_{\operatorname{tr}}(S', \Z)$ are isometric lattices, then $S$ and
			$S'$
			are isomorphic. 
			\item If $\HH^{2}_{\operatorname{tr}}(S, \Q)$ and
			$\HH^{2}_{\operatorname{tr}}(S', \Q)$ are isometric $\Q$-quadratic forms,
			then
			$S$ and $S'$ are isogenous. 
			\item  If $\HH^{2}_{\operatorname{tr}}(S, \Q)$ and
			$\HH^{2}_{\operatorname{tr}}(S', \Q)$ are Hodge isomorphic, then
			$\HH^*(S,\Q)$ and $\HH^*(S',\Q)$ are Hodge algebra isomorphic.
		\end{itemize} 
	\end{lem}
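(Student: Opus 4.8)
The three statements all rest on the fact that, for a K3 surface of maximal Picard rank $\rho = 20$, the transcendental lattice $T(S) := \HH^2_{\operatorname{tr}}(S,\Z)$ has rank $2$ and signature $(2,0)$, so that $T(S)_\C = T(S)^{2,0} \oplus T(S)^{0,2}$ with each summand a line. In particular the weight-$2$ Hodge structure amounts to a choice of one of the two isotropic lines of $T(S)_\C$, and this line is pinned down by the underlying quadratic form up to complex conjugation. The plan is, in each case, to promote the hypothesis to a suitable (Hodge) isometry or similitude of transcendental lattices and then to extend it across the Néron--Severi part to all of $\HH^2$.

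For the first statement, the hypothesis provides a Hodge isometry $\phi\colon T(S) \to T(S')$ of integral transcendental lattices, and I would extend it to a Hodge isometry of $\HH^2(-,\Z) \cong \Lambda$. Since $\rho = 20$, the lattice $\operatorname{NS}(S)$ is even of signature $(1,19)$ and is the orthogonal complement of $T(S)$ in the unimodular lattice $\Lambda$, so its discriminant form is the opposite of that of $T(S)$. As $\phi$ induces an isometry of discriminant forms, $\operatorname{NS}(S)$ and $\operatorname{NS}(S')$ lie in the same genus\,; being indefinite of rank $20$, which exceeds the length of their discriminant groups by at least two, they are isometric and the natural map to the orthogonal group of the discriminant form is surjective, by Nikulin's theory \cite{Nikulin}. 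This lets me choose an isometry of the Néron--Severi lattices gluing with $\phi$ along the discriminant forms, producing a Hodge isometry $\HH^2(S,\Z) \cong \HH^2(S',\Z)$\,; the global Torelli theorem \cite{PSS} then yields $S \cong S'$. (Equivalently, this is Shioda--Inose's classification of singular K3 surfaces by their oriented transcendental lattices.)

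For the second statement, I would first correct the given $\Q$-isometry of quadratic forms into a Hodge isometry. A rank-$2$ positive definite $\Q$-quadratic form admits orientation-reversing isometries (reflections), whose complexifications interchange the two isotropic lines\,; composing the given isometry with such a reflection if necessary, I obtain a $\Q$-isometry $\phi_\Q\colon T(S)_\Q \to T(S')_\Q$ sending $T(S)^{2,0}$ to $T(S')^{2,0}$, that is, a Hodge isometry. Since $\HH^2(S,\Q) \cong \HH^2(S',\Q)$ as $\Q$-quadratic forms (both being isometric to the rational K3 lattice) and the transcendental parts are now matched, Witt cancellation yields an isometry $\operatorname{NS}(S)_\Q \cong \operatorname{NS}(S')_\Q$\,; as the Néron--Severi spaces are purely of type $(1,1)$, any such isometry is automatically a morphism of Hodge structures. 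The orthogonal direct sum with $\phi_\Q$ then produces a rational Hodge isometry $\HH^2(S,\Q) \cong \HH^2(S',\Q)$, i.e.\ $S$ and $S'$ are isogenous.

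The third statement is where rank-$2$ rigidity enters decisively, and I regard it as the crux. Given a $\Q$-Hodge \emph{isomorphism} $g\colon T(S)_\Q \to T(S')_\Q$ --- not a priori respecting the forms --- I claim it is automatically a similitude. Indeed, on a rank-$2$ space the requirement that a rational quadratic form make a fixed line $T^{2,0}$ (together with its conjugate) isotropic determines that form up to a rational scalar\,; hence the pullback under $g$ of the form on $T(S')_\Q$ and the form on $T(S)_\Q$, both being polarizations compatible with the Hodge structure of $T(S)$, differ by a factor $\lambda \in \Q^\times$, necessarily positive as both are positive definite. Thus $g$ is a Hodge isometry $T(S)_\Q(\lambda) \xrightarrow{\ \sim\ } T(S')_\Q$, which is exactly condition $(i)$ of Lemma~\ref{lem:algiso}\,; that lemma then delivers the Hodge algebra isomorphism $\HH^*(S,\Q) \cong \HH^*(S',\Q)$. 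The main obstacle across the three parts is the first: extending a transcendental Hodge isometry to all of $\HH^2$ over $\Z$ requires the gluing along discriminant forms furnished by Nikulin's uniqueness and surjectivity results (or, equivalently, an appeal to Shioda--Inose), whereas the second and third parts become comparatively soft once the rank-$2$ rigidity of polarizations is isolated.
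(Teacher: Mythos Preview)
Your proof is correct and rests on the same core observation as the paper: in rank~$2$ and signature~$(2,0)$ the complexification has exactly two isotropic lines, interchanged by a reflection, so any isometry of the transcendental quadratic space can be corrected to a Hodge isometry. The paper's proof records only this observation and leaves the passage from a transcendental Hodge isometry to the conclusion (isomorphism in the integral case, isogeny in the rational case) implicit, and it omits the third item entirely as ``left to the reader''. You fill in what the paper suppresses: the Nikulin gluing and Torelli for the first item, Witt cancellation for the second, and for the third item an argument---not in the paper---that any rational Hodge isomorphism of a rank-$2$ transcendental piece is automatically a similitude, feeding into Lemma~\ref{lem:algiso}.

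One presentational slip: in your treatment of the first item you write ``the hypothesis provides a Hodge isometry $\phi$'', but the hypothesis only gives a lattice isometry; the correction by a reflection (which you state in the preamble and carry out explicitly for the second item) should be invoked here as well before you begin the Nikulin extension. This is a wording issue rather than a mathematical gap, since you clearly have the mechanism in hand.
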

	\begin{proof}
		We only prove the first two points\,; the third is left to the reader.
		Let $T$ be the quadratic space underlying their transcendental cohomologies.
		Due
		to the Hodge--Riemann bilinear relations, $T$ is positive definite. Choose an
		orthogonal basis $\{e_{1}, e_{2}\}$ of $T$ and let $d_{i}:= (e_{i}, e_{i})\in
		\Q_{>0}$. One observes that there are only two isotropic directions in
		$T\otimes
		\C$, namely $\sqrt{d_{1}}e_{1}\pm i\sqrt{d_{2}}e_{2}$, hence only two possible
		Hodge structures of K3 type on~$T$. However, these two Hodge structures are
		Hodge isometric via the $\Q$-linear transformation $e_{1}\mapsto e_{1}$;
		$e_{2}\mapsto -e_{2}$.
	\end{proof}
	
	For K3 surfaces with $\rho=12, 14, 16, 18$, there are indeed examples of
	non-isogenous K3 surfaces with $\HH_{\operatorname{tr}}^{2}(-, \Q)$ both
	isometric 
	and isomorphic as rational Hodge structures, as is stated in Theorem
	\ref{P:bothisonotisogenous}.

	\begin{proof}[Proof of Theorem \ref{P:bothisonotisogenous}]
		Given any even lattice $T$ of signature $(2, 2), (2, 4), (2, 6)$ or $(2, 8)$
		whose discriminant is a square, by Lemma \ref{lemma:QuadForm}, the
		$\Q$-quadratic forms $T(m)\otimes \Q$ and $T\otimes \Q$ are isometric for any 
		integer $m\in \Z_{>0}$ which is the sum of two squares. On the other hand, a
		generic choice of an isotropic element $\sigma\in T\otimes \C$ gives rise to
		an
		irreducible $\Q$-Hodge structure on $T$, hence on all its twists $T(m)$, with
		minimal endomorphism algebra\,; i.e. $\End_{HS}(T_{\Q})\simeq \Q$. So for all
		$m\in \Z_{>0}$ which is the sum of two squares, the twists $T(m)\otimes \Q$
		are
		all Hodge isomorphic and isometric. However, since for any such integers $m$
		and
		$m'$, $\Hom_{HS}\left(T(m)\otimes \Q, T(m')\otimes \Q\right)=\Q$, we see that
		$T(m)\otimes \Q$ and $T(m')\otimes \Q$ are Hodge isometric if and only if
		$mm'$
		is a square.
		
		In order to realize the twists $ T(m)$ as transcendental lattices of K3
		surfaces, we use Nikulin's embedding theorem \cite[Theorem 1.14.4]{Nikulin} to
		get a primitive embedding of $T(m)$ into the K3 lattice $\Lambda$.
		For each $m\in \Z_{>0}$, we can therefore construct a Hodge structure on
		$\Lambda$ by declaring that $T(m)$ carries the Hodge structure on $T$ and
		$T(m)^{\perp}$ is of type $(1, 1)$. By the surjectivity of the period map, for
		any $m \in \Z_{>0}$, there exists a K3 surface $S_{m}$ with
		$\HH^{2}_{\operatorname{tr}}(S_{m}, \Q)$ Hodge isometric to $T(m)$.
		
		Now, thanks to Lemma~\ref{lem:algiso}, it remains to construct an infinite
		sequence of positive integers $\{m_{j}\}_{j=1}^{\infty}$ which are sums of two
		squares, such that the product of any two different terms is not a square.
		This
		is easily achieved\,: for example, one can take  $m_{j}$ to be the $j$-th
		prime
		number congruent to 1 modulo~4.
	\end{proof}

	\subsection{Consequences on motives}  \label{SS:motivic}
	
	\subsubsection{The general expectations}
	The following conjecture is a combination of the Hodge conjecture and of the
	conservativity conjecture (which itself is a consequence of the
	Kimura--O'Sullivan finite-dimensionality conjecture or of the Bloch--Beilinson
	conjectures).
	
	\begin{conj}\label{conj:MGTT}
		Two smooth projective varieties $X$ and $Y$ have
		isomorphic Chow motives if and only if their rational cohomologies are
		isomorphic as Hodge structures\,: 
		\begin{equation*}\label{eqn:MotTorelli}
		\h(X)\simeq \h(Y) \ \text{as Chow motives} \  \Longleftrightarrow \HH^{*}(X,
		\Q)\simeq \HH^{*}(Y, \Q) \ \text{as graded Hodge structures}.
		\end{equation*}
	\end{conj}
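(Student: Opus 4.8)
Since this statement is a conjecture rather than a theorem, the aim of a proof proposal is to explain how it would follow from the Hodge conjecture together with the conservativity conjecture, as the surrounding text indicates. The forward implication is unconditional, and I would dispatch it first: the Betti realization is a $\otimes$-functor from rational Chow motives to graded $\Q$-Hodge structures, so any isomorphism $\h(X)\simeq\h(Y)$ of motives immediately realizes to an isomorphism $\HH^*(X,\Q)\simeq\HH^*(Y,\Q)$ of graded Hodge structures.

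The substance lies in the converse. Starting from an isomorphism $f=\bigoplus_i f_i\colon\HH^*(X,\Q)\stackrel{\sim}{\to}\HH^*(Y,\Q)$ of graded Hodge structures, I would first observe that comparing top nonvanishing degrees forces $d:=\dim X=\dim Y$. The plan is then to realize $f$ by an algebraic correspondence and to upgrade the resulting cohomological isomorphism to a motivic one. Via Poincar\'e duality $\HH^i(X,\Q)^\vee\simeq\HH^{2d-i}(X,\Q)(d)$, each component $f_i$ determines a Hodge class in the K\"unneth summand $\HH^{2d-i}(X,\Q)(d)\otimes\HH^i(Y,\Q)$ of $\HH^{2d}(X\times Y,\Q)(d)$; summing over $i$, the class $f$ is itself a Hodge class of type $(d,d)$ on $X\times Y$. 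Invoking the Hodge conjecture, I would then produce a cycle $\Gamma\in\CH^d(X\times Y)=\Hom(\h(X),\h(Y))$ whose cohomology class is $f$, so that $\HH^*(\Gamma)=f$ as a morphism of graded Hodge structures.

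Finally I would appeal to conservativity. The correspondence $\Gamma$ is a morphism of Chow motives whose realization $f$ is, by hypothesis, an isomorphism; the conservativity conjecture---that the realization functor reflects isomorphisms, in the spirit of the unconditional Proposition~\ref{prop:curve}---then forces $\Gamma\colon\h(X)\to\h(Y)$ to be an isomorphism of Chow motives, as desired. The hard part is exactly this last step together with the preceding one, and neither conjecture can be dispensed with: the Hodge conjecture is what converts the transcendental datum $f$ into an algebraic correspondence, while conservativity (a consequence of either the Kimura--O'Sullivan finite-dimensionality conjecture or the Bloch--Beilinson conjectures) is what rules out a cohomologically invertible $\Gamma$ acquiring a nonzero kernel or cokernel at the level of Chow motives.
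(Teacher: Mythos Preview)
Your proposal is correct and follows essentially the same route as the paper's own explanation: the forward direction via Betti realization, and the converse by invoking the Hodge conjecture to algebraize the Hodge isomorphism and then conservativity to lift invertibility from cohomology to Chow motives. The only cosmetic difference is that the paper phrases the Hodge step as producing correspondences for both $f$ and $f^{-1}$ (so that the homological motives are isomorphic), whereas you produce a single $\Gamma$ and apply conservativity directly to it; these amount to the same argument.
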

	
	The implication $\Rightarrow$ holds unconditionally and is simply attained by
	applying the Betti realization functor. Regarding the implication $\Leftarrow$,
	the Hodge conjecture predicts that an isomorphism $\HH^{*}(X, \Q)\simeq
	\HH^{*}(Y, \Q)$ of graded Hodge structures and its inverse are induced by the
	action of a correspondence. Hence the homological motives of $X$ and $Y$ are
	isomorphic. By conservativity, such an isomorphism lifts to rational
	equivalence, \emph{i.e.} lifts to an isomorphism between the Chow motives of
	$X$
	and $Y$.
	\medskip

	Obviously, if $	\h(X)\simeq \h(Y) $ as (Frobenius) algebra objects in the
	category of Chow motives, then by realization $\HH^{*}(X, \Q)$ and $\HH^{*}(Y,
	\Q) $ are Hodge (Frobenius) algebra isomorphic. 
	We would like to discuss to which extent the converse statement could be true.
	In general, this is not the case\,: consider for instance a complex K3 surface
	$S$\,; then the blow-up $S_1$ of $S$ at a point lying on a rational curve and
	the blow-up $S_2$ of $S$ at a very general point have Hodge isomorphic
	cohomology Frobenius algebras, but due to the Beauville--Voisin Theorem
	\ref{thm:bv} we have $$1 = \rk \left(\CH^1(S_1) \otimes \CH^1(S_1) \to
	\CH^2(S_1)\right) \neq \rk \left(\CH^1(S_2) \otimes \CH^1(S_2) \to
	\CH^2(S_2)\right) =2,$$ in particular their Chow motives are not isomorphic as
	algebra objects.\medskip
	
	However, in the case of hyper-K\"ahler varieties, one
	can expect\,:
	
	\begin{conj}\label{conj:MGTT2}
		Two smooth projective hyper-K\"ahler varieties $X$ and $Y$ have
		isomorphic Chow motives as (Frobenius) algebra objects if and only if their
		rational cohomology rings
		are Hodge (Frobenius) algebra	isomorphic \,: 
		\begin{align*}
		& \h(X)\simeq \h(Y) \ \text{as (Frobenius) algebra objects} \\
		&\Longleftrightarrow \HH^{*}(X, \Q)\ \text{and} \  \HH^{*}(Y, \Q) \ \text{are
			Hodge (Frobenius) algebra isomorphic}.
		\end{align*}
	\end{conj}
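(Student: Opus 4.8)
The forward implication requires no hypotheses: the Betti realization is a functor of rigid symmetric monoidal categories, hence carries a (Frobenius) algebra isomorphism $\h(X)\simeq\h(Y)$ to a (Frobenius) algebra isomorphism $\HH^*(X,\Q)\simeq\HH^*(Y,\Q)$, as already noted before the statement. The content is the converse, which I would establish conditionally on the Hodge conjecture, the conservativity conjecture, and the expected existence of a \emph{multiplicative} Chow--K\"unneth decomposition on projective hyper-K\"ahler varieties (in the sense of \S\ref{subsec:mck}).

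So suppose $\gamma\colon \HH^*(X,\Q)\to\HH^*(Y,\Q)$ is a Hodge (Frobenius) algebra isomorphism. Since $\gamma$ is graded, the Hodge conjecture applied to $X\times Y$ and to $Y\times X$ produces an algebraic correspondence $\Gamma\in\CH^*(X\times Y)$ inducing $\gamma$, and one may take $\Gamma$ compatible with the Chow--K\"unneth decompositions, i.e. $\Gamma=\sum_i\Gamma^i$ with $\Gamma^i\colon\h^i(X)\to\h^i(Y)$. Each $\Gamma^i$ realizes to an isomorphism, so by conservativity each $\Gamma^i$, and hence $\Gamma$, is an isomorphism of Chow motives. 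This is exactly the argument sketched after Conjecture~\ref{conj:MGTT}; what remains is to upgrade it to the (Frobenius) algebra level.

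Assume now that the Chow--K\"unneth decompositions of $X$ and $Y$ are multiplicative, so that the small diagonals $\delta_X$ and $\delta_Y$ (the multiplication morphisms $\mu$) are graded. Consider the multiplication defect $D:=\Gamma\circ\delta_X-\delta_Y\circ(\Gamma\otimes\Gamma)\in\Hom(\h(X)\otimes\h(X),\h(Y))$. Because $\gamma$ is an algebra morphism, $D$ is homologically trivial; because $\Gamma$, $\delta_X$ and $\delta_Y$ are all graded, $D$ is a weight-preserving morphism, and therefore lands in the degree-$0$ piece $\CH^*(X\times X\times Y)_{(0)}$ of the relevant Chow group. Granting the injectivity of $\CH^*(-)_{(0)}$ into cohomology --- which is Murre's Conjecture~\ref{conj:murre}\eqref{B} together with~\eqref{D}, and is the hyper-K\"ahler incarnation of the Beauville--Voisin property discussed in \S\ref{subsec:mck} --- a homologically trivial class in $\CH_{(0)}$ must vanish, so $D=0$ and $\Gamma$ is an algebra isomorphism. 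In the Frobenius case one is then done for free: since degree is a cohomological invariant, $\deg\Gamma=\deg\gamma=1$, and by Proposition~\ref{prop:RigidAlgIso} an algebra isomorphism of degree $1$ is automatically an isomorphism of Frobenius algebra objects.

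The crux of the whole argument is this last step, and it is here that the proof must remain conditional. It rests on two structural inputs that are at present conjectural for general hyper-K\"ahler varieties: the existence of a multiplicative Chow--K\"unneth decomposition (known only in special cases, such as Hilbert schemes of K3 surfaces and generalized Kummer varieties), and the injectivity of the degree-$0$ part of the Chow groups into cohomology. In the K3 case both inputs are supplied unconditionally by the Beauville--Voisin decomposition of the small diagonal (Theorem~\ref{thm:bv}), which is precisely what makes the proof of Theorem~\ref{thm:main} in \S\ref{S:mainthm} unconditional; extending that decomposition --- or, at minimum, producing a multiplicative self-dual Chow--K\"unneth decomposition with the requisite injectivity --- to arbitrary projective hyper-K\"ahler varieties is the essential obstacle to removing the conditionality here.
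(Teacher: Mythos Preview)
Your conditional argument is correct, but it is packaged differently from the paper's. The paper does not attempt to reduce the conjecture to Murre's conjectures; instead, immediately after the statement it proves Proposition~\ref{prop:conj}, which derives Conjecture~\ref{conj:MGTT2} from the Hodge conjecture together with the \emph{distinguished marking conjecture} of \cite[Conjecture~2]{fv}. The mechanism there is more direct than yours: a distinguished marking furnishes, for all products $X^n\times Y^m$, a multiplicative \emph{section} of $\CH^*\twoheadrightarrow\overline{\CH}^*$ whose image contains the diagonals. One then simply lifts the Hodge isomorphism (and its inverse) via this section; compatibility with products and with the diagonals forces the lift to be a (Frobenius) algebra isomorphism, with no separate appeal to conservativity.

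By contrast, you assume a multiplicative Chow--K\"unneth decomposition, invoke conservativity to make the graded lift $\Gamma$ invertible, and then kill the multiplication defect $D$ by observing it is a homologically trivial class in $\CH^{2d}(X\times X\times Y)_{(0)}$, which vanishes under Murre~\eqref{B}+\eqref{D}. This is a legitimate alternative route and has the virtue of phrasing everything in terms of the standard motivic conjectures rather than the more bespoke marking conjecture; on the other hand, the paper's route avoids conservativity entirely (the section lifts both $\gamma$ and $\gamma^{-1}$, and their composite is the lift of $[\Delta_X]$, namely $\Delta_X$) and handles the algebra structure in one stroke. The two packages of hypotheses are closely related --- a distinguished marking is essentially a multiplicative Chow--K\"unneth decomposition together with the section property --- so the distinction is one of emphasis rather than strength. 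Your closing paragraph correctly identifies that for K3 surfaces both inputs are supplied by Beauville--Voisin, which is exactly why Corollary~\ref{cor:torelli} is unconditional.
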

	
	We note that Corollary~\ref{cor:torelli} establishes this conjecture in the
	case
	of K3 surfaces and Frobenius algebra structures. In general,
	Conjecture~\ref{conj:MGTT2} is implied by the combination of the Hodge
	conjecture and of the ``distinguished marking conjecture'' for hyper-K\"ahler
	varieties \cite[Conjecture~2]{fv}.
	
	\begin{prop}\label{prop:conj}
		Let $X$ and $Y$ be hyper-K\"ahler varieties of same dimension $d$. Assume\,:
		\begin{itemize}
			\item The Hodge conjecture in codimension $d$ for $X\times Y$\,;
			\item  $X$ and $Y$ satisfy the ``distinguished marking conjecture''
			\cite[Conjecture~2]{fv}.
		\end{itemize}
		Then Conjecture~\ref{conj:MGTT2} holds for $X$ and $Y$.
	\end{prop}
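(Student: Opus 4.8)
The forward implication of Conjecture~\ref{conj:MGTT2} holds unconditionally: applying the Betti realization functor to an isomorphism of (Frobenius) algebra objects $\h(X)\simeq\h(Y)$ produces a Hodge (Frobenius) algebra isomorphism on rational cohomology, as observed in the discussion preceding Conjecture~\ref{conj:MGTT2}. We therefore concentrate on the converse. By the definition of algebra morphisms together with Lemma~\ref{lem:formal} and Proposition~\ref{prop:RigidAlgIso}, it suffices to produce a correspondence $\Gamma\in\CH^{d}(X\times Y)$ that is invertible as a morphism $\h(X)\to\h(Y)$ and satisfies the multiplicativity relation $\Gamma\circ\delta_{X}=\delta_{Y}\circ(\Gamma\otimes\Gamma)$; in the Frobenius case we must moreover arrange the orthogonality relation $\Gamma^{-1}={}^{t}\Gamma$.

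So let $g\colon\HH^{*}(X,\Q)\xrightarrow{\sim}\HH^{*}(Y,\Q)$ be a Hodge (Frobenius) algebra isomorphism. Through the K\"unneth formula and Poincar\'e duality, $g$ and $g^{-1}$ are Hodge classes of type $(d,d)$ in $\HH^{2d}(X\times Y,\Q)$ and in $\HH^{2d}(Y\times X,\Q)\simeq\HH^{2d}(X\times Y,\Q)$. By the Hodge conjecture in codimension $d$ for $X\times Y$ they are algebraic: there exist $\Gamma\in\CH^{d}(X\times Y)$ and $\Gamma'\in\CH^{d}(Y\times X)$ whose cohomology classes are $g$ and $g^{-1}$. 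Since $g$ is an algebra isomorphism, the compatibility of $g$ with the cup products, read off the cohomological form of diagram~\eqref{eq:diagmult}, yields the identities $\Gamma'\circ\Gamma\equiv\Delta_{X}$, $\Gamma\circ\Gamma'\equiv\Delta_{Y}$ and $\Gamma\circ\delta_{X}\equiv\delta_{Y}\circ(\Gamma\otimes\Gamma)$, where $\equiv$ denotes equality modulo homological equivalence; in the Frobenius case, preservation of the Poincar\'e pairing forces $\Gamma'\equiv{}^{t}\Gamma$ as well.

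It remains to promote these homological identities to genuine equalities of cycles, and this is exactly the role of the distinguished marking conjecture \cite[Conjecture~2]{fv}. It endows $X$, $Y$ and their self-products with a distinguished subalgebra $\DCH^{*}(-)\subseteq\CH^{*}(-)$ ---the grade-$0$ part $\CH^{*}(-)_{(0)}$ for a canonical multiplicative Chow--K\"unneth decomposition, in the sense of \S\ref{subsec:mck}--- which injects into cohomology with image the Hodge classes, which contains the structural cycles $\Delta$ and $\delta$, and which is stable under tensor product, composition of correspondences, and pushforward and pullback along projections. Since $\Gamma$ lifts a Hodge class, we may replace it by its grade-$0$ component without altering its cohomology class, and thereby assume $\Gamma,\Gamma'\in\DCH$; consequently all cycles occurring in the identities of the previous paragraph are distinguished. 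Being distinguished and homologically trivial, their differences vanish, so the four relations hold in the respective Chow groups. The equalities $\Gamma'\circ\Gamma=\Delta_{X}$ and $\Gamma\circ\Gamma'=\Delta_{Y}$ make $\Gamma$ invertible, the relation $\Gamma\circ\delta_{X}=\delta_{Y}\circ(\Gamma\otimes\Gamma)$ makes it an algebra isomorphism, and in the Frobenius case $\Gamma'={}^{t}\Gamma$ supplies orthogonality, so that by Proposition~\ref{prop:RigidAlgIso} the morphism $\Gamma$ is an isomorphism of Frobenius algebra objects. The main obstacle is precisely this last lifting step: one must know that the given Hodge isomorphism is realized by a \emph{distinguished} correspondence and that the distinguished subalgebras attached to $X$ and $Y$ are compatible enough to be compared across the two varieties, which is exactly the substance packaged by the distinguished marking conjecture; the Hodge conjecture by itself supplies an algebraic representative only up to homological equivalence.
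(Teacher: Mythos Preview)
Your proof is correct and follows essentially the same approach as the paper: use the Hodge conjecture to realize the given Hodge (Frobenius) algebra isomorphism by an algebraic correspondence, then invoke the distinguished marking conjecture of \cite{fv} to choose this correspondence in the distinguished subring, whence the required diagonal and small-diagonal identities, which hold homologically, are forced to hold rationally. The paper is slightly more explicit in citing \cite[\S 3]{fv} for the existence of compatible sections on \emph{mixed} products $X^n\times Y^m$ (not just self-products), and in observing that numerical and homological equivalence agree via the abelian marking, but your argument packages the same content.
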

	\begin{proof}
		By \cite[\S 3]{fv}, the distinguished marking conjecture for $X$ and $Y$ 
		provides for all non-negative integers $n$ and $m$ a section to the graded
		algebra epimorphism $\CH^*(X^n\times Y^m) \to \overline{\CH}^*(X^n\times Y^m)$
		in such a way that these are compatible with push-forwards and pull-backs along
		projections. In addition, the images of the sections corresponding to
		$\CH^*(X^2) \to \overline{\CH}^*(X^2)$ and $\CH^*(Y^2) \to
		\overline{\CH}^*(Y^2)$ contain the diagonals $\Delta_X$ and $\Delta_Y$,
		respectively. Here, $\overline{\CH}^*(-)$ denotes the Chow ring modulo numerical
		equivalence. In fact, since numerical and homological equivalence agree for
		abelian varieties, the same holds for $X$ and $Y$ (\emph{via} their markings). 
		
		As before, the Hodge conjecture predicts that a Hodge isomorphism $\HH^{*}(X,
		\Q)\simeq \HH^{*}(Y, \Q)$ and its inverse are induced by the action of an
		algebraic correspondence. We fix the isomorphism $\phi: \h(X)
		\stackrel{\sim}{\longrightarrow} \h(Y)$ to be the correspondence that is the
		image of the Hodge class under the section to $\CH^*(X\times Y) \to
		\overline{\CH}^*(X\times Y)$ inducing the Hodge isomorphism of (Frobenius)
		algebras $\HH^{*}(X, \Q) \stackrel{\sim}{\longrightarrow} \HH^{*}(Y, \Q)$. Since
		the (Frobenius) algebra structure on the motives of varieties is simply
		described in terms of the rational equivalence class of the diagonal and of the
		small diagonal, the isomorphism $\phi$ provides, thanks to the compatibilities
		of the sections on the product of various powers of $X$ and $Y$, a morphism
		compatible with the (Frobenius) algebra structures.
	\end{proof}

	Although we do not know how to establish Conjecture~\ref{conj:MGTT2} in general
	for K3 surfaces and algebra structures, we can still say something for K3
	surfaces with a \emph{Shioda--Inose structure}.
	Recall that a Shioda--Inose structure on a K3
	surface $S$ consists of a Nikulin
	involution (that is, a symplectic involution) with rational quotient map $\pi :
	S\dashrightarrow Y$ such that $Y$ is a Kummer surface and $\pi_*$ induces a
	Hodge isometry $T_S(2) \simeq T_Y$, where $T_S$ and $T_Y$ denote the
	transcendental lattices of $S$ and $Y$. If $S$ admits a Shioda--Inose
	structure,
	let $f: A\to Y$ be the quotient morphism from the complex abelian surface whose
	Kummer surface is $Y$. By \cite[\S
	6]{morrison},  there is a Hodge isometry of transcendental lattices $T_S \simeq
	T_A$, and $f^*\pi_*$ induces an isomorphism $\HH_{\mathrm{tr}}^2(S,\Q)
	\stackrel{\sim}{\longrightarrow} \HH^2_{\mathrm{tr}}(A,\Q)$ with inverse
	$\frac{1}{2} f_*\pi^*$.
	\medskip

	\begin{prop}
		\label{prop:motivicglobalTorelli} 
		Let $S$ and $S'$ be two K3 surfaces with a Shioda--Inose structure
		(\emph{e.g.}
		with
		Picard rank $\geq 19$, 	\cite[Corollary~6.4]{morrison}). The following
		conditions are equivalent. 
		\begin{enumerate}[(i)]
			\item  $\HH^*(S,\Q)$ and $\HH^*(S',\Q)$  are Hodge algebra isomorphic.
			\item$\h(S) \simeq \h(S')$ as algebra objects in the category of rational
			Chow
			motives.
		\end{enumerate}
	\end{prop}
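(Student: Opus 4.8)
The implication $(ii)\Rightarrow(i)$ is immediate: applying the Betti realization functor, which is a tensor functor, turns an algebra isomorphism $\h(S)\simeq\h(S')$ into a graded $\Q$-algebra isomorphism $\HH^*(S,\Q)\simeq\HH^*(S',\Q)$ respecting cup-products, that is, a Hodge algebra isomorphism. The substance is the converse $(i)\Rightarrow(ii)$, and the plan is to mimic the two-step construction $\Gamma=\Gamma_{\mathrm{alg}}+\Gamma_{\mathrm{tr}}$ of the proof of Theorem~\ref{thm:main}, but now aiming only for an \emph{algebra} isomorphism (not a Frobenius one), so that we are permitted to realise a Hodge \emph{similitude} rather than an isometry.

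First I would unwind $(i)$ via Lemma~\ref{lem:algiso}: a Hodge algebra isomorphism is equivalent to a Hodge isometry $\HH^2(S',\Q)\simeq\HH^2(S,\Q)(m)$ for some $m\in\Q_{>0}$, which respects the orthogonal splitting into transcendental and Néron--Severi parts. In particular $S$ and $S'$ have equal Picard number, and the restriction to the algebraic part is a factor-$m$ similitude $\mathrm{NS}(S)_\Q\to\mathrm{NS}(S')_\Q$. Since over $\C$ the summand $\h_{\mathrm{alg}}(S)=\h^0(S)\oplus\h^2_{\mathrm{alg}}(S)\oplus\h^4(S)$ is of Tate type (cf.\ \S\ref{subsec:kmp}), this similitude is induced by an honest algebraic correspondence $\Gamma_{\mathrm{alg}}\colon\h_{\mathrm{alg}}(S)\xrightarrow{\sim}\h_{\mathrm{alg}}(S')$ with $(\Gamma_{\mathrm{alg}})_*o_S=m\,o_{S'}$ whose action on $\mathrm{NS}$ scales the intersection form by $m$ by construction.

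The heart of the proof -- and the only place the Shioda--Inose hypothesis enters -- is the construction of a \emph{Chow-motive} isomorphism $\Gamma_{\mathrm{tr}}\colon\h^2_{\mathrm{tr}}(S)\xrightarrow{\sim}\h^2_{\mathrm{tr}}(S')$ realising on cohomology the transcendental part of the above similitude. The plan is to transport everything to abelian surfaces. By \cite{morrison} the algebraic correspondences $f^*\pi_*$ and $\tfrac12 f_*\pi^*$ give mutually inverse isomorphisms $\HH^2_{\mathrm{tr}}(S,\Q)\simeq\HH^2_{\mathrm{tr}}(A,\Q)$ for the abelian surface $A$, and likewise for $S'$ and some $A'$. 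These are a priori only cohomological; to promote them to isomorphisms of Chow motives I would invoke the fact that the motive of a K3 surface carrying a Shioda--Inose structure is of abelian type, hence Kimura finite-dimensional, so that the realization functor is conservative on $\h^2_{\mathrm{tr}}(S)$ and a self-correspondence acting as the identity on cohomology is the identity modulo nilpotents, hence invertible. This yields Chow isomorphisms $\h^2_{\mathrm{tr}}(S)\simeq\h^2_{\mathrm{tr}}(A)$ and $\h^2_{\mathrm{tr}}(S')\simeq\h^2_{\mathrm{tr}}(A')$ and transfers the Hodge similitude to a transcendental Hodge isomorphism $\HH^2_{\mathrm{tr}}(A,\Q)\simeq\HH^2_{\mathrm{tr}}(A',\Q)$. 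The key point is then that, for abelian surfaces, such a transcendental Hodge isomorphism forces $A$ and $A'$ to be isogenous (the weight-$2$ transcendental Hodge structure, sitting inside $\wedge^2\HH^1$, recovers $\HH^1$ up to isogeny) and is induced by an element of $\mathrm{Hom}(A,A')$, which is automatically algebraic. Crucially, an isogeny of degree $n$ scales the intersection form on $\wedge^2\HH^1$ by $n$ (projection formula); so, unlike Buskin's theorem, which produces only \emph{isometries} and would fail precisely when $m$ is not a square, the isogeny-induced correspondences span all transcendental Hodge morphisms, and in particular the given factor-$m$ similitude is algebraic and provides a genuine Chow isomorphism $\h^2_{\mathrm{tr}}(A)\simeq\h^2_{\mathrm{tr}}(A')$. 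Composing with the Shioda--Inose isomorphisms produces $\Gamma_{\mathrm{tr}}$ with transcendental similitude factor exactly $m$. This lifting step is the main obstacle; everything else is formal.

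Finally I would set $\Gamma=\Gamma_{\mathrm{alg}}+\Gamma_{\mathrm{tr}}$ and verify it is an algebra homomorphism using the Beauville--Voisin decomposition (Theorem~\ref{thm:bv}). Since divisor classes are orthogonal to transcendental classes, the only nonzero products are $\h_{\mathrm{alg}}\cdot\h_{\mathrm{alg}}$, the unit action $\h^0\cdot\h^2_{\mathrm{tr}}$, and $\h^2_{\mathrm{tr}}\cdot\h^2_{\mathrm{tr}}\to\h^4$; multiplicativity therefore reduces to the two conditions that $\Gamma_{\mathrm{alg}}$ scale the $\mathrm{NS}$-form by $m$ and that $\Gamma_{\mathrm{tr}}$ scale the transcendental form by $m$, with the common factor $m$ matching $\Gamma_*o_S=m\,o_{S'}$; both hold by construction. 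As $\Gamma$ is visibly invertible, it is an algebra isomorphism, giving $(ii)$. Observe that imposing $m=1$ would force the orthogonality condition of Proposition~\ref{prop:RigidAlgIso}, i.e.\ the Frobenius structure; it is exactly the freedom in the twist $m$ that makes the \emph{algebra} statement strictly weaker than isogeny, as the surrounding discussion requires.
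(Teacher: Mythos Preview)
Your approach differs from the paper's, and there is a genuine gap in the last step.

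The paper does not build $\Gamma$ by hand. Instead it invokes Proposition~\ref{prop:conj}: once one knows (a) the Hodge conjecture for $S\times S'$ and (b) that $S$ and $S'$ satisfy the \emph{distinguished marking conjecture} of \cite[Conjecture~2]{fv}, the conclusion follows formally. Point~(a) is reduced, via the Shioda--Inose correspondences, to the Hodge conjecture for the product of two abelian surfaces, which is a theorem of Ram\'on~Mar\'i~\cite{rm}. Point~(b) is \cite[Proposition~5.12]{fv}. The role of the distinguished marking is crucial: it furnishes, for all $n,m$, multiplicatively compatible \emph{sections} of $\CH^*(S^n\times S'^m)\twoheadrightarrow\overline{\CH}^*(S^n\times S'^m)$ containing the diagonals, so that lifting the given Hodge algebra isomorphism through these sections automatically yields an \emph{algebra} isomorphism of Chow motives.

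Your argument misses exactly this point. The condition ``$\Gamma$ is an algebra morphism'' unwinds, via the Beauville--Voisin decomposition, to the \emph{Chow-level} identity ${}^{t}\Gamma_{\mathrm{tr}}\circ\Gamma_{\mathrm{tr}}=m\cdot\mathrm{id}$ in $\End(\h^2_{\mathrm{tr}}(S))$, not merely its cohomological shadow. Your construction of $\Gamma_{\mathrm{tr}}$ via Kimura finite-dimensionality only guarantees this relation \emph{modulo the nilpotent ideal} of homologically trivial endomorphisms; there is no reason for that ideal to vanish here. You try to circumvent this by asserting that the transcendental Hodge similitude is induced by an element of $\Hom(A,A')$ (so that the projection formula would give the relation on the nose), but this assertion is unjustified and in general false: algebraicity of such Hodge classes on $A\times A'$ is precisely the content of Ram\'on~Mar\'i's theorem, and even then the resulting cycles need not be graphs of isogenies. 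The paper's use of the section machinery from~\cite{fv} is what replaces this missing control, lifting the numerical relation ${}^{t}\gamma\circ\gamma=m\cdot\mathrm{id}$ to rational equivalence by functoriality of the section under composition.
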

	
	\begin{proof}
		Let $S$ and $S'$ be two K3 surfaces with a Shioda--Inose structure. In a
		similar vein to Proposition~\ref{prop:conj},
		the proposition is a combination of the validity
		of the Hodge conjecture for $S\times S'$, together with the fact
		\cite[Proposition~5.12]{fv} that $S$ and $S'$
		satisfy the distinguished marking conjecture of \cite[Conjecture~2]{fv}. 
		The fact that the Hodge conjecture holds for $S\times S'$ reduces, \emph{via}
		the correspondence-induced isomorphism $\HH_{\mathrm{tr}}^2(S,\Q)
		\stackrel{\sim}{\longrightarrow} \HH^2_{\mathrm{tr}}(A,\Q)$ described above,
		to
		the fact that the Hodge conjecture holds for the product of any two
		abelian surfaces. The latter is proven in \cite{rm}.
	\end{proof}

	\subsubsection{Non-isogenous K3 surfaces with Chow motives isomorphic as
		algebra
		objects}
	By combining Theorem~\ref{thm:non-isogenousK3}  with the fact
	\cite[Corollary~6.4]{morrison} that K3 surfaces of maximal Picard rank admit a
	Shioda--Inose structure, we can establish\,:
	
	\begin{thm}\label{thm:IsomChowMot}
		There exists an infinite family of K3 surfaces such that
		\begin{itemize}
			\item they are pairwise non-isogenous\,;
			\item their Chow motives are pairwise non-isomorphic as Frobenius algebra
			objects\,;
			\item their Chow motives are all isomorphic as algebra objects.
		\end{itemize}
	\end{thm}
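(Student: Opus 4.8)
The plan is to assemble the theorem from three ingredients already at our disposal, with essentially no new computation. First I would take the infinite family $\{S_i\}_{i\in\Z_{>0}}$ furnished by Theorem~\ref{thm:non-isogenousK3}, chosen in its strengthened form so that all the $S_i$ have maximal Picard rank $20$. By the very construction of that family, the members are pairwise non-isogenous and pairwise Hodge algebra isomorphic; thus the first bullet is immediate, and only the second and third bullets need argument.

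For the third bullet I would observe that, having maximal Picard rank, each $S_i$ admits a Shioda--Inose structure by \cite[Corollary~6.4]{morrison}, so any two members $S_j,S_k$ fall within the scope of Proposition~\ref{prop:motivicglobalTorelli}. Since $\HH^*(S_j,\Q)$ and $\HH^*(S_k,\Q)$ are Hodge algebra isomorphic, condition $(i)$ of that proposition holds, and the equivalence $(i)\Leftrightarrow(ii)$ there yields an isomorphism $\h(S_j)\simeq\h(S_k)$ of \emph{algebra} objects in the category of rational Chow motives. This establishes the third bullet.

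For the second bullet I would argue by contradiction using the motivic Torelli statement. Suppose that for some $j\neq k$ there existed an isomorphism $\h(S_j)\simeq\h(S_k)$ of Frobenius algebra objects. By the implication $(iii)\Rightarrow(i)$ of Corollary~\ref{cor:torelli}, the surfaces $S_j$ and $S_k$ would then be isogenous, contradicting the pairwise non-isogeny built into the family. Hence no Frobenius-algebra isomorphism can exist, and the Chow motives are pairwise non-isomorphic as Frobenius algebra objects.

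The entire difficulty is therefore pushed into the two cited inputs, which is precisely why this final statement reads as a formal consequence. The genuine content of Theorem~\ref{thm:non-isogenousK3} is the lattice bookkeeping (via surjectivity of the period map) that keeps the family pairwise non-isogenous while preserving the Hodge algebra type, and the genuine content of Proposition~\ref{prop:motivicglobalTorelli} is the distinguished-marking and Hodge-conjecture machinery that lifts a Hodge algebra isomorphism to an honest isomorphism of Chow motives as algebra objects. The one point I would double-check in the assembly is that Proposition~\ref{prop:motivicglobalTorelli} delivers only an \emph{algebra} isomorphism and makes no claim about orthogonality; this is exactly what renders the second and third bullets compatible rather than contradictory, the gap between them being measured precisely by the Frobenius structure as in Proposition~\ref{prop:RigidAlgIso}.
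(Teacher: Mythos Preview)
Your proposal is correct and matches the paper's own proof essentially line for line: the paper also takes the rank-$20$ family from Theorem~\ref{thm:non-isogenousK3}, invokes Corollary~\ref{cor:torelli} for the non-isomorphism as Frobenius algebra objects, and applies Proposition~\ref{prop:motivicglobalTorelli} (via the Shioda--Inose structure) for the algebra-object isomorphism. Your additional remarks about what content is being pushed into the cited inputs are accurate and add nothing the paper would object to.
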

	\begin{proof}
		Let $\{S_i\}_{i \in \Z_{>0}}$  be a family of pairwise non-isogenous K3
		surfaces of
		maximal Picard rank such that $\HH^*(S_j,\Q)$ and $\HH^*(S_k,\Q)$ are
		Hodge algebra isomorphic for all $j,k \in \Z$. Such a family of K3
		surfaces
		exist thanks to Theorem~\ref{thm:non-isogenousK3}.
		By Corollary \ref{cor:torelli}  the Chow motives of these K3 surfaces are
		pairwise non-isomorphic as Frobenius algebra objects. The fact that
		the Chow motives
		of any two surfaces  in the family are isomorphic as algebra objects is
		Proposition~\ref{prop:motivicglobalTorelli}.
	\end{proof}

	Finally, the following proposition gives evidence that the notion of
	``isogeny'' for K3 surfaces is strictly more restrictive than the notion of
	``isomorphic Chow
	rings'' (see Remark~\ref{rmk:ChowRingIso})\,:

	\begin{prop} \label{prop:IsoChowRing3}
		Assume that Conjecture~\ref{conj:MGTT} holds for K3 surfaces.
		Then there exists an infinite family $\{S_i\}_{i \in \Z_{>0}}$  of pairwise
		non-isogenous K3 surfaces with the property
		that, for all $j,k \in \Z_{>0}$,  there exists an isomorphism $\h(S_j)
		\stackrel{\sim}{\longrightarrow}\h(S_k)$ of Chow motives inducing a ring
		isomorphism $\CH^*(S_j) \stackrel{\sim}{\longrightarrow} \CH^*(S_k)$ such that
		the distinguished class $o_{S_j}$ is mapped to the distinguished class
		$o_{S_k}$.
		
		Moreover, such a family can be chosen to consist of K3 surfaces
		with transcendental lattice being any prescribed even lattice with square
		discriminant and of signature $(2, 2), (2, 4), (2, 6)$ or $(2, 8)$.
	\end{prop}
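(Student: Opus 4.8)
The plan is to take the family furnished by Theorem~\ref{P:bothisonotisogenous} and, for any two members $S=S_j$ and $S'=S_k$, to build an isomorphism of Chow motives $\Gamma\colon\h(S)\to\h(S')$ whose action on Chow groups is a ring isomorphism sending $o_S$ to $o_{S'}$. The guiding observation is that, by the Beauville--Voisin Theorem~\ref{thm:bv}, the multiplicative structure of $\CH^*(S)$ for a K3 surface is entirely encoded by the intersection form on divisors\,: the only non-trivial products are $\alpha\cdot\beta=\deg(\alpha\cdot\beta)\,o_S$ for $\alpha,\beta\in\CH^1(S)$, every other product landing in $\CH^3(S)=\CH^4(S)=0$. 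Hence a motive isomorphism with $\Gamma_*o_S=o_{S'}$ will induce a \emph{ring} isomorphism precisely when its restriction $\Gamma_*\colon\CH^1(S)=\mathrm{NS}(S)_\Q\to\mathrm{NS}(S')_\Q$ is an \emph{isometry}. This is exactly the additional property guaranteed by Theorem~\ref{P:bothisonotisogenous}---and deliberately \emph{not} by Theorem~\ref{thm:non-isogenousK3}, where the N\'eron--Severi forms are non-isometric---which is why the argument is based on that refined family.

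Concretely, I would follow the construction in the proof of Theorem~\ref{thm:main} and set $\Gamma=\Gamma_{\mathrm{alg}}+\Gamma_{\mathrm{tr}}$ relative to the refined Chow--K\"unneth decomposition $\h(S)=\h^0(S)\oplus\h^2_{\mathrm{alg}}(S)\oplus\h^2_{\mathrm{tr}}(S)\oplus\h^4(S)$ of \S\ref{subsec:kmp}. For the algebraic part, Theorem~\ref{P:bothisonotisogenous} provides an isometry $\mathrm{NS}(S)_\Q\simeq\mathrm{NS}(S')_\Q$\,; picking an orthogonal basis $E_1,\dots,E_\rho$ of $\mathrm{NS}(S)_\Q$ and letting $F_i$ denote its image, the correspondence $M=\sum_i\frac{1}{\deg(E_i\cdot E_i)}\,E_i\times F_i$ induces an orthogonal isomorphism $\h^2_{\mathrm{alg}}(S)\stackrel{\sim}{\longrightarrow}\h^2_{\mathrm{alg}}(S')$, and $\Gamma_{\mathrm{alg}}:=o_S\times S'+M+S\times o_{S'}$ is an isomorphism $\h^0(S)\oplus\h^2_{\mathrm{alg}}(S)\oplus\h^4(S)\stackrel{\sim}{\longrightarrow}\h^0(S')\oplus\h^2_{\mathrm{alg}}(S')\oplus\h^4(S')$ with $(\Gamma_{\mathrm{alg}})_*o_S=o_{S'}$, exactly as in \S\ref{S:mainthm}\,; this step is unconditional. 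For the transcendental part, the first bullet of Theorem~\ref{P:bothisonotisogenous} together with Lemma~\ref{lem:algiso} yields a Hodge isomorphism of rational Hodge structures $\HH^2_{\mathrm{tr}}(S,\Q)\simeq\HH^2_{\mathrm{tr}}(S',\Q)$. Here I would invoke Conjecture~\ref{conj:MGTT} for K3 surfaces in the form out of which it is built\,: the Hodge conjecture for the fourfold $S\times S'$ makes this isomorphism and its inverse algebraic, and conservativity lifts the resulting isomorphism of homological transcendental motives to an isomorphism of Chow motives $\Gamma_{\mathrm{tr}}\colon\h^2_{\mathrm{tr}}(S)\stackrel{\sim}{\longrightarrow}\h^2_{\mathrm{tr}}(S')$.

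Being compatible with the (orthogonal) decomposition, $\Gamma=\Gamma_{\mathrm{alg}}+\Gamma_{\mathrm{tr}}$ is an isomorphism of Chow motives, and $\Gamma_*o_S=o_{S'}$ because $o_S\in\CH^2(\h^4(S))$, where only $\Gamma_{\mathrm{alg}}$ (via its summand $S\times o_{S'}$) acts. To see that $\Gamma_*$ is a ring homomorphism, note that it sends the unit $[S]$ to $[S']$, acts on $\CH^1(S)$ as the isometry $M_*$, and for $\alpha,\beta\in\CH^1(S)$ gives $\Gamma_*(\alpha\cdot\beta)=\deg(\alpha\cdot\beta)\,o_{S'}=\deg(M_*\alpha\cdot M_*\beta)\,o_{S'}=\Gamma_*\alpha\cdot\Gamma_*\beta$ by the isometry property, all remaining products vanishing on both sides by Theorem~\ref{thm:bv}. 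Note that $\Gamma$ is \emph{not} an isomorphism of Frobenius algebra objects, consistently with the non-isogeny of $S$ and $S'$ and Corollary~\ref{cor:torelli}. The ``Moreover'' clause is then inherited directly from the corresponding statement of Theorem~\ref{P:bothisonotisogenous}. I expect the only genuine difficulty---and the sole use of the hypothesis---to be the construction of $\Gamma_{\mathrm{tr}}$\,: lifting the transcendental Hodge isomorphism to an isomorphism of \emph{Chow} motives is precisely what forces the conditional dependence on Conjecture~\ref{conj:MGTT}, whereas the algebraic part and the ring-theoretic verification are formal consequences of the Beauville--Voisin relations and of the isometry of the N\'eron--Severi forms.
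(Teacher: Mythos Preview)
Your approach is essentially the paper's: take the family of Theorem~\ref{P:bothisonotisogenous}, assemble $\Gamma=\Gamma_{\mathrm{alg}}+\Gamma_{\mathrm{tr}}$ along the refined Chow--K\"unneth decomposition, use the isometry of N\'eron--Severi spaces for the algebraic part, and verify the ring property via the Beauville--Voisin relation. The ``Moreover'' clause is inherited exactly as you say.

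There is one point to tighten. The hypothesis is that Conjecture~\ref{conj:MGTT} holds \emph{for K3 surfaces}, i.e.\ that $\h(S)\simeq\h(S')$ whenever $\HH^*(S,\Q)\simeq\HH^*(S',\Q)$ as Hodge structures. It does \emph{not} hand you the Hodge conjecture for $S\times S'$ and conservativity separately, and those stronger ingredients are what you actually invoke to construct $\Gamma_{\mathrm{tr}}$ directly on the transcendental summands (which are not themselves motives of K3 surfaces). The paper avoids this by applying Conjecture~\ref{conj:MGTT} literally to obtain \emph{some} isomorphism $\h(S_j)\simeq\h(S_k)$ of Chow motives, and then using the unconditional weight argument of \S\ref{subsec:weight} to extract from it an isomorphism $\Gamma_{\mathrm{tr}}\colon\h^2_{\mathrm{tr}}(S_j)\stackrel{\sim}{\longrightarrow}\h^2_{\mathrm{tr}}(S_k)$. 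With that substitution your argument goes through verbatim.
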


	\begin{proof}
		We consider the infinite family constructed in Theorem
		\ref{P:bothisonotisogenous}. For any $j\neq k$, $S_{j}$ and $S_{k}$ are not
		isogenous. On the other hand, Conjecture \ref{conj:MGTT} implies that the Chow
		motives of $S_{j}$ and $S_{k}$ are isomorphic\,; in particular, by the same
		weight argument
		as in \S \ref{subsec:weight}, there exists an isomorphism between their
		transcendental motives\,: $$\Gamma_{\operatorname{tr}}:
		\h^{2}_{\operatorname{tr}}(S_{j})\simeq \h^{2}_{\operatorname{tr}}(S_{k}).$$
		As
		$\operatorname{NS}(S_{j})_{\Q}$ and $\operatorname{NS}(S_{k})_{\Q}$ are
		isometric by construction, there is an isomorphism between the algebraic part
		of
		their weight-2 motives $\Gamma^{2}_{\operatorname{alg}}: 
		\h^{2}_{\operatorname{alg}}(S_{j})\to\h^{2}_{\operatorname{alg}}(S_{k})$ which
		induces the isometry between the N\'eron--Severi spaces. Combining them
		together, $\Gamma:=o_{S_{j}}\times
		S_{k}+\Gamma_{\operatorname{alg}}^{2}+\Gamma_{\operatorname{tr}}+S_{j}\times
		o_{S_{k}}$ yields an isomorphism between their Chow motives\,:
		$$\Gamma: \h(S_{j})=\h^{0}(S_{j})\oplus
		\h^{2}_{\operatorname{alg}}(S_{j})\oplus
		\h^{2}_{\operatorname{tr}}(S_{j})\oplus
		\h^{4}(S_{j})\stackrel{\sim}{\longrightarrow} \h(S_{k})=\h^{0}(S_{k})\oplus
		\h^{2}_{\operatorname{alg}}(S_{k})\oplus
		\h^{2}_{\operatorname{tr}}(S_{k})\oplus
		\h^{4}(S_{k}),$$
		with the extra property that  it induces an isometry between the
		$\Q$-quadratic spaces $\CH^{1}(S_{j})$ and $\CH^{1}(S_{k})$. Now as in Remark
		\ref{rmk:ChowRingIso}, according to the Beauville--Voisin theorem \cite{bv},
		the
		image of the intersection product $\CH^{1}(S_{j})\otimes
		\CH^{1}(S_{j})\to\CH^{2}(S_{j})$ is 1-dimensional and similarly for~$S_{k}$.
		This implies that $\Gamma$ induces an isomorphism  of Chow rings with
		$\Gamma_*o_{S_j} = o_{S_k}$.
	\end{proof}

	\begin{rmk}
		In Proposition~\ref{prop:IsoChowRing3}, if one assumes
		Conjecture~\ref{conj:MGTT2} for K3 surfaces instead of
		Conjecture~\ref{conj:MGTT}, then the K3 surfaces of the family have isomorphic
		Chow motives as algebra objects.
	\end{rmk}

	\subsection{An elementary proof of Theorem~\ref{thm:non-isogenousK3}} \label{SS:2}
	We provide an alternate proof of Theorem~\ref{thm:non-isogenousK3} which is elementary in the sense that it does not rely on Nikulin's deep lattice-theoretic result. \medskip
	
	Given a K3 surface $S$ of Picard rank 20, the subspace $\HH^{2,0}(S)\oplus
	\HH^{0,2}(S) \subset
	\HH^2(S,\Q)\otimes \C$ is defined over $\Q$, and consequently, up to
	multiplying
	by a real scalar, we can fix a
	generator $\sigma$ of the 1-dimensional space $\HH^{2,0}(S)$ so that
	$\sigma+\bar\sigma$ is rational, i.e., lies in $\HH^{2}(S, \Q)$. 
	
	Let $(-,-)$ denote the intersection pairing and set $v:=(\sigma, \bar\sigma) >
	0$. Since $(\sigma+\bar\sigma, \sigma+\bar\sigma)=2(\sigma, \bar\sigma)=2v$, we
	see that $v\in \Q_{>0}$. Since the real element $i(\sigma-\bar\sigma)$ spans
	the
	orthogonal complement of the rational line spanned by $\sigma + \bar{\sigma}$,
	the real line spanned by $i(\sigma-\bar\sigma)$ is also defined over $\Q$.
	Since
	$i(\sigma-\bar\sigma)$ has norm
	$(i(\sigma-\bar\sigma), i(\sigma-\bar\sigma))=2(\sigma, \bar\sigma)=2v$, there
	exists a rational number $\alpha>0$ such that
	$i\sqrt{\alpha}(\sigma-\bar\sigma)\in \HH^{2}(S, \Q)$. 
	
	Let $\{D_{1}, \ldots, D_{\rho}\}$ be an orthogonal basis of the N\'eron--Severi
	space $\operatorname{NS}(S)_{\Q}$. Write $(D_{j},D_j)=2d_{j}$ with $d_{j}\in
	\Q$. We then have the following $\Q$-basis of $\HH^{2}(S, \Q)$\,:
	$$\{\sigma+\bar\sigma, i\sqrt{\alpha}(\sigma-\bar\sigma), D_{1}, \ldots,
	D_{\rho}\}.$$
	
	Let $V$ be the lattice $\HH^{2}(S, \Z)$ equipped with the intersection pairing.
	We aim, via the (surjective) period map, to construct new K3 surfaces by
	constructing new Hodge structures on the lattice $V$.
	To this end, define a $\Q$-linear map $\phi$ from the transcendental part
	$\HH^{2}_{\operatorname{tr}}(S, \Q)$ to $V_{\Q}$ by the following formula
	\begin{equation} \label{eqn:formula}
	\begin{cases}
	\phi(\sigma)=a\sigma+\bar b\bar\sigma+\sum_{j=1}^{\rho}c_{j}D_{j}\\
	\phi(\bar\sigma)=b\sigma+\bar a\bar\sigma+\sum_{j=1}^{\rho}\bar c_{j}D_{j}
	\end{cases}
	\end{equation}
	with $a, b, c_{j}\in \C$.
	The condition that the image of $\phi$ lies in
	$V_{\Q}$ is equivalent to saying that 
	\begin{equation}\label{eq:Qalpha}
	a, b, c_{j}\in \Q(\sqrt{-\alpha}),
	\end{equation}
	where $\Q(\sqrt{-\alpha})$ is the imaginary quadratic extension of $\Q$ with
	basis 1 and $i\sqrt{\alpha}$.
	
	Consider the following subspaces of $V_{\C}$\,:
	$$V^{2,0}:=\C\cdot\phi(\sigma), \quad V^{0,2}:=\C\cdot \phi(\bar\sigma), \quad
	V^{1,1}:=(V^{0,2}+V^{2,0})^{\perp}.$$
	By the global Torelli theorem \cite{PSS}, they define a Hodge structure of a K3
	surface $S'$ if and only if 
	\begin{equation*}
	(\phi(\sigma), \phi(\sigma))=0 \text{ and } (\phi(\sigma), \phi(\bar\sigma))>0,
	\end{equation*}
	or equivalently, the following numerical conditions are satisfied\,: 
	\footnote{\label{fn:ab} We observe that these conditions imply that $a$ and $b$
		cannot be both zero. Indeed, by the Hodge index theorem, one of the $d_i$'s is
		positive, say $d_1$, while all the others are negative. The triangular
		inequality applied to $c_1^2 d_1 = \sum_{j=2}^{\rho}c_{j}^{2}(-d_{j})$ yields
		$|c_{1}|^{2}d_{1} \leq \sum_{j=2}^{\rho}|c_{j}|^{2}(-d_{j})$, i.e.,
		$\sum_{j=1}^{\rho}|c_{j}|^{2}d_{j}\leq 0$. }
	\begin{equation}\label{eqn:constraints}
	\begin{cases}
	va\bar{b}+\sum_{j=1}^{\rho}c_{j}^{2}d_{j}=0\\
	v(|a|^{2}+|b|^{2})+2\sum_{j=1}^{\rho}|c_{j}|^{2}d_{j}>0.
	\end{cases}
	\end{equation}
	By construction, there is an isomorphism between the $\Q$-Hodge structures of
	the two K3 surfaces $S$ and $S'$\,:
	$$\widetilde \phi: \HH^{2}(S, \Q)\stackrel{\sim}{\longrightarrow} \HH^{2}(S',
	\Q),$$
	where $\widetilde\phi$ is given on $\HH^{2}_{\operatorname{tr}}(S, \Q)$ by
	$\phi$ and on $\operatorname{NS}(S)_{\Q}$ by any isomorphism to
	$\operatorname{NS}(S')_{\Q}$. 
	
	Note that there is \emph{a priori} no reason for $\widetilde \phi$ to be an
	isometry. Our goal is actually to provide examples where no Hodge isometry
	exists. If there exists a Hodge isometry 
	$$\psi: \HH^{2}(S, \Q)\to \HH^{2}(S', \Q),$$
	then as $\HH^{2,0}$ is 1-dimensional, there is a $\lambda\in \C^{\times}$, such
	that 
	\begin{equation*}
	\psi(\sigma)=\frac{1}{\lambda}\phi(\sigma)=\frac{1}{\lambda}\left(a\sigma+\bar
	b\bar\sigma+\sum_{j=1}^{\rho}c_{j}D_{j}\right).
	\end{equation*}
	Since $\psi$ respects the $\Q$-structures, we find that $\frac{a}{\lambda},
	\frac{b}{\lambda}, \frac{c_{j}}{\lambda} \in \Q(\sqrt{-\alpha})$, hence from
	\eqref{eq:Qalpha} that $\lambda\in \Q(\sqrt{-\alpha})^\times$.
	The condition that $\psi$ is an isometry implies in particular that
	\begin{equation*}
	(\psi(\sigma), \psi(\bar\sigma))=(\sigma, \bar\sigma),
	\end{equation*}
	that is, 
	\begin{equation*}
	|\lambda|^{2}=|a|^{2}+|b|^{2}+2\sum_{j=1}^{\rho}|c_{j}|^{2}\frac{d_{j}}{v}.
	\end{equation*}
	
	To summarize, any solution $a, b, c_{j}\in \Q(\sqrt{-\alpha})$ of
	\eqref{eqn:constraints} gives rise to a K3 surface $S'$ with $\HH^{2}(S, \Q)$
	isomorphic to $\HH^{2}(S, \Q)$ as $\Q$-Hodge structures and with $\HH^{2}(S,
	\Q)\otimes \langle m \rangle$
	isometric to $\HH^{2}(S', \Q)$ with $m := (\phi(\sigma),\phi(\bar{\sigma})) /
	v$, but it would be
	isogenous to $S$ only if
	\begin{equation}\label{eqn:Isometry}
	|a|^{2}+|b|^{2}+2\sum_{j=1}^{\rho}|c_{j}|^{2}\frac{d_{j}}{v} \in
	N\Q(\sqrt{-\alpha})^\times,
	\end{equation}
	where  $N\Q(\sqrt{-\alpha})^\times=\left\{|z|^{2} \mid z\in
	\Q(\sqrt{-\alpha})^\times\right\}=\left\{x^{2}+\alpha y^{2}\mid (x, y) \neq
	(0,0) \in \Q^2\right\}$ is the norm group of the extension
	$\Q(\sqrt{-\alpha})/\Q$.
	By the same argument, for two such K3 surfaces $S'$ and $S''$, corresponding to
	solutions $(a, b, c_{j})$ and $(a', b', c_{j}')$ of \eqref{eqn:constraints}, if
	they are isogenous, then 
	\begin{equation}\label{eqn:Isometry2}
	\frac{|a|^{2}+|b|^{2}+2\sum_{j=1}^{\rho}|c_{j}|^{2}d_{j}/v
	}{|a'|^{2}+|b'|^{2}+2\sum_{j=1}^{\rho}|c'_{j}|^{2}d_{j}/v }\in
	N\Q(\sqrt{-\alpha})^\times.
	\end{equation}

	\begin{proof}[Alternate proof of Theorem~\ref{thm:non-isogenousK3}]
		By the
		previous discussion (together with Lemma~\ref{lem:algiso}), it is enough to
		provide infinitely many solutions of
		\eqref{eqn:constraints} such that \eqref{eqn:Isometry} does not hold for each
		of
		them and such that \eqref{eqn:Isometry2} does not hold for any two of them.
		
		Let $S=\left(T_{0}^{4}+\cdots+T_{3}^{4}=0\right)\subset \mathbb{P}^{3}$ be the
		Fermat quartic surface. We know (see \cite[Appendix~A]{SD})
		 that $S$ is the
		Kummer surface associated to an abelian surface $A$, which has a degree-two
		isogeny from $E\times E$, where $E\simeq \C/\Z\oplus \Z\cdot i$ is the
		elliptic
		curve with $j$-invariant 1728.
		
		\begin{equation*}
		\xymatrix{
			&&& & S\ar[d]\\
			E\times E\ar@{-->}[urrrr]^{\pi}\ar[rrr]_{\text{degree-2 isogeny}}&& &
			A\ar[r]&
			A/\{\pm 1\}
		}
		\end{equation*}
		
		Consider the basis $\{e_{1}:=1, e_{2}:=i\}$ of $\HH_{1}(E, \Z)$ and let
		$\{e_{1}^{*}, e_{2}^{*}\}$ be the dual basis of $\HH^{1}(E, \Z)$. Denote by
		$z$
		the holomorphic coordinate of $\C$, so that $\HH^{1,0}(E)$ is generated by
		$dz$.
		We see that 
		\begin{equation}\label{eqn:FormInBasis}
		\begin{cases}
		dz=e_{1}^{*}+i e_{2}^{*},\\
		d\bar{z}=e_{1}^{*}-i e_{2}^{*},
		\end{cases}
		\end{equation}
		and that $\int_{E}dz\wedge d\bar{z}=-2i$.
		
		Let $\sigma$ be the generator of $\HH^{2,0}(S)$ such that
		$\pi^{*}(\sigma)=dz_{1}\wedge dz_{2}$ in $\HH^{2,0}(E\times E)$. Using
		\eqref{eqn:FormInBasis}, one checks readily that $\sigma+\bar\sigma$ and
		$i\left(\sigma-\bar\sigma\right)$ belong to the rational lattice $\HH^{2}(S,
		\Q)$ because
		\begin{equation*}
		\begin{cases}
		\pi^{*}(\sigma+\bar\sigma)=dz_{1}\wedge dz_{2}+d\bar{z}_{1}\wedge
		d\bar{z}_{2}=2e_{1,1}^{*}\wedge e_{2,1}^{*}-2e_{1,2}^{*}\wedge e_{2,2}^{*};\\
		\pi^{*}\left(i\sigma-i\bar\sigma\right)=idz_{1}\wedge
		dz_{2}-id\bar{z}_{1}\wedge
		d\bar{z}_{2}=-2e_{1,1}^{*}\wedge e_{2,2}^{*}-2e_{1,2}^{*}\wedge e_{2,1}^{*}.
		\end{cases}
		\end{equation*}
		Hence $\{\sigma+\bar\sigma, i\left(\sigma-\bar\sigma\right)\}$ is a $\Q$-basis
		of $\HH^{2}_{\operatorname{tr}}(S, \Q)$; one can therefore take $\alpha=1$,
		and
		$\Q(\sqrt{-\alpha})$ is simply $\Q(i)$.
		
		On the other hand, $$v=(\sigma,
		\bar\sigma):=\int_{S}\sigma\wedge\bar\sigma=\frac{1}{\deg(\pi)}\int_{E\times
			E}dz_{1}\wedge dz_{2}\wedge d\bar{z}_{1}\wedge
		d\bar{z}_{2}=-\frac{1}{\deg(\pi)}\left(\int_{E}dz\wedge
		d\bar{z}\right)^{2}=1.$$
		
		Now take an orthogonal $\Q$-basis $\{D_{1}, \cdots, D_{\rho}\}$  of
		$\operatorname{NS}(S)_{\Q}$ such that $d_{1}=2$, $d_{2}=-1$\,; this is
		possible
		since the N\'eron--Severi lattice of $S$ is isomorphic to $E_{8}(-1)^{\oplus
			2}\oplus U\oplus \Z(-8)^{\oplus 2}$ (see \cite{SSvL}). We will only use
		solutions of \eqref{eqn:constraints} with $c_{3}=\cdots=c_{\rho}=0$ (and
		remember that $\alpha=v=1$, $d_{1}=2$, $d_{2}=-1$)\,:
		\begin{equation*}
		\begin{cases}
		a\bar{b}+2c_{1}^{2}-c_{2}^{2}=0\\
		|a|^{2}+|b|^{2}+4|c_{1}|^{2}-2|c_{2}|^{2}>0.
		\end{cases}
		\end{equation*}
		For any integer $m>0$, we have a solution
		$$a=2m, \quad b=-\frac{1}{m}(1+i), \quad c_{1}=1, \quad c_{2}=1+i,$$
		where

		$|a|^{2}+|b|^{2}+4|c_{1}|^{2}-2|c_{2}|^{2}=\frac{2}{m^{2}}(2m^{4}+1)=|\frac{1+i}{m}|^{2}(2m^{4}+1)$.
		By the previous discussion, for our purpose, it suffices to produce an
		infinite
		sequence of positive integers $\{m_{j}\}_{j=1}^{\infty}$ such that for any
		$j$,
		$2m_{j}^{4}+1\notin N\Q(i)^\times$ and for any $j\neq k$,
		$\frac{2m_{j}^{4}+1}{2m_{k}^{4}+1}\notin N\Q(i)^\times$, where
		$N\Q(i)=\{x^{2}+y^{2} \mid (x, y)\neq (0,0) \in \Q^2\}.$
		
		We conclude the proof of Theorem \ref{thm:non-isogenousK3}
		by constructing
		such a sequence inductively by means of elementary
		arithmetics. This is the object of Lemma \ref{lem:el} below. 
	\end{proof}

	\begin{lem}\label{lem:el}
		Let $m_{1}:=1$ and for any $j$, $m_{j+1}:=\prod_{l=1}^{j}(2m_{l}^{4}+1)$. 
		Then for any $j$, $2m_{j}^{4}+1\notin N\Q(i)^\times$ and for any $j\neq k$,
		$\frac{2m_{j}^{4}+1}{2m_{k}^{4}+1}\notin N\Q(i)^\times$.
	\end{lem}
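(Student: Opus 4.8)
The plan is to reduce everything to the classical description of the norm group: a positive rational $q$ lies in $N\Q(i)^\times$ if and only if $v_p(q)$ is even for every prime $p\equiv 3 \pmod 4$ (equivalently, by the two-squares theorem, a positive integer lies in $N\Q(i)^\times$ precisely when it is a sum of two integer squares). I will use throughout that $N\Q(i)^\times$ is a subgroup of $\Q_{>0}$, so that $q\in N\Q(i)^\times$ if and only if $q^{-1}\in N\Q(i)^\times$. Write $N_l := 2m_l^4+1$, so that the recursion reads $m_{l+1}=\prod_{i=1}^{l}N_i$.

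First I would record that every $m_l$ is odd: indeed $m_1=1$ and each $N_i$ is odd, so every $m_{l+1}$, being a product of odd numbers, is odd. Consequently $m_j^4$ is odd and $N_j=2m_j^4+1\equiv 3\pmod 4$. Since a positive integer that is a sum of two squares is congruent to $0$, $1$ or $2$ modulo $4$, the number $N_j$ is not a sum of two squares, whence $N_j\notin N\Q(i)^\times$\,; this is the first assertion. Moreover $N_j\equiv 3\pmod 4$ forces the existence of a prime $p_j\equiv 3\pmod 4$ with $v_{p_j}(N_j)$ odd (otherwise all primes $\equiv 3\pmod 4$ would occur to even order in $N_j$ and $N_j$ would be a sum of two squares), and this prime is the object I will track below.

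For the second assertion, by the group property it suffices to treat the case $j<k$. The recursion gives $m_k=\prod_{i=1}^{k-1}N_i$, and since $1\le j\le k-1$ the factor $N_j$ divides $m_k$\,; in particular $p_j\mid m_k$, so $N_k=2m_k^4+1\equiv 1\pmod{p_j}$ and therefore $p_j\nmid N_k$, i.e. $v_{p_j}(N_k)=0$. Hence $v_{p_j}\!\big(N_j/N_k\big)=v_{p_j}(N_j)$ is odd, and by the characterization of the norm group $N_j/N_k\notin N\Q(i)^\times$. The case $j>k$ then follows by passing to the inverse.

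The arithmetic here is entirely elementary once the norm-group characterization is in hand, so I do not anticipate a serious obstacle\,; the only point requiring care is the bookkeeping in the recursion that guarantees $N_j\mid m_k$ for $j<k$, together with the observation that any prime dividing $m_k$ cannot divide $N_k=2m_k^4+1$. The whole argument hinges on choosing, for each $j$, a single ``bad'' prime $p_j\equiv 3\pmod 4$ occurring to odd order in $N_j$ and showing that it is invisible in every later $N_k$.
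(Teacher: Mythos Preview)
Your proof is correct and follows essentially the same route as the paper: both arguments observe that each $m_j$ is odd so $N_j:=2m_j^4+1\equiv 3\pmod 4$ is not a sum of two squares, and then use the recursion to see that $N_j\mid m_k$ (hence $N_k\equiv 1\pmod{N_j}$) for $j<k$, so a prime $p\equiv 3\pmod 4$ with odd valuation in $N_j$ cannot divide $N_k$. The only cosmetic difference is that the paper phrases the second step as ``$N_j$ and $N_k$ are coprime, hence their product has a bad prime to odd order'', while you track the single bad prime $p_j$ directly; the content is the same.
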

	\begin{proof}
		A standard argument of infinite descent shows that a positive integer $n$ is
		not the
		sum
		of squares of two rational numbers, \emph{i.e.}\,not in $N\Q(i)^\times$, if
		and
		only if it is not the sum of squares of two integers. By the theorem of sum of
		two squares, the latter is equivalent to the condition that any prime divisor
		of
		$n$ with odd adic valuation is not congruent to 3 modulo 4.\\
		As the $m_{j}$'s are all odd, $2m_{j}^{4}+1\equiv 3 \mod 4$, hence is not the
		sum of two squares.  On the other hand, by construction, for any $j\neq k$,
		$2m_{j}^{4}+1$ and $2m_{k}^{4}+1$ are coprime to each other, therefore their
		product admits a prime divisor $p$ congruent to 3 modulo 4 with odd adic
		valuation. Hence $\frac{2m_{j}^{4}+1}{2m_{k}^{4}+1}\notin N\Q(i)^\times$.
	\end{proof}

	\begin{rmk}
		The reason for requiring that $c_1$ and $c_2$ are not zero in the proof of
		Theorem~\ref{thm:non-isogenousK3} is the following. Let $\alpha \in \Q_{>0}$,
		and consider a solution to \eqref{eqn:constraints} with $c_2 = \cdots = c_\rho
		=
		0$, that is, a solution to 
		\begin{equation*}\label{eqn:constraints3}
		\begin{cases}
		va\bar{b}+ c^{2}d=0\\
		v(|a|^{2}+|b|^{2})+2|c|^{2}d>0,
		\end{cases}
		\end{equation*}
		with $a,b,c \in \Q(\sqrt{-\alpha})$, $v\in \Q_{>0}$ and $d\in \Z$.
		We observe that $	va\bar{b}+ c^{2}d=0$ implies that $|a||b| = \mp
		|c|^2\frac{d}{v}$ is a non-negative rational number, where the sign depends on
		the sign of $d$. 
		Recall from footnote~\ref{fn:ab} that $a$ and $b$ cannot be both zero\,; if
		one
		of them is zero, we may assume without loss of generality that it is $b$.
		Since $|a|^2$ is a rational number, this implies that $|b| = t |a|$ for some
		rational number $t \in \Q_{\geq 0}$. It is then immediate to check that 
		$$|a|^{2}+|b|^{2}+2|c|^{2}\frac{d}{v} = |a|^{2}+|b|^{2} \pm 2|a||b| =
		|a|^2(1\pm t)^2.$$
		In other words, the equation $	va\bar{b}+ c^{2}d=0$ (with $a$ and $b$ not both
		zero) forces the positivity of $|a|^{2}+|b|^{2}+2|c|^{2}d/v$, but also implies
		that it belongs to $N\Q(\sqrt{-\alpha})^\times$. Therefore, the new K3 surface
		$S'$ obtained, via the global Torelli theorem, by imposing $c_2 = \cdots =
		c_\rho = 0$ in \eqref{eqn:formula} must be isogenous to $S$.
	\end{rmk}

\end{document}